\documentclass[11pt, reqno]{amsart}
\usepackage{latexsym}

\setlength{\textwidth}{150mm}

\usepackage{amsmath}
\usepackage{color}

\definecolor{blu}{rgb}{0,0,0.1}

\makeatletter
\def\itemize{
  \ifnum\@itemdepth>3\@toodeep\else
    \advance\@itemdepth\@ne
    \edef\@itemitem{labelitem\romannumeral\the\@itemdepth}%
        \list{\csname\@itemitem\endcsname}%
      {\leftmargin=20pt\def\makelabel##1{\hss\llap{##1}}}
        \fi}

\renewenvironment{enumerate}{%
  \ifnum \@enumdepth >3 \@toodeep\else
      \advance\@enumdepth \@ne
      \edef\@enumctr{enum\romannumeral\the\@enumdepth}\list
      {\csname label\@enumctr\endcsname}{\usecounter
        {\@enumctr}\leftmargin=20pt\def\makelabel##1{\hss\llap{\upshape##1}}}\fi
}{%
  \endlist
}

\makeatletter
\def\@settitle{\begin{center}%
  \baselineskip14\p@\relax
    \bfseries\@title
  \end{center}%
}
\def\@setauthors{%
  \begingroup
  \def\thanks{\protect\thanks@warning}%
  \trivlist
  \centering\footnotesize \@topsep30\p@\relax
  \advance\@topsep by -\baselineskip
  \item\relax
  \author@andify\authors
  \def\\{\protect\linebreak}%
  \authors%
  \ifx\@empty\contribs
  \else
    ,\penalty-3 \space \@setcontribs
    \@closetoccontribs
  \fi
  \endtrivlist
  \endgroup
}
\def\@setthanks{\def\thanks##1{\par##1}\thankses}
\makeatother

   \makeatletter
   \def\LaTeX{\leavevmode L\raise.42ex
       \hbox{\kern-.3em\size{\sf@size}{0pt}\selectfont A}\kern-.15em\TeX}
   \makeatother
   
   \newcommand{\BibTeX}{{\rm B\kern-.05em{\sc
             i\kern-.025emb}\kern-.08em\TeX}}
\def\bbm[#1]{\mbox{\boldmath $#1$}}

   \makeatletter

   \makeatother

   \newcommand{\R}{{\mathbb{R}}}

    \newcommand{\sphe}{\mathbb{S}}
\newcommand{\cal}{\mathcal }  
   
   \newcommand{\N}{\mathbb{N}}
   \newcommand{\beq}{\begin{equation}}
   \newcommand{\eeq}{\end{equation}}
 
 \newcommand{\Ree}{\mathrm{Re}}

 \renewcommand{\theequation}{\arabic{section}.\arabic{equation}}
 \renewcommand{\theequation}{\thesection.\arabic{equation}}
   \newtheorem{theorem}{Theorem}[section]
   
   \newtheorem{proposition}[theorem]{Proposition}
   \newtheorem{lemma}[theorem]{Lemma}
      \newtheorem{example}[theorem]{Example}
   
   \newtheorem{remark}[theorem]{Remark}
   \newcommand{\bremark}{\begin{remark} \em}
   \newcommand{\eremark}{\end{remark} }

\def\bbm[#1]{\mbox{\boldmath $#1$}}
 \oddsidemargin=0.3in

 \evensidemargin=0.3in
\baselineskip=16pt \addtolength{\textwidth}{1cm}

\addtolength{\oddsidemargin}{-0.3cm}
\addtolength{\textheight}{0.8cm}
\addtolength{\topmargin}{-0.5cm}
\addtolength{\evensidemargin}{-0.3cm}
\def\bbm[#1]{\mbox{\boldmath $#1$}}

\begin{document}

\title[Equilibria of point-vortices on closed surfaces]
{\LARGE Equilibria of point-vortices on closed surfaces}

\author[T. D'Aprile \& P. Esposito]{\Large Teresa D'Aprile \and Pierpaolo Esposito}
\address{Teresa D'Aprile, Dipartimento di Matematica, Universit\`a di Roma ``Tor
Vergata", via della Ricerca Scientifica 1, 00133 Roma, Italy.}
\email{daprile@mat.uniroma2.it}

\address{Pierpaolo Esposito, Dipartimento di Matematica e Fisica, Universit\`a degli Studi ``Roma Tre", Largo S. Leonardo Murialdo 1, 00146 Roma, Italy.}
\email{esposito@mat.uniroma3.it }
\begin{abstract} We discuss the existence of equilibrium configurations for the Hamiltonian point-vortex model on a closed surface $\Sigma$. The topological properties of $\Sigma$ determine the occurrence of three distinct situations, corresponding to $\mathbb{S}^2$, to $\mathbb{RP}^2$ and to $\Sigma \not=\mathbb{S}^2,\mathbb{RP}^2$. As a by-product, we also obtain new existence results for the singular mean-field equation with exponential nonlinearity. 

\medskip

\noindent {\bf Mathematics Subject Classification 2010:} 35Q35, 35J61, 35J20, 76B47

\noindent {\bf Keywords:} point-vortices, singular Liouville equation, max-min argument

\end{abstract}
\maketitle

\section{introduction} Let $\Sigma$ be a closed surface (i.e. compact and without boundary) endowed with a metric tensor $g$. We are concerned with equilibrium configurations of the Hamilton function 
$$
\mathcal{H}_0(\bbm[\xi])= \sum_i \Gamma_i^2 H(\xi_i,\xi_i)+\sum_{i \neq j} \Gamma_i \Gamma_j G(\xi_i,\xi_j)
$$
for $\bbm[\xi]=(\xi_1,\dots,\xi_{N_0}) \in \Sigma^{N_0} \cap \{ \xi_i \not= \xi_j \hbox{ for }i\not=j \}$, where $G(x,p)$ is the Green's function of $-\Delta_g$ over $\Sigma$ with singularity at $p$ and $H(x,p)$ is its regular part.

\medskip In an inviscid and incompressible fluid, the velocity field and the pressure obey the Euler equations. For a two-dimensional turbulent flow, the point-vortex ansatz $\omega=\sum_{i=1}^{N_0} \Gamma_i \delta_{\xi_i(t)}$ for the (scalar) vorticity function $\omega$ leads to the following Hamiltonian system:
\begin{equation} \label{Hamilton}
\Gamma_i \, \partial_t  \xi_i=J \, \nabla_{\xi_i} \mathcal{H}_0(\bbm[\xi]) \qquad \forall\, i=1,\dots,N_0,
\end{equation}
where $J$ denotes the symplectic matrix
$$J=\left(\begin{array}{ll} 0&1 \\ -1 & 0 \end{array} \right).$$
The quantity $\Gamma_i \in \mathbb{R} \setminus \{0 \}$ is the strength of the point-vortex $\xi_i$, whose sign determines the clockwise/counterclockwise rotation of the fluid near $\xi_i$. Based on ideas of Helmoltz (\cite{Hel}), \eqref{Hamilton} has been derived by Kirchhoff (\cite{Kir}) in $\mathbb{R}^2$. Extended by Routh (\cite{Rou}) to a bounded domain in terms of the so-called hydrodynamic Green function (see also \cite{Lin1,Lin2}), the renormalized kinetic energy $\mathcal{H}_0$ is referred to as the Kirchhoff-Routh path function.  The interested reader can look at \cite{ANSTV,New} for the case of a surface (like spheres, cylinders or tori), and refer to \cite{FlGu,MaBe,MaPu,New,Saf} for a modern treatment of the topic.

\medskip Apart from $\mathbb{R}^2$ and the case of special domains (like discs, half-discs, annuli, strips), very few is known concerning the existence of equilibrium configurations for $\mathcal{H}_0$. On a closed surface, notice that $\mathcal{H}_0$ has always a minimum point when the point-vortices have the same orientation (say, $\Gamma_i \geq 0$ for all $i=1,\dots, N_0$). The presence of counter rotating vortices makes the problem very difficult. On a bounded domain, when $N_0\leq 4$ point-vortices of alternating orientations have been considered in \cite{BPW} with $\Gamma_i=(-1)^i$ and in \cite{BaPi} for the general case (see also \cite{BrPi} for $N_0=2$). The assumption on $N_0$ prevents the collision of some $\xi_i$'s with opposite orientations, the simplest case being given by three point-vortices with $\Gamma_i=1$ collapsing onto one with $\Gamma_i=-1$ (see \cite{EsWe} in a PDE context).

\medskip In this paper we address the case where all the point-vortices with negative orientation are kept fixed. Denoting them by $p_1,\dots,p_\ell$ with strengths $-\frac{\alpha_1}{2}, \dots, -\frac{\alpha_\ell}{2}$, we are led to study
\beq\label{psi0}
\mathcal{H}(\bbm[\xi])= \sum_{j,k=1 \atop j \neq k}^N \Gamma_j \Gamma_k G(\xi_j,\xi_k)-\sum_{i=1}^\ell \alpha_i \sum_{j=1}^N \Gamma_j G(\xi_j,p_i)+\sum_{j=1}^N h(\xi_j)
\eeq
for $\bbm[\xi]=(\xi_1,\dots,\xi_N) \in \mathcal M$, where $N=N_0-\ell$, $\alpha_i,\, \Gamma_i>0$, $h \in C^1(\Sigma,\mathbb{R})$ and
$$\mathcal M=(\Sigma \setminus \{p_1,\dots,p_\ell\})^N \setminus \Delta, \quad \Delta=\{ \bbm[\xi] \in \Sigma^N:\ \xi_j=\xi_k \hbox{ for some }j\not= k\}.$$
Inspired by some arguments in \cite{BDM,BJMR}, the main aim of our paper is to investigate the interaction of the topology of $\Sigma$ with the presence of singular sources $p_1,\dots,p_\ell$ toward the existence of equilibria for $\mathcal H$. As we will see below, the three cases $\Sigma =\mathbb{S}^2$, $\Sigma=\mathbb{RP}^2$ and $\Sigma \not=\mathbb{S}^2,\mathbb{RP}^2$ exhibit completely different phenomena.

\medskip The critical point of $\mathcal{H}$ will be found at the max-min energy level  
$${ \mathcal H}^*=\sup_{\gamma\in{\mathcal F}}\min_{\hbox{\scriptsize$\bbm[\xi]$}\in
K}\mathcal H(\gamma(\bbm[\xi])),$$
where $\mathcal F$ collects a suitable family of  deformation maps from $K$ into an open set $\mathcal D \subset \subset \mathcal M$ that keep fixed $K_0 \subset K$ (for some compact sets $K$, $K_0$).
To prevent the collapsing for part of the $\xi_j$'s onto some $p_i$, the following compactness condition is crucial:
\begin{equation} \label{compact}
\alpha_i \notin \Bigg\{ \Bigg(\sum_{j,k \in J \atop j\not=k} \Gamma_j \Gamma_k \Bigg) \bigg(\sum_{j \in J} \Gamma_j \bigg)^{-1} :\ J \subset \{1,\dots,N\} \Bigg\} \qquad \forall \,  i=1,\dots,\ell.
\end{equation}
When $\Gamma_1=\dots=\Gamma_N=1$, notice that \eqref{compact} simply reduces to 
\beq\label{akka1}\alpha_i \neq 1,\dots, N-1 \quad \forall\: i=1,\dots, \ell.\eeq
To produce the linking structure
$${ \mathcal H}^*<\min_{\hbox{\scriptsize$\bbm[\xi]$}\in K_0} \mathcal H(\bbm[\xi]),$$ we need that a crucial intersection property is accomplished: more precisely, by applying a topological degree argument, 
 for all $\gamma \in \mathcal F$ we catch  a point $\bbm[\xi]_\gamma^* \in K$ with prescribed $\mathcal P_j(\gamma_j(\bbm[\xi]_\gamma^*))$, $j=1,\dots,N$, for suitable retraction  maps $\mathcal P_j$. When $\Sigma \not= \mathbb{S}^2, \mathbb{RP}^2$, we take $\mathcal P_j=\mathcal P$ for all $j=1,\dots,N$, $\mathcal P$ being a retraction of $\Sigma$ onto a simple closed curve $\sigma \subset \Sigma \setminus \{p_1,\dots,p_\ell\} $. Since the fibers of $\mathcal P$ are well separated, the value ${ \mathcal H}^*$ is uniformly (with respect to $K_0$) bounded from above, whereas $\min_{\hbox{\scriptsize$\bbm[\xi]$}\in K_0} \Psi(\bbm[\xi])$ can be made arbitrarily large by a suitable choice of $K_0$. Our first main result then reads as follows:
\begin{theorem}\label{mainth1} Let $\Sigma$ be a closed surface topologically different from $\mathbb{S}^2$ and $\mathbb{RP}^2$. If \eqref{compact} does hold, then $\mathcal H$ has a critical point.
\end{theorem}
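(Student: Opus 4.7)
The plan is to obtain a critical point for $\mathcal H$ by a linking/max-min scheme on the open manifold $\mathcal M$ at the level $\mathcal H^*$ introduced in the excerpt.

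\textbf{Step 1: Retraction onto a simple closed curve.} Since $\Sigma$ is a closed surface different from $\mathbb{S}^2$ and $\mathbb{RP}^2$, I would first exhibit a simple closed curve $\sigma\subset \Sigma\setminus\{p_1,\dots,p_\ell\}$ together with a continuous retraction $\mathcal P:\Sigma\to\sigma$. In the orientable case of genus $g\geq 1$ one picks a non-separating meridian (representing a primitive element of $H_1(\Sigma)$), while in the non-orientable case with $\geq 2$ cross-caps one has analogous curves. Up to isotopy, $\sigma$ avoids the $p_i$'s and the fibers $\mathcal P^{-1}(\theta)$ are uniformly separated.

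\textbf{Step 2: Max-min class.} Fix $N$ distinct points $\theta_1^*,\dots,\theta_N^*\in\sigma$ and a reference map $\gamma_0:K\to\mathcal D\subset\subset\mathcal M$, where $K$ is a compact product of closed arcs (one for each index $j$), $K_0=\partial K$, and $\mathcal D$ is an open relatively compact subset of $\mathcal M$. Then
$$
\mathcal F=\bigl\{\gamma\in C(K,\mathcal D)\,:\,\gamma|_{K_0}=\gamma_0|_{K_0}\bigr\}
$$
is the class used in the definition of $\mathcal H^*$.

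\textbf{Step 3: Intersection property.} For every $\gamma\in\mathcal F$ set
$$
T_\gamma(\bbm[\xi])=\bigl(\mathcal P(\gamma_1(\bbm[\xi])),\dots,\mathcal P(\gamma_N(\bbm[\xi]))\bigr).
$$
Since $T_\gamma=T_{\gamma_0}$ on $K_0$, and $K$ and $\gamma_0$ are chosen so that $T_{\gamma_0}$ has non-vanishing Brouwer degree at $(\theta_1^*,\dots,\theta_N^*)$, the equation $T_\gamma(\bbm[\xi])=(\theta_1^*,\dots,\theta_N^*)$ admits a solution $\bbm[\xi]_\gamma^*\in K$. The components $\gamma_j(\bbm[\xi]_\gamma^*)$ then lie in pairwise disjoint fibers of $\mathcal P$.

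\textbf{Step 4: Linking inequality.} At $\bbm[\xi]_\gamma^*$ the points $\gamma_j(\bbm[\xi]_\gamma^*)$ sit in distinct, well-separated fibers, hence remain at pairwise distance $\geq\delta>0$ and at distance $\geq\delta$ from each $p_i$. Every term in \eqref{psi0} is then uniformly controlled, giving $\mathcal H^*\leq C$ with $C$ independent of $K_0$. On the other hand, $K_0$ and $\gamma_0$ can be arranged so that on $K_0$ some pair $\xi_j,\xi_k$ collapses: since $\Gamma_j,\Gamma_k>0$ and $G(x,y)\sim -\frac{1}{2\pi}\log d_g(x,y)$, the singular term $\Gamma_j\Gamma_kG(\xi_j,\xi_k)$ drives $\mathcal H\to+\infty$, so $\min_{K_0}\mathcal H$ can be made as large as wished. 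This produces the strict linking condition $\mathcal H^*<\min_{K_0}\mathcal H$.

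\textbf{Step 5: Critical point via compactness.} A standard deformation argument inside $\mathcal D$ (the negative gradient flow of $\mathcal H$ cannot leave $K$ through $K_0$ since $\mathcal H$ is larger there than $\mathcal H^*$) yields a sequence $\bbm[\xi]^n\in\mathcal D$ with $\mathcal H(\bbm[\xi]^n)\to\mathcal H^*$ and $\nabla\mathcal H(\bbm[\xi]^n)\to 0$. The only potential loss of compactness is that a subset $J\subset\{1,\dots,N\}$ of components $\xi_j^n$ collapses onto some $p_i$ at scale $\epsilon_n\to 0^+$; a direct expansion using the logarithmic behaviour of $G$ gives
$$
\mathcal H(\bbm[\xi]^n)=-\frac{1}{2\pi}\,\log\epsilon_n\Bigl[\sum_{\substack{j,k\in J\\ j\neq k}}\Gamma_j\Gamma_k-\alpha_i\sum_{j\in J}\Gamma_j\Bigr]+O(1).
$$
Condition \eqref{compact} forbids a vanishing bracket, so $\mathcal H(\bbm[\xi]^n)\to\pm\infty$, contradicting $\mathcal H(\bbm[\xi]^n)\to\mathcal H^*$. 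Mutual collisions among the $\xi_j^n$'s are analogously ruled out by the positivity of all $\Gamma_j$'s. Up to a subsequence, $\bbm[\xi]^n$ converges in $\mathcal M$ to the desired critical point of $\mathcal H$.

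\textbf{Main obstacle.} The delicate point is the simultaneous choice of $\sigma$, $\mathcal P$, $K_0$ and $\gamma_0$ so that the intersection property of Step 3 (requiring a non-trivial Brouwer degree on $K_0$) is compatible with the linking inequality of Step 4 (requiring $\mathcal H$ to blow up on $K_0$). Balancing these two topological/geometric constraints is the heart of the proof; the existence of $\sigma$ and $\mathcal P$ is precisely where the assumption $\Sigma\neq\mathbb{S}^2,\mathbb{RP}^2$ enters, which also explains why those two exceptional surfaces must be treated separately. Once the linking geometry is set, the compactness analysis in Step 5 is technically standard but requires a careful Pohozaev-type bookkeeping to uncover the arithmetic condition \eqref{compact}.
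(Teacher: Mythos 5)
Your Steps 1--4 mirror the paper's argument quite faithfully: retraction onto a non-separating simple closed curve $\sigma$ (valid exactly because $\Sigma\neq\mathbb{S}^2,\mathbb{RP}^2$), a max-min class $\mathcal F$ of deformations pinned on $K_0$, the degree-theoretic intersection property (Theorem~\ref{thm1200} in the paper), and the linking inequality obtained by combining fiber-separation (so $\mathcal H^*\leq C$ uniformly) with the blow-up of $\mathcal H$ on $K_0$ as the collision scale $M^{-1}\to 0$ (Lemma~\ref{butta}). The paper builds $K$ as the closure of a connected component of $\{\bbm[\xi]\in\sigma^N:\min_{j\neq k}d_g(\xi_j,\xi_k)>M^{-1}\}$ with $K_0$ its relative boundary, rather than as a product of arcs; but that is an implementation detail and your version can be made to work with the same degree argument.

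The genuine gap is in Step 5. You assert that the max-min value yields a Palais–Smale sequence in $\mathcal D$, and then rule out loss of compactness via a one-scale expansion. Neither claim holds as stated. First, $\mathcal D$ is an open set with boundary $\partial\mathcal D=\{\Phi=M\}$, and the standard deformation lemma only produces a critical point of the unconstrained $\mathcal H$ if trajectories cannot be stuck on $\partial\mathcal D$; this is precisely the content of condition \eqref{mimabis}, which your argument never establishes. Second, your expansion
$$\mathcal H(\bbm[\xi]^n)=-\tfrac{1}{2\pi}\log\epsilon_n\Bigl[\textstyle\sum_{j,k\in J,\,j\neq k}\Gamma_j\Gamma_k-\alpha_i\sum_{j\in J}\Gamma_j\Bigr]+O(1)$$
implicitly assumes that the collapsing components approach $p_i$ at a \emph{single} scale $\epsilon_n$. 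In general there is a multi-scale cascade: different subsets of indices can collapse at different rates, and simultaneous mutual collisions at intermediate scales can occur, so the ``$O(1)$'' is not $O(1)$ and the leading coefficient is not simply the bracket above. The paper's Section~3 carries out exactly this delicate bookkeeping: it decomposes each cluster $Z_i$ into scale-ordered pieces $Y_1,\dots,Y_l$, tests the Euler--Lagrange relation \eqref{leone} against suitable directions (the identity \eqref{idii}, which is the Pohozaev-type ingredient you anticipated), and only then extracts a contradiction with \eqref{ops} and \eqref{compact}. Your proposal names this obstacle in the last paragraph but does not overcome it, so the proof is incomplete precisely at the point where \eqref{compact} enters quantitatively.

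Finally, your dismissal of mutual collisions away from the $p_i$'s as ``analogously ruled out by positivity of $\Gamma_j$'' is too quick: a collision term diverging to $+\infty$ can in principle be compensated by a cluster of other components diverging to $p_i$, so one needs the coupled analysis of Lemma~3.1 (parts a), b), c)) to separate the two effects. The structure of that lemma is precisely what makes the contradiction in Section~3 close.
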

\noindent 
When $\Sigma=\mathbb{RP}^2$, every map $\mathcal P_j$, $j=1,\dots, N$, can be taken instead as a retraction $\mathcal P$ of $\mathbb{RP}^2 \setminus \{p_i \}$ onto a simple closed curve $\sigma \subset \Sigma \setminus \{p_1,\dots,p_\ell\}$ for a fixed $i =1,\dots,\ell$. 
In this case the fibers of $\mathcal P$ are curves emanating from the singular source  $p_i$ and the assumption 
\begin{equation} \label{1530b} \sum_{j,k =1 \atop j \not= k}^N \Gamma_j \Gamma_k<\alpha_i \sum_{j =1}^N\Gamma_j 
\end{equation}
is required to assure that the mutual interactions between the components of $\bbm[\xi]$ are dominated by  the interplay between each component with  $p_i$, 
 which is essential  to get a uniform control from above on $\mathcal H^*$. 
So, our second main result is the following:
\begin{theorem}\label{mainth2} Let $\Sigma$ be a closed surface topologically equivalent to $\mathbb{RP}^2$. If \eqref{compact} and 
\begin{equation} \label{1530} 
\Bigg(\sum_{j,k =1 \atop j \not= k}^N \Gamma_j \Gamma_k \Bigg) \bigg( \sum_{j =1}^N \Gamma_j \bigg)^{-1} <\max\{\alpha_1,\dots, \alpha_\ell \}
\end{equation}
do hold, then $\mathcal H$ has a critical point.
\end{theorem}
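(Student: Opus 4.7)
The plan follows the max-min scheme outlined for Theorem \ref{mainth1}, tailored to the topology of $\mathbb{RP}^2$. First fix $i^*\in\{1,\dots,\ell\}$ with $\alpha_{i^*}=\max_i \alpha_i$; since $\mathbb{RP}^2\setminus\{p_{i^*}\}$ is an open M\"obius strip, it deformation retracts onto a simple closed curve $\sigma\subset\Sigma\setminus\{p_1,\dots,p_\ell\}$. Let $\mathcal{P}:\mathbb{RP}^2\setminus\{p_{i^*}\}\to\sigma$ be the retraction, take $\mathcal{P}_j\equiv\mathcal{P}$ for every $j$, and choose $N$ distinct points $q_1,\dots,q_N\in\sigma$. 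The compact pair $K_0\subset K$, the open set $\mathcal{D}\subset\subset\mathcal{M}$ and the family $\mathcal{F}$ of deformation maps $\gamma:K\to\mathcal{D}$ fixing $K_0$ are built in close analogy with the proof of Theorem \ref{mainth1}; $K_0$ is designed so that every $\bbm[\xi]\in K_0$ features at least one cluster of components pinched together and bounded away from $p_1,\dots,p_\ell$, so that the repulsive self-interactions make $\min_{K_0}\mathcal{H}$ as large as desired. A topological degree argument then furnishes, for each $\gamma\in\mathcal{F}$, a point $\bbm[\xi]_\gamma^*\in K$ with $\mathcal{P}(\gamma_j(\bbm[\xi]_\gamma^*))=q_j$ for all $j$.

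The main obstacle is to bound $\mathcal{H}(\gamma(\bbm[\xi]_\gamma^*))$ from above uniformly in $\gamma$: contrary to Theorem \ref{mainth1}, all fibers $\mathcal{P}^{-1}(q_j)$ terminate at the common point $p_{i^*}$, so a priori the components $\gamma_j(\bbm[\xi]_\gamma^*)$ may collapse there. To deal with this, let
$$
\mathcal{Q}:=\{\bbm[\xi]\in\mathcal{M}:\ \mathcal{P}(\xi_j)=q_j,\ j=1,\dots,N\}
$$
and analyze a sequence in $\mathcal{Q}$ along which a subfamily $J\subseteq\{1,\dots,N\}$ collapses at $p_{i^*}$ at a common rate $r\to 0^+$ while the remaining components and the other $p_i$'s stay uniformly apart (the general situation reduces to this after grouping components by collapse rate). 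Since distinct fibers separate linearly near $p_{i^*}$, the logarithmic singularity of $G$ gives
$$
\mathcal{H}(\bbm[\xi])=-\frac{1}{2\pi}\bigg(\sum_{j,k\in J \atop j\neq k}\Gamma_j\Gamma_k-\alpha_{i^*}\sum_{j\in J}\Gamma_j\bigg)\log r+O(1).
$$
Using $\sum_{j,k\in J,\,j\neq k}\Gamma_j\Gamma_k=(\sum_{j\in J}\Gamma_j)^2-\sum_{j\in J}\Gamma_j^2$, a direct computation shows that the ratio $f(J):=\big(\sum_{j,k\in J,\,j\neq k}\Gamma_j\Gamma_k\big)\big/\big(\sum_{j\in J}\Gamma_j\big)$ is strictly increasing under inclusion; hence its supremum over nonempty $J\subseteq\{1,\dots,N\}$ is attained at $J=\{1,\dots,N\}$ and, by \eqref{1530}, is strictly less than $\alpha_{i^*}$. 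The bracket above is therefore negative for every admissible $J$, so $\mathcal{H}(\bbm[\xi])\to-\infty$ along any such collapsing sequence. Combined with the compactness of $\mathcal{Q}$ outside a neighborhood of $p_{i^*}$, this yields $\sup_{\mathcal{Q}}\mathcal{H}<+\infty$, and hence $\mathcal{H}^*<+\infty$.

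With the linking inequality $\mathcal{H}^*<\min_{K_0}\mathcal{H}$ in hand (after enlarging $K_0$ if needed), the compactness condition \eqref{compact} prevents any min-max sequence at level $\mathcal{H}^*$ from escaping $\mathcal{M}$: a partial collapse of a subset $J$ of components onto some $p_i$ would impose $\alpha_i=f(J)$, which is ruled out by \eqref{compact}, and a collision of two components away from $p_1,\dots,p_\ell$ would push $\mathcal{H}$ to $+\infty$, incompatible with the bounded min-max value. A standard deformation argument inside $\mathcal{D}$ then produces a critical point of $\mathcal{H}$ at level $\mathcal{H}^*$. The hard part throughout is the upper bound on $\mathcal{H}^*$: the monotonicity of $J\mapsto f(J)$, coupled with the strict inequality \eqref{1530} and the choice of $\sigma$ as a retract of $\mathbb{RP}^2\setminus\{p_{i^*}\}$, is precisely what is needed to offset the loss of globally disjoint fibers.
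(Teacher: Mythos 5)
Your proposal is correct and follows essentially the same route as the paper: the same max-min scheme with the radial retraction of $\mathbb{RP}^2\setminus\{p_{i^*}\}$ (for $\alpha_{i^*}$ maximal) onto a simple closed curve, the same degree-based intersection property, the upper bound on $\mathcal H^*$ coming from the fact that the fibers meet only at $p_{i^*}$ together with \eqref{1530}, and \eqref{compact} for compactness. Your explicit observation that $J\mapsto\bigl(\sum_{j,k\in J,\,j\neq k}\Gamma_j\Gamma_k\bigr)\bigl(\sum_{j\in J}\Gamma_j\bigr)^{-1}$ is increasing under inclusion is a nice way of making precise why the full-set condition \eqref{1530} also controls subclusters collapsing at different rates, a point the paper leaves implicit in Lemma \ref{prevenar}.
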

\noindent The Euclidean case (\cite{Dap,DKM}), which has been the starting point for our investigation, has a strong analogy with $\mathbb{RP}^2$. When $\Gamma_1=\dots=\Gamma_N=1$,  \eqref{1530} becomes
$$ N-1<\max\{\alpha_1,\dots,\alpha_\ell \}.$$

The case $\Sigma=\mathbb{S}^2$ is more involved since $\mathbb{S}^2 \setminus \{ p \}$ is contractible, therefore it is essential that we remove two points from $\mathbb{S}^2$ in order to find a suitable retraction. 
So, first of all  we need to assume $\ell\geq 2$. Let us split $$\{1,\dots,N\}={\cal N}_1\cup\dots \cup{\cal N}_\ell$$ with disjoint union and set $N_i=\#{\cal N}_i\geq 0$. Then,  each $i=1,\dots,\ell$ has to be coupled with $r(i) \not=i$ and we choose  $\mathcal P_j={\cal P}_i$ for all $j\in {\cal N}_i$, $\mathcal P_i$ being a retraction of $\mathbb{S}^2 \setminus \{p_i,p_{r(i)} \}$ onto a simple closed curve $\sigma_i \subset \mathbb{S}^2 \setminus \{p_1,\dots,p_\ell\}$. The fibers of $\mathcal P_i$ are curves between $p_i$ and $p_{r(i)}$, and then part of such $N_i$ points could approach not only $p_i$ but also $p_{r(i)}$. Now, by exchanging the role of $i$ and $r$, for every $i=1,\dots,\ell$  we define the set $J_i \subset \{1,\dots,\ell\} \setminus \{i \}$,  as made up of those indices which are coupled in the above construction with $i$, and then $\{1,\dots,\ell\}$ is the disjoint union of such $J_i$, $i=1,\dots,\ell$. 
To obtain an upper bound on $\mathcal H^*$ we need to require
\begin{equation} \label{0956}
 \sum_{j,k \in \tilde{\mathcal N}_i \atop j \not= k} \Gamma_j \Gamma_k<\alpha_i \sum_{j \in \tilde{\mathcal N}_i} \Gamma_j \quad \;\forall i=1,\dots,\ell,
\end{equation}
where 
$$\tilde{\mathcal N}_i= \mathcal N_i \cup \bigcup_{r \in J_i} \mathcal N_r.$$
When $\Gamma_1=\dots=\Gamma_N=1$, notice that \eqref{0956} turns into
\beq\label{akka3} N_i+\sum_{r \in J_i} N_r-1<\alpha_i   \quad \forall\: i=1,\dots, \ell.\eeq
Our third main result reads as:
\begin{theorem}\label{mainth3} Let $\Sigma$ be a closed surface topologically equivalent to $\mathbb{S}^2$ and $\ell\geq 2$. If \eqref{compact} and \eqref{0956} do hold, then $\mathcal H$ has a critical point.
\end{theorem}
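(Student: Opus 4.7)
The plan is to carry out the max-min scheme sketched in the introduction, adapted to the sphere via the pairing $(i,r(i))$ and the index partition $\{1,\dots,N\}=\bigsqcup_{i}\mathcal{N}_i$. For each $i=1,\dots,\ell$ I fix a simple closed curve $\sigma_i\subset\mathbb{S}^2\setminus\{p_1,\dots,p_\ell\}$ separating $p_i$ from $p_{r(i)}$, together with a retraction $\mathcal{P}_i\colon\mathbb{S}^2\setminus\{p_i,p_{r(i)}\}\to\sigma_i$ whose fibers foliate $\mathbb{S}^2\setminus\{p_i,p_{r(i)}\}$ by disjoint simple arcs joining $p_i$ to $p_{r(i)}$; set $\mathcal{P}_j:=\mathcal{P}_i$ for $j\in\mathcal{N}_i$. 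Next I fix, for every $j$, a target $q_j^*\in\sigma_{i(j)}$ (with $i(j)$ the unique class containing $j$) chosen pairwise distinct inside every $\mathcal{N}_i$, so that the fibers $\eta_j:=\mathcal{P}_j^{-1}(q_j^*)$ are pairwise disjoint inside $\mathcal{M}$, meeting only at common endpoints of the form $p_i$ or $p_{r(i)}$.

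The test class depends on a small parameter $\delta>0$. I build compact sets $K_0\subset K\subset\mathcal{M}$ together with a reference map $\gamma_0\colon K\to\mathcal{D}\subset\subset\mathcal{M}$ in such a way that $\gamma_0$ places the $j$-th component on the fiber $\eta_j$ parametrically between $\delta$-neighborhoods of $p_{i(j)}$ and $p_{r(i(j))}$, and $K_0$ captures the boundary stratum where, for some index $k$, all the vortices of the enlarged group $\tilde{\mathcal{N}}_k$ simultaneously clump within a disk of radius $O(\delta)$ around $p_k$. Along such clumping, the strictly positive self-interaction term $\sum_{j\neq j'}\Gamma_j\Gamma_{j'}G(\xi_j,\xi_{j'})$ diverges to $+\infty$ faster than the attractive remainder, so $\min_{K_0}\mathcal{H}\to+\infty$ as $\delta\to 0^+$. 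The admissible family $\mathcal{F}$ consists of continuous deformations $\gamma\colon K\to\mathcal{D}$ coinciding with $\gamma_0$ on $K_0$.

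For any $\gamma\in\mathcal{F}$ I invoke the intersection argument: the auxiliary map $\Phi_\gamma(\bbm[\xi])=(\mathcal{P}_j(\gamma_j(\bbm[\xi])))_{j=1}^N\colon K\to\prod_j\sigma_{i(j)}$ inherits from $\gamma_0|_{K_0}$ a non-zero topological degree at the target $(q_j^*)_j$, and this produces $\bbm[\xi]_\gamma^*\in K$ with $\gamma_j(\bbm[\xi]_\gamma^*)\in\eta_j$ for every $j$. Since the fibers $\eta_j$ are pairwise disjoint in the interior of $\mathcal{M}$, collisions at $\gamma(\bbm[\xi]_\gamma^*)$ can develop only through simultaneous approach to some $p_k$, and only indices in $\tilde{\mathcal{N}}_k$ can participate in such a cluster. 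A standard asymptotic expansion for a clustering subset $S\subseteq\tilde{\mathcal{N}}_k$ at scale $\delta$ yields
\[
\mathcal{H}(\gamma(\bbm[\xi]_\gamma^*))=\frac{-\log\delta}{2\pi}\Bigl[\sum_{\substack{j,j'\in S\\ j\neq j'}}\Gamma_j\Gamma_{j'}-\alpha_k\sum_{j\in S}\Gamma_j\Bigr]+O(1),
\]
whose leading bracket is strictly negative for the full cluster $S=\tilde{\mathcal{N}}_k$ by \eqref{0956}, and never vanishes along any subset by \eqref{compact}. These two facts together yield the uniform bound $\mathcal{H}^*\le C$ independent of $\delta$, hence $\mathcal{H}^*<\min_{K_0}\mathcal{H}$ for $\delta$ small enough, and a standard deformation lemma then produces a critical point of $\mathcal{H}$ in $\mathcal{M}$.

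The main obstacle is the asymptotic step just described. Unlike in the $\mathbb{RP}^2$ case and the case $\Sigma\not=\mathbb{S}^2,\mathbb{RP}^2$, each retraction on $\mathbb{S}^2$ has two singular sinks, so a cluster near $p_k$ can mix indices from $\mathcal{N}_k$ with those from $\mathcal{N}_s$ for every $s\in J_k$. Consequently the partial clusters $S\subsetneq\tilde{\mathcal{N}}_k$ have to be controlled in the max-min analysis, which is delicate: it forces one to design $K_0$ and $\gamma_0$ carefully so that the intersection-forced solution $\bbm[\xi]_\gamma^*$ is driven towards the full cluster, relying crucially on the non-degeneracy condition \eqref{compact} to rule out intermediate scales.
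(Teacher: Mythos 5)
Your overall architecture (retractions of $\mathbb{S}^2\setminus\{p_i,p_{r(i)}\}$ onto circles, an intersection property forcing some configuration onto a product of fibers, and condition \eqref{0956} giving an upper bound on the max-min level) is the right one, but your construction of $K$ and $K_0$ is internally inconsistent. You place $K$ on the product of the fibers $\eta_j$ and declare $K_0$ to be the stratum where the whole group $\tilde{\mathcal N}_k$ clumps in a $\delta$-disk around $p_k$, claiming $\min_{K_0}\mathcal H\to+\infty$. But on such a cluster your own expansion gives
$$\mathcal H=\frac{-\log\delta}{2\pi}\Big[\sum_{j,j'\in\tilde{\mathcal N}_k\atop j\neq j'}\Gamma_j\Gamma_{j'}-\alpha_k\sum_{j\in\tilde{\mathcal N}_k}\Gamma_j\Big]+O(1),$$
and \eqref{0956} makes the bracket \emph{negative}, so $\mathcal H\to-\infty$ on your $K_0$: the very hypothesis you need for the upper bound on $\mathcal H^*$ destroys the lower bound on $K_0$, and the linking inequality \eqref{mima} fails. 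The paper avoids this by taking $K$ inside the product of the \emph{circles} $\sigma_1\times\dots\times\sigma_N$ (which stay away from all the $p_i$) and $K_0=\{\min_{j\neq k}d_g(\xi_j,\xi_k)=M^{-1}\}$: there the singular terms are bounded and only the positive mutual interaction blows up, which is Lemma \ref{butta}. Relatedly, your degree argument degenerates with your choice of $K$: if $\gamma_0$ already sends every component to the fiber $\mathcal P_j^{-1}(q_j^*)$, then $\mathcal P_j\circ\gamma_{0,j}\equiv q_j^*$ on all of $K$ and the degree of $(\mathcal P_j\circ\gamma_j)_j$ at the target $(q_j^*)_j$ is not well defined; the paper's Theorem \ref{thm1200} requires $\mathcal P_j\circ\gamma_{0,j}=\mathrm{id}_{\sigma_j}$ on $K\subset\sigma_1\times\dots\times\sigma_N$ with $K_0$ its relative boundary.

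Two further gaps. First, you never verify that the deformation flow stays in $\mathcal D$; the paper needs the boundary condition \eqref{mimabis}, and this -- not the control of ``partial clusters'' in the upper bound -- is where \eqref{compact} actually enters (Section 3). In the upper bound itself the paper does not need \eqref{compact} at all: the separation estimates \eqref{1848} and \eqref{2208} convert all mutual interactions into interactions with the endpoints $q_i$, so only the full-group coefficient of \eqref{0956} appears and no subset analysis is required. Second, for a general coupling $i\mapsto r(i)$ the arcs joining $p_i$ to $p_{r(i)}$ cannot be taken pairwise disjoint ``for free'': drawn in the plane after stereographic projection they may cross at interior points whenever the pairs interleave. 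The paper resolves this with the combinatorial reordering of Lemma \ref{permit} and Proposition \ref{permitprop} before constructing the maps $\Upsilon_{i,r}$; your proposal silently assumes a non-crossing configuration of fibers.
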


\medskip Hereafter, we restrict our attention to the case $\Gamma_1=\dots=\Gamma_N=1$. The corresponding $\mathcal H$ can also be seen as the reduced energy (\cite{cl1,cl2,espofigue}) for the  following singular mean-field problem
\beq\label{liouv}
-\Delta_g u=\lambda\bigg(\frac{\kappa(x)e^{u}}{\int_{\Sigma} \kappa(x) e^u\,dV_g}-\frac{1}{|\Sigma|}\bigg)-4\pi\sum_{i=1}^\ell\alpha_i\bigg(\delta_{p_i}-\frac{1}{|\Sigma|}\bigg)
\eeq
when looking for solutions blowing-up at distinct points $\xi_1,\dots,\xi_N \in \Sigma \setminus \{p_1,\dots,p_\ell\}$. Here, $\lambda$ is a parameter close to $8\pi N$, $\kappa:\Sigma\to \R$ is a smooth positive function, and $p_1,\dots ,p_\ell \in \Sigma$ are singular sources with $\alpha_i>0$. We denote by $\delta_p$  the Dirac measure supported at $p$, by $dV_g$  the area element in $(\Sigma,g)$ and by $|\Sigma|=\int_\Sigma dV_g$  the area of $\Sigma$. 

\medskip Regular mean-field equations naturally arise  in conformal geometry (\cite{CGY,CY,KW}), in statistical mechanics (\cite{CLMP1,CLMP2,ChKi,Kie}) and in the study of turbulent Euler flows (\cite{Stuart}). The singularities can model Euler flows interacting with sinks of opposite vorticities (\cite{TuYa}) or conical singularities on a surface (\cite{BDM,ChLi,Tro}), and naturally arise in connection with the Chern-Simons-Higgs model (\cite{CLW,DJLW,LiWa,NoTa,Tar,Yan}) and the Electroweak theory (\cite{BrDM,BrTa,SpYa}). To attack existence issues, one can compute the topological degree (\cite{cl1,cl2,cl3,cl4,CLW}), use a min-max variational approach (\cite{BDM,BrMa,BrTa,DJLW2,MaRu}) or perturbative arguments in the regime $\lambda \to 8\pi N$ (\cite{cl1,cl2,espofigue}, see also \cite{BaPa,DEFM,DEM,DKM,EGP,LiYa}). The topological degree is non-zero when $\Sigma \not= \mathbb{S}^2, \mathbb{RP}^2$, $\alpha_j \in \mathbb{N}$ and $\lambda \notin 8\pi \mathbb{N}$ (\cite{cl4}). For $\mathbb{S}^2$ this is still true (\cite{CLW}) when $\lambda \in (8\pi,16\pi)$ and $\ell\geq 2$, but the topological degree vanishes in several cases like:
\begin{itemize}
\item $\ell=1$ and $\lambda \in (8\pi, 8\pi(1+\alpha_1)) \cup (8\pi (2+\alpha_1),+\infty)$ (which is consistent with the necessary condition (\cite{Tar1}) for the existence:  $\lambda \leq 8\pi$ or $\lambda \geq 8\pi(1+\alpha_1)$);
\item $\ell=2$ and $\lambda \in (8\pi(1+\alpha_1),8\pi (1+\alpha_2)) \cup (8\pi(2+\alpha_1+\alpha_2),+\infty)$ if $\alpha_1\leq \alpha_2$ (in agreement with the necessary condition (\cite{BrMa}) for the existence: $\lambda<8\pi(1+\alpha_1)$ or $\lambda>8\pi(1+\alpha_2)$).
\end{itemize}
In a similar way, the topological degree can vanish when $\Sigma=\mathbb{RP}^2$. An alternative variational approach is also available, which is completely general  for $\Sigma\not= \mathbb{S}^2,\mathbb{RP}^2$ (\cite{BDM}, see also \cite{BJMR}) and requires the following restrictions (\cite{BrMa,MaRu}) when $\Sigma=\mathbb{S}^2$:
\begin{enumerate}
\item[a)] $\alpha_1,\dots,\alpha_\ell \leq 1$, $\lambda \in (8\pi,16\pi) \setminus \{8\pi(1+\alpha_1),\dots,8\pi(1+\alpha_\ell) \}$ and $\# J_\lambda\geq 2$, where
$$J_\lambda=\{i=1,\dots,\ell: \lambda<8\pi(1+\alpha_i)\};$$ 
\item[b)] $\ell\geq 2$ and $\lambda\in (0,8\pi \displaystyle \min_{i=1,\dots,\ell} (1+\alpha_i) )\setminus 8\pi \mathbb{N}$, which can be stated equivalently as $\# J_\lambda=\ell \geq 2$.
\end{enumerate}

In the special regime $\lambda \to 8\pi N$ solutions of \eqref{liouv} may possibly exhibit concentration phenomena, a property of definite physical interest since the right hand side of \eqref{liouv} represents precisely the vorticity of the Euler flow. The concentration points, if different from $p_1,\dots,p_\ell$, have to correspond to a critical point of a reduced energy having $\mathcal H$ (with $h(\xi)=H(\xi,\xi)+\frac{1}{4\pi}\log \kappa(\xi)$) as main order term. The existence of such concentrated solutions has been addressed, among other things, in \cite{cl2} for non-degenerate critical points $\bbm[\xi]=(\xi_1,\dots,\xi_N)$ of $\mathcal H$ with non-vanishing
$$\begin{aligned}
A(\bbm[\xi])
=\sum_{j=1}^N \kappa(\xi_j)e^{8\pi H(x,\xi_j)-4\pi \sum_{i=1}^\ell \alpha_i G(\xi_j,p_i)+8\pi\sum_{k\neq j}G(x,\xi_k)}\!\bigg[\Delta_g \log \kappa(\xi_j)+\!4\pi\frac{2 N\!- \!\Lambda}{|\Sigma|}\!-\!2K(\xi_j)\bigg] ,
\end{aligned}$$
where $\Lambda=\sum_{i=1}^\ell \alpha_i$ and $K$ is the Gaussian curvature of $(\Sigma, g)$. The critical points provided by Theorems \ref{mainth1}, \ref{mainth2} ,\ref{mainth3} may possibly be degenerate, but the critical value $\mathcal H^*$ is stable with respect to small $C^1$-perturbations of $\mathcal H$,  since it has been found by a max-min scheme. Thanks to the result in \cite{espofigue}, stable critical values of $\mathcal H$, under the sign assumption $A<0$  ($A>0$, respectively), give rise to a family of solutions $u_\lambda$ for \eqref{liouv} such that
$$\frac{\lambda \kappa(x)e^{u_\lambda}}{\int_{\Sigma}\kappa(x)e^{u_\lambda}dV_g}\to 8\pi \sum_{j=1}^N\delta_{\xi_j}$$
as $\lambda\to 8\pi N^-$ ($\lambda\to 8\pi N^+$, respectively)  in the measure sense, for a critical point $\bbm[\xi]$ of $\mathcal H$ with $\mathcal H(\bbm[\xi])=\mathcal H^*$. 
Consequently, as a byproduct of Theorems \ref{mainth1}, \ref{mainth2} and \ref{mainth3}, we provide solutions of multi-bubble type to equation \eqref{liouv} in the special regime $\lambda \to 8\pi N$. In many cases, we obtain the perturbative counterparts of global existence results already available in literature, obtained via degree theory or a global variational approach. However, compared to such previous results, in some situations one can still have existence when the degree of the equation vanishes even beyond the threshold on  $\lambda$ imposed by  ${\rm a)}$ and ${\rm b)}$, as we will see by explicit examples (see Remark \ref{rem} and Example \ref{exe} below). 

\medskip Setting
$$[\alpha]^-=\max\{n\,|\, n\in \mathbb{Z},\; n<\alpha\}\qquad \forall \alpha\in\R,$$
 for $\Gamma_1=\dots=\Gamma_N=1$ 
we summarize Theorems \ref{mainth1}, \ref{mainth2} and \ref{mainth3} as follows:
\begin{theorem}\label{thgammauno}
Assume that \eqref{akka1} holds for $N$. Then
$$\mathcal{H}(\bbm[\xi])= \sum_{j,k=1 \atop j \neq k}^N G(\xi_j,\xi_k)-\sum_{i=1}^\ell \alpha_i \sum_{j=1}^N G(\xi_j,p_i)+\sum_{j=1}^N h(\xi_j)$$
has a $C^1-$stable critical value $\mathcal H^*$ if 
\begin{itemize}
\item $\Sigma\neq \mathbb{S}^2, \mathbb{RP}^2$;
\item $\Sigma= \mathbb{RP}^2$ and
$$
N \leq \max\{1+[\alpha_1]^-,\dots, 1+[\alpha_\ell]^- \};
$$
\item $\Sigma= \mathbb{S}^2$, $\ell\geq 2$ and
\begin{equation}
N_i+\sum_{r \in J_i}N_r \leq 1+[\alpha_{i}]^- \qquad \forall \, i=1,\dots,\ell, \label{192}
\end{equation}
for $N_i\in \mathbb{N}\cup\{0\}$ and disjoint subsets $J_i\subset \{1,\dots, \ell\}\setminus\{i\}$, $i=1,\dots,\ell$, so that 
$$N=\sum_{i=1}^\ell N_i, \qquad \{1,\dots,\ell\}= \bigcup_{i=1}^\ell J_i.$$
\end{itemize}
\end{theorem}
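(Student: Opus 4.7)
My plan is to derive Theorem \ref{thgammauno} as a direct corollary of Theorems \ref{mainth1}, \ref{mainth2}, and \ref{mainth3} by specializing to $\Gamma_1 = \dots = \Gamma_N = 1$ and checking that the stated inequalities in the $[\alpha]^-$ notation are exactly the translations of \eqref{compact}, \eqref{1530}, and \eqref{0956} in that regime. The $C^1$-stability is then automatic from the max--min construction of $\mathcal H^*$.

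First I would reduce \eqref{compact} to \eqref{akka1}. With all $\Gamma_j = 1$, for $J \subset \{1,\dots,N\}$ one has
$$\frac{\sum_{j \neq k,\ j,k \in J} \Gamma_j \Gamma_k}{\sum_{j \in J} \Gamma_j} = \frac{|J|(|J|-1)}{|J|} = |J| - 1,$$
so the forbidden set for $\alpha_i$ is $\{-1, 0, 1, \dots, N-1\}$; intersecting with $\alpha_i > 0$ gives exactly \eqref{akka1}. For $\Sigma \not= \mathbb{S}^2, \mathbb{RP}^2$ this already matches the first bullet of Theorem \ref{thgammauno} via Theorem \ref{mainth1}. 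For $\Sigma = \mathbb{RP}^2$, condition \eqref{1530} becomes $N - 1 < \max_i \alpha_i$; by the definition $[\alpha]^- = \max\{n \in \mathbb{Z}: n < \alpha\}$, the strict inequality $N - 1 < \alpha$ is equivalent to $N - 1 \leq [\alpha]^-$, so taking the maximum over $i$ yields the second bullet. For $\Sigma = \mathbb{S}^2$, substituting $|\tilde{\mathcal N}_i| = N_i + \sum_{r \in J_i} N_r$ in \eqref{0956} gives $|\tilde{\mathcal N}_i|(|\tilde{\mathcal N}_i|-1) < \alpha_i |\tilde{\mathcal N}_i|$, which is automatic when $|\tilde{\mathcal N}_i| = 0$ and reads $N_i + \sum_{r \in J_i} N_r - 1 < \alpha_i$ otherwise; the same $[\alpha]^-$-argument then produces \eqref{192}.

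Finally, $C^1$-stability of $\mathcal H^*$ follows from the very structure of the max--min scheme sketched in the introduction: both the linking inequality $\mathcal H^* < \min_{K_0} \mathcal H$ and the intersection property secured by the topological degree are open conditions under small $C^1$-perturbations of $\mathcal H$, so the perturbed functional still attains a critical value at a comparable max--min level. Because the theorem is a pure bookkeeping consequence of the three main results, there is no genuine obstacle; the only point demanding care is the equivalence $N - 1 < \alpha \iff N \leq 1 + [\alpha]^-$, which is immediate from the definition of $[\alpha]^-$ but must be invoked uniformly in the $\mathbb{RP}^2$ and $\mathbb{S}^2$ cases.
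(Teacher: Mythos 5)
Your proof is correct and follows exactly the paper's approach: the paper states Theorem \ref{thgammauno} as a summary of Theorems \ref{mainth1}--\ref{mainth3} with $\Gamma_1=\dots=\Gamma_N=1$, and your verification that \eqref{compact}, \eqref{1530}, \eqref{0956} translate into \eqref{akka1} and the $[\alpha]^-$ bullets is precisely the intended (implicit) argument, with $C^1$-stability coming from the remark after \eqref{mimabis}. Two trivial slips that do not affect the conclusion: the forbidden set from \eqref{compact} is $\{0,1,\dots,N-1\}$ (not including $-1$, as the ratio equals $|J|-1$ for $|J|\geq 1$), and when $\tilde{\mathcal N}_i=\emptyset$ the inequality \eqref{0956} is vacuous in the energy estimate rather than ``automatic'' as a strict inequality, which the paper itself also glosses over in writing \eqref{akka3}.
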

\noindent 
\begin{remark}\label{rem} For $\lambda\to 8\pi N$, on the standard sphere $(\mathbb{S}^2, g_0)$ let us compare \eqref{192} with $\rm a)-\rm b)$.  Assumption \eqref{192} turns out  to be more general for $\ell\geq 3$  by allowing larger values of $\lambda$:  notice that $N$ may possibly overcome the value $\max_{i}(1+\alpha_i)$. Moreover, the choice $N_i=0$ for all $i \neq i_1, i_2$ and $J_{i_1}=\{i_2\}$, $J_{i_2}=\{i_1\}$ shows that $\#J_\lambda \geq 2$ implies the validity of \eqref{192}, extending $\rm a)$ to general $\alpha_i$ and $\rm b)$. Since in this case $A<0$ if $\kappa=1$ in view of $K=4\pi|\Sigma|^{-1}$, Theorem \ref{thgammauno} provides new existence results for equation \eqref{liouv} on $(\mathbb{S}^2,g_0)$ with $\kappa=1$ when $\lambda\to 8\pi N$  compared to \cite{BrMa,MaRu}.
\end{remark}
\begin{example}\label{exe} Consider equation \eqref{liouv} on the standard sphere $(\mathbb{S}^2, g_0)$ with $\kappa=1$ and let $$\ell=3,\quad \alpha_1=\alpha_2=\alpha\in (1,2),\quad \alpha_3 \geq 4.$$
According to the degree formula computed by Chen-Lin (\cite{cl4}), it can be easily checked that the degree vanishes for  $\lambda\in(8\pi (2+\alpha), 32\pi)$; moreover the existence results in \cite{BrMa,MaRu} do not work in such an interval since neither  assumption ${\rm a)}$ nor assumption ${\rm b)}$ are satisfied.  On the other hand, \eqref{192} is verified by taking $J_1=\emptyset$, $J_2=\{3\}$, $J_3=\{1,2\}$, and $N_1=N_2=2$, $N_3=0$, and it is immediate to check that $A<0$ if $\kappa=1$. Then, as a byproduct of Theorem \ref{thgammauno}, we deduce the existence of a solution to the Liouville equation \eqref{liouv} for $\lambda$ in a small left neighborhood of $32\pi$  with $N=4$ blow-up points. This example provides a new existence result in a perturbative regime  for equation \eqref{liouv} which is not covered neither by the degree theory  (\cite{cl4})  nor by variational methods (\cite{BrMa,MaRu}). 
\end{example}
Assumption \eqref{192} comes from \eqref{akka3} but is quite involved in such a general form. Finally, consider Theorem \ref{thgammauno}  restricted  to the case $\# J_{i}=1$, $i=1,\dots,\ell$, which, up to re-ordering, simply means that each $p_i$ is coupled (in the construction of $\mathcal P_i$) with $p_{i+1}$ (with the convention $p_{\ell+1}=p_1$, $\alpha_{\ell+1}=\alpha_1$ and $N_{\ell+1}=N_1$). Referred to as a consecutive coupling of the $p_i$'s, assumption \eqref{192} reduces to
\begin{equation}
N_i+N_{i+1}\leq 1+[\alpha_{i+1}]^-\qquad
 \forall\, i=1,\dots, \ell .\label{1922}
\end{equation}
From now on we will use the following notation: the quantities $a_i=1+[\alpha_i]^- $, $i=1,\dots,\ell$, correspond to a general consecutive coupling, whereas $b_1,\dots,b_\ell$ will denote the increasing ordering with $\alpha_1\leq\dots \leq \alpha_\ell$. Given $J \subset \{1,\dots,k\}$, for any $2\leq k\leq \frac{\ell}{2}$ let us define
$$s_k(J)=\sum_{j=2}^k \left[ \chi_J(j) a_{2j}+(1-\chi_J(j)) \left( a_{2j+1}+\chi_J(j-1) \min \{a_{2j-1},a_{2j}\} \right) \right],$$
where $\chi_J$ denotes the characteristic function of $J$. Set $c_1=a_1$, $g_1=a_3$, $d_1=f_1=+\infty$, and for $k\geq 2$
$$c_k=\min\{a_2+s_k(J): \: J \hbox{ s.t. } 1,k \in J\}, \quad d_k=\min\{a_3+s_k(J): \: J\hbox{ s.t. }1 \not\in J, k \in J \}$$
and
$$f_k=\min\{a_2+s_k(J): \: J\hbox{ s.t. }1 \in J,  k \not\in J\}, \quad g_k=\min\{a_3+s_k(J): \: J\hbox{ s.t. } 1 , k \notin J \}.$$
In order to determine the maximal $N=\sum_{i=1}^\ell N_i$ so that \eqref{1922} holds, in Appendix A simple but involved computations show the following:
\begin{theorem} \label{maximizingN}
For a general consecutive coupling, the maximal $N$ is given by
$$N=\min \{ c_{\frac{\ell}{2}},g_{\frac{\ell}{2}}\}$$
for $\ell$ even and
$$N=\max\Big\{ \min \{ c_{\frac{\ell-1}{2}}+\hat a_1-N_1,d_{\frac{\ell-1}{2}}+\hat a_1,f_{\frac{\ell-1}{2}},g_{\frac{\ell-1}{2}}+N_1 \}: \hat a_1 -\min\{a_1,a_\ell\}\leq  N_1 \leq  \min\{a_1,a_2\} \Big\}$$
for $\ell$ odd, where $\hat a_1=\min \{a_1,\min\{a_1,a_2\}+\min\{a_1,a_\ell\} \}$. For the increasing consecutive coupling, $N$ takes the form
\begin{equation} \label{0917}
N=\left\{ \begin{array}{ll} \displaystyle \sum_{j=0}^{\frac{\ell-2}{2}} b_{2j+1} & \ell \hbox{ even},\\
\min\Bigg\{b_1+\displaystyle \sum_{j=1}^{\frac{\ell-1}{2}} b_{2j}, \frac{1}{2} \displaystyle \sum_{j=1}^\ell b_j\Bigg\} & \ell \hbox{ odd}.\end{array}\right.
\end{equation}
\end{theorem}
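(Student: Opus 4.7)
\emph{Proof plan.} The theorem asks for the maximum of $N=\sum_{i=1}^\ell N_i$ over non-negative integers $N_i$ satisfying the cyclic inequalities $N_i+N_{i+1}\leq a_{i+1}$ for $i=1,\dots,\ell$, with the conventions $N_{\ell+1}=N_1$ and $a_{\ell+1}=a_1$. The plan is to solve this purely combinatorial optimization by a dynamic programming argument along the cycle, splitting the analysis according to the parity of $\ell$.

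For $\ell=2k$ even, I would group the variables into consecutive pairs $(N_{2j-1},N_{2j})$, $j=1,\dots,k$, each tied internally by the intra-pair constraint $N_{2j-1}+N_{2j}\leq a_{2j}$ and linked to neighbors by the inter-pair constraint $N_{2j}+N_{2j+1}\leq a_{2j+1}$. At each index $j$ one records via $\chi_J(j)\in\{0,1\}$ whether the intra-pair bound $a_{2j}$ is to be used ($j\in J$) or the inter-pair bound $a_{2j+1}$ is to be used instead ($j\notin J$). Accumulating these choices produces the upper estimate $a_2+s_k(J)$, with the ``transition penalty'' $\chi_J(j-1)\min\{a_{2j-1},a_{2j}\}$ appearing whenever $j-1\in J$ and $j\notin J$, to account for how saturating the intra-pair constraint at pair $j-1$ already fixes part of $N_{2j-1}$ and reduces the slack available on the inter-pair side. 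Cyclic closure then forces consideration of the four possible boundary configurations (whether $1$ and $k$ lie in $J$), which produce the four quantities $c_k,d_k,f_k,g_k$; a case-by-case check shows that only $c_{\ell/2}$ and $g_{\ell/2}$ can be globally realized, yielding $N=\min\{c_{\ell/2},g_{\ell/2}\}$. Sharpness is obtained by explicitly constructing $(N_i)$ that saturates exactly the constraints selected by the optimal $J$.

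For $\ell=2k+1$ odd the cycle admits no global pair decomposition, so I would fix $N_1$ and apply the even-case DP to the path $N_2,\dots,N_\ell$ of even length. The admissible range $\hat a_1-\min\{a_1,a_\ell\}\leq N_1\leq\min\{a_1,a_2\}$ is forced by the two constraints involving $N_1$ (namely $N_\ell+N_1\leq a_1$ and $N_1+N_2\leq a_2$) being simultaneously satisfiable with admissible $N_\ell$ and $N_2$. The four possible intra-/inter-pair saturations at the endpoints of the path give the four terms $c_{(\ell-1)/2},d_{(\ell-1)/2},f_{(\ell-1)/2},g_{(\ell-1)/2}$, shifted by $\hat a_1$ and $N_1$ to absorb $N_1$ and close the cycle; a max over $N_1$ of a min over these four terms then produces the stated formula.

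For the increasing consecutive coupling $a_j=b_j$ with $b_1\leq\dots\leq b_\ell$, monotonicity should collapse the minimum over $J$ to an alternating selection of the smallest-indexed $b_j$'s, giving directly $N=\sum_{j=0}^{(\ell-2)/2}b_{2j+1}$ for $\ell$ even. For $\ell$ odd, summing all the cyclic constraints and dividing by $2$ gives the additional fractional bound $N\leq\frac{1}{2}\sum_j b_j$; combined with the monotone choice $N\leq b_1+\sum_{j=1}^{(\ell-1)/2}b_{2j}$, \eqref{0917} follows by taking the minimum. I expect the main technical difficulty to be the careful bookkeeping in the even-case DP -- correctly tracking the transition penalty and the cyclic closure across all four boundary configurations -- rather than any conceptual obstacle; the odd case and the simplification for the increasing coupling should then follow essentially mechanically.
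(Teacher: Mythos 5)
Your overall strategy --- eliminating the even-indexed variables, running a sequential (dynamic-programming) maximization over the odd-indexed ones, and encoding by a set $J$ which of the two competing bounds is active at each stage so that the four boundary configurations produce $c_k,d_k,f_k,g_k$ --- is essentially the route the paper takes (via Lemma \ref{1150} and the inductive identity \eqref{1229}). The genuine gap is in the final collapse. After the DP one is left, for $\ell$ even, with
$$N=\max_{0\leq N_1\leq\min\{a_1,a_2\}}\min\Big\{c_{\frac{\ell}{2}},\,d_{\frac{\ell}{2}}+N_1,\,f_{\frac{\ell}{2}}-N_1,\,g_{\frac{\ell}{2}}\Big\},$$
and it is \emph{not} automatic that this equals $\min\{c_{\frac{\ell}{2}},g_{\frac{\ell}{2}}\}$: the inner minimum is capped by $\min\{d_{\frac{\ell}{2}}+N_1,f_{\frac{\ell}{2}}-N_1\}\leq\frac12(d_{\frac{\ell}{2}}+f_{\frac{\ell}{2}})$, and the optimal $N_1$ must also land in the admissible interval $[0,\min\{a_1,a_2\}]$. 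Your ``case-by-case check shows that only $c_{\ell/2}$ and $g_{\ell/2}$ can be globally realized'' glosses over exactly this point. What is needed --- and what the paper proves by separate inductions on $k$ in Lemma \ref{ineq} --- are the two structural inequalities $c_k+g_k\leq d_k+f_k$ (which guarantees $\min\{c_{\frac{\ell}{2}},g_{\frac{\ell}{2}}\}\leq\frac12(d_{\frac{\ell}{2}}+f_{\frac{\ell}{2}})$, so the plateau of the tent function $N_1\mapsto\min\{d+N_1,f-N_1\}$ reaches the height $\min\{c_{\frac{\ell}{2}},g_{\frac{\ell}{2}}\}$) and $\min\{c_{\frac{\ell}{2}},g_{\frac{\ell}{2}}\}\leq d_{\frac{\ell}{2}}+\min\{a_1,a_2\}$ (so that this plateau intersects the admissible range of $N_1$). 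These are nontrivial facts about the recursively defined quantities; they cannot be dispatched by inspecting boundary configurations, and without them the claimed identity may fail. The same issue recurs in the odd case, where in addition the replacement of the constraint $N_1+N_\ell\leq a_1$ by the equality $N_1+N_\ell=\hat a_1$ at optimality requires justification.

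One small point in your favour: for $\ell$ odd and the increasing ordering, your derivation of the bound $N\leq\frac12\sum_{j}b_j$ by summing all $\ell$ cyclic constraints is cleaner and more elementary than the paper's, which extracts it from the closed forms \eqref{1920}--\eqref{1921}; but this only supplies the upper bound, and attainability still rests on the machinery above.
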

\noindent When $\ell=2,3$ consecutive (increasing or not) and non-consecutive couplings lead to the  maximal $N$  given by \eqref{0917}. However, for $\ell\geq 4$ non-consecutive couplings may possibly give rise to a larger maximal $N$ than consecutive ones, which in turn may  do  better than the increasing consecutive coupling.

\medskip The paper is organized as follows. In Section 2 we set up  the abstract max-min scheme to provide a stable critical level $\mathcal H^*$ of $\mathcal H$. Here we make use  of a crucial compactness property which is established in Section 3. Finally, in Appendix A we derive the expression for the maximal $N$  given in Theorem \ref{maximizingN} along with a thorough discussion of the cases $\ell=2,3,4$.

\section{A max-min argument and the role of the topology of $\Sigma$} 
\setcounter{equation}{0}  
Let us outline the variational argument we are going to set up. First, we need to construct compact sets $K$, $K_0$ (with $K$ connected) and an open smooth set $\mathcal D$ so that
$$K_0\subset K\subset \mathcal D\subset \overline{\mathcal D} \subset \mathcal M,$$
where
$$\mathcal M=(\Sigma \setminus \{p_1,\dots,p_\ell\})^N \setminus \Delta, \quad \Delta=\{ \bbm[\xi] \in \Sigma^N:\ \xi_j=\xi_k \hbox{ for some }j\not= k\}.$$
Let 
$${\mathcal F}=\Big\{\Gamma(1,\cdot):\: \Gamma\in C([0,1]\times K, {\cal D})\hbox{ s.t. }
\Gamma(0,\cdot)=id_K,\: \Gamma(t,\cdot)\big|_{K_0}=id_ {K_0} \:\forall\, t\in [0,1] \Big\}$$ 
and
$$\mathcal{H}^*=\sup_{\gamma\in{\mathcal F}}\min_{\hbox{\scriptsize$\bbm[\xi]$}\in
K}\mathcal{H} (\gamma(\bbm[\xi])).$$  
Through a standard deformation argument, the existence of a critical point $\bbm[\xi]\in{\mathcal D}$ of $\mathcal{H}$ with $\mathcal{H}(\bbm[\xi])=\mathcal{H}^*$ is driven by a change in the topology of superlevel sets for $\mathcal{H}$ in ${\cal D}$ at height $\mathcal{H}^*$, as expressed by 
\begin{equation}\label{mima}\mathcal{H}^*<\min_{\hbox{\scriptsize$\bbm[\xi]$}\in K_0} \mathcal{H} (\bbm[\xi]) \end{equation}
(with the convention $ \min_{\hbox{\scriptsize$\bbm[\xi]$}\in K_0} \mathcal{H} (\bbm[\xi])=+\infty$ if $K_0=\emptyset$). To exclude the presence of constrained critical points of $\mathcal{H} \big|_{\partial \mathcal D}$ at level $\mathcal{H}^*$, we further require the following compactness condition:
\begin{equation}\label{mimabis}
\forall\: \bbm[\xi]\in \{\mathcal{H}=\mathcal{H}^*\} \cap \partial \mathcal D  \:\: \;\,\exists \tau \in T_{\hbox{\scriptsize$\bbm[\xi]$}}(\partial\mathcal
D) \hbox{ s.t. }\langle\tau,\nabla \mathcal{H}(\bbm[\xi])\rangle \neq 0,
\end{equation} where $T_{\hbox{\scriptsize$\bbm[\xi]$}}(\partial\mathcal
D) $ stands for the tangent space of $\partial \mathcal D $ at $\bbm[\xi]$.
Since properties \eqref{mima}-\eqref{mimabis} continue to hold also for functionals which are $C^1$-close to $\mathcal{H}$, notice that  such critical points are stable under $C^1$-small perturbations of $\mathcal{H}$.

\medskip Let us set
\begin{equation} \label{Hmeno}
\Phi(\bbm[\xi])= \sum_{j,k=1\atop j\neq k}^N \Gamma_j \Gamma_k G(\xi_j, \xi_k)
+\sum_{i=1}^\ell \alpha_i \sum_{j=1}^N \Gamma_j G (\xi_j,p_i)
+\sum_{j=1}^N h (\xi_j),
\end{equation}
 and for $M>0$ sufficiently large  define $\mathcal D$ as 
$${\mathcal D}=\big\{\bbm[\xi]\in{\cal M}:\, \Phi(\bbm[\xi]) <M \big\}.$$
Since
$$\Phi (\bbm[\xi])\to +\infty\hbox{ as }\bbm[\xi]\to \partial{\cal M} $$
in view of \eqref{0948} below, it follows that $\mathcal D$ is an open set with $\overline{\mathcal D}\subset \mathcal M$. Letting $\sigma_1,\dots,\sigma_N$ be (not necessarily distinct) simple, closed curves in $\Sigma \setminus \{p_1,\dots,p_\ell\}$ and $\bbm[\xi]^0=(\xi_1^0, \dots, \xi_N^0)\in \sigma_1 \times \dots \times \sigma_N$ be a $N-$tuple of distinct points, 
 introduce the  sets $K$ and $K_0$ as follows: 
\begin{eqnarray*}
&& W \hbox{ connected component of }\left\{\bbm[\xi]\in \sigma_1\times \dots \times \sigma_N :\:  \min_{j \not= k} d_g(\xi_j,\xi_k)>M^{-1}   \right\} \hbox{ s.t. }\bbm[\xi]^0 \in W\\
&&K=\overline{W},\qquad K_0=\Big\{\bbm[\xi]\in K\,\Big|\, \min_{j\neq k}d_g(\xi_j,\xi_k)=M^{-1}\Big\}
\end{eqnarray*}
for $M>( \min_{j\neq k}d_g(\xi_j^0,\xi_k^0))^{-1}$. By construction $K$ and $K_0$ are compact sets, $K$ is connected and $K_0\subset K$. 
Since $\sigma_j$, $j=1,\dots,N$, is a curve in $\Sigma \setminus \{p_1,\dots,p_\ell\}$, we have that
\beq\label{iuni}\inf\{ d_g(\xi_j,p_i): \ \xi_j \in \sigma_j, \ i=1,\dots, \ell \}>0. \eeq 
Thanks to the decomposition 
\begin{equation} \label{0948}
G(x,p)=-\frac{1}{2\pi} \log d_g(x,p)+H(x,p),\quad H \in  C(\Sigma^2),
\end{equation} 
we can rewrite $\mathcal{H}$ in \eqref{psi0} and $\Phi$ in \eqref{Hmeno} as 
\begin{equation} 
\label{1048}
\left. \begin{array}{l}
\displaystyle \mathcal{H}= \Psi_+ + O(1), \qquad \Phi= \Psi_- + O(1)\\
\displaystyle \Psi_\pm(\bbm[\xi])=
- \frac1{2\pi}\sum_{j,k=1\atop j\neq k}^N \Gamma_j \Gamma_k \log d_g(\xi_j,\xi_k) \pm \sum_{i=1}^\ell \frac{\alpha_i}{2\pi}  \sum_{j=1}^N \Gamma_j \log d_g(\xi_j,p_i),
\end{array} \right.
\end{equation}
where 
$$|(\mathcal{H}- \Psi_+)(\bbm[\xi])|+|(\Phi- \Psi_-)(\bbm[\xi])|\leq C_0 \qquad \forall \, \bbm[\xi] \in \Sigma^N$$ 
for some $C_0>0$. By \eqref{iuni} and \eqref{1048} we have that $ \sup_K \Phi  \leq C \log M$ for $M$ large, with a universal $C>0$, and then the inclusion $K\subset {\cal D}$ does hold for $M$ sufficiently large, as required.

\medskip  We are concerned now with the proof of \eqref{mima}, whereas \eqref{mimabis} will be established in the next Section thanks to the validity of \eqref{compact}. We begin with the following lemma.
\begin{lemma}\label{butta} The following holds
$$\min_{\hbox{\scriptsize$\bbm[\xi]$}\in K_0}\mathcal{H}(\bbm[\xi]) \to +\infty \hbox{ as }M \to +\infty.$$
\end{lemma}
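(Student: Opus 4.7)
The plan is to exploit the decomposition \eqref{1048} to reduce the problem to a lower bound on $\Psi_+$. Writing $\mathcal{H}=\Psi_++O(1)$ uniformly on $\Sigma^N$, it suffices to show that $\min_{K_0}\Psi_+\to +\infty$ as $M\to +\infty$.

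First I would control the singular-source contribution. For every $\bbm[\xi]\in K_0\subset K\subset \sigma_1\times\dots\times\sigma_N$, the property \eqref{iuni} yields $d_g(\xi_j,p_i)\geq c_0>0$ for some $c_0$ depending only on the fixed curves $\sigma_j$ and the singular set. Combined with the trivial upper bound $d_g(\xi_j,p_i)\leq \mathrm{diam}(\Sigma)$, this gives $|\log d_g(\xi_j,p_i)|\leq C$. Hence
\[
\left|\sum_{i=1}^\ell \frac{\alpha_i}{2\pi}\sum_{j=1}^N \Gamma_j \log d_g(\xi_j,p_i)\right|\leq C_1
\]
uniformly on $K_0$, independently of $M$.

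Next I would handle the interaction term. For any pair $j\neq k$, $d_g(\xi_j,\xi_k)\leq \mathrm{diam}(\Sigma)$, so each summand in $-\frac{1}{2\pi}\sum_{j\neq k}\Gamma_j\Gamma_k\log d_g(\xi_j,\xi_k)$ is bounded below by $-\frac{\Gamma_j\Gamma_k}{2\pi}\log \mathrm{diam}(\Sigma)$, giving a uniform lower bound $-C_2$ on the full sum. By definition of $K_0$, there exists at least one pair $(j_0,k_0)$ with $d_g(\xi_{j_0},\xi_{k_0})=M^{-1}$; since the sum is over ordered pairs, this pair contributes
\[
-\frac{1}{2\pi}\cdot 2\Gamma_{j_0}\Gamma_{k_0}\log M^{-1}=\frac{\Gamma_{j_0}\Gamma_{k_0}}{\pi}\log M,
\]
while all other ordered pairs still obey the lower bound above. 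Setting $\gamma=\min_{j\neq k}\Gamma_j\Gamma_k>0$, I obtain
\[
\Psi_+(\bbm[\xi])\geq \frac{\gamma}{\pi}\log M - C_3
\]
uniformly for $\bbm[\xi]\in K_0$, and therefore $\mathcal{H}(\bbm[\xi])\geq \frac{\gamma}{\pi}\log M - C_4\to +\infty$.

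There is no genuine obstacle here: the argument is a direct consequence of the chosen definition of $K_0$ (a forced minimum-separation scale $M^{-1}$), the positivity $\Gamma_j>0$ which gives the correct sign in $-\Gamma_j\Gamma_k\log d_g$, and the separation \eqref{iuni} of the curves $\sigma_j$ from the singular points $p_i$, which keeps the $\alpha_i$-terms bounded. The only small subtlety to keep in mind is the bookkeeping of the ordered-pair convention in the sum, which produces the factor $2$ in the divergent term.
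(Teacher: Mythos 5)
Your proof is correct and uses exactly the same ingredients as the paper's argument: the decomposition \eqref{1048}, the separation estimate \eqref{iuni} to bound the $\alpha_i$-terms, the defining property of $K_0$ (a pair at distance $M^{-1}$), and the positivity of the $\Gamma_j$'s which gives the divergent logarithm the correct sign. The only difference is cosmetic: you give a direct quantitative lower bound $\mathcal H \geq \frac{\gamma}{\pi}\log M - C$ on $K_0$, while the paper phrases the same estimate as a proof by contradiction along a sequence $M_n\to\infty$; the two are equivalent.
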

\begin{proof}
Assume by contradiction the existence of sequences $\bbm[\xi]_n=(\xi_1^n,\dots,\xi_N^n) \in \sigma_1\times \dots \times \sigma_N$ and $M_n$ such that 
$$\sup_n \mathcal{H}(\bbm[\xi]_n)< +\infty,\quad \min_{j\neq k}d_g(\xi_j^n,\xi_k^n)=M_n^{-1} \to 0 \hbox{ as }n\to +\infty.$$ 
Up to a subsequence, we can find $j_0\neq k_0$ so that 
$$d_g(\xi_{j_0}^n,\xi_{k_0}^n) =M_n^{-1} \to 0\hbox{ as }n\to +\infty.$$ 
By \eqref{iuni} and \eqref{1048} we deduce that
$$\mathcal{H}(\bbm[\xi]_n) \geq \frac{1}{2\pi} \Gamma_{j_0} \Gamma_{k_0} \log M_n +O(1)
\to+\infty$$
as $n \to +\infty$, yielding  a contradiction. \end{proof}
\noindent Thanks to Lemma \ref{butta}, the validity of \eqref{mima} will follow once we have obtained  an upper bound on the max-min value $\mathcal{H}^*$ for $M$ large. 

\medskip To this aim, let $\mathcal{P}_j$, $j=1,\dots,N$, be a retraction of $\Sigma \setminus \{p_1,\dots,p_\ell\}$ onto $\sigma_j$, i.e. $\mathcal{P}_j: \Sigma \setminus \{p_1,\dots,p_\ell\} \to \sigma_j$ is a continuous map so that $\mathcal{P}_j  \big|_{\sigma_j}=\hbox{id}_{\sigma_j}$. A simple application of the topological degree yields  the following crucial intersection property:
\begin{theorem} \label{thm1200}
For all $\gamma \in \mathcal{F}$ there exists $\bbm[\xi]_\gamma^* \in K$ so that $\mathcal{P}_j(\gamma_j(\bbm[\xi]_\gamma^*))=\xi_j^0$ for all $j=1,\dots,N$.
\end{theorem}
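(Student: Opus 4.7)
The plan is to apply Brouwer degree theory in the $N$-dimensional torus $T:=\sigma_1\times\cdots\times\sigma_N$. Writing $\gamma=\Gamma(1,\cdot)$ for some $\Gamma\in C([0,1]\times K,\mathcal D)$ as in the definition of $\mathcal F$, I define
$$F:K\to T,\qquad F(\bbm[\xi]):=\big(\mathcal P_1(\gamma_1(\bbm[\xi])),\dots,\mathcal P_N(\gamma_N(\bbm[\xi]))\big).$$
This is well posed because $\gamma(\bbm[\xi])\in\mathcal D\subset\mathcal M$ has each component in $\Sigma\setminus\{p_1,\dots,p_\ell\}$, the natural domain of every retraction $\mathcal P_j$. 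The goal is to show that $\deg(F,W,\bbm[\xi]^0)=1$, which produces $\bbm[\xi]_\gamma^*\in W\subset K$ with $F(\bbm[\xi]_\gamma^*)=\bbm[\xi]^0$, i.e.\ $\mathcal P_j(\gamma_j(\bbm[\xi]_\gamma^*))=\xi_j^0$ for every $j$.

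First I would record the structural facts about $W$. The choice of $M$ gives $M^{-1}<\min_{j\ne k}d_g(\xi_j^0,\xi_k^0)$, so $\bbm[\xi]^0\in W\setminus K_0$. Since $W$ is the connected component through $\bbm[\xi]^0$ of the open subset $\{\min_{j\ne k}d_g>M^{-1}\}$ of $T$, its topological boundary in $T$ is contained in $\{\min_{j\ne k}d_g=M^{-1}\}=K_0$. Next, I introduce the homotopy $H:[0,1]\times K\to T$ by
$$H(t,\bbm[\xi]):=\big(\mathcal P_1(\Gamma_1(t,\bbm[\xi])),\dots,\mathcal P_N(\Gamma_N(t,\bbm[\xi]))\big).$$
Using $\Gamma(0,\cdot)=\mathrm{id}_K$, $\Gamma(t,\cdot)|_{K_0}=\mathrm{id}_{K_0}$ and $\mathcal P_j|_{\sigma_j}=\mathrm{id}_{\sigma_j}$, one checks that $H(0,\cdot)=\mathrm{id}_K$ and $H(t,\bbm[\xi])=\bbm[\xi]\in K_0$ for every $(t,\bbm[\xi])\in[0,1]\times K_0$. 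In particular $H(t,\cdot)$ avoids the value $\bbm[\xi]^0$ on $\partial W\subset K_0$ for every $t$, so the degree $\deg(H(t,\cdot),W,\bbm[\xi]^0)$ is defined throughout the deformation.

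By homotopy invariance of Brouwer degree on the closed orientable $N$-manifold $T$,
$$\deg(F,W,\bbm[\xi]^0)=\deg(H(1,\cdot),W,\bbm[\xi]^0)=\deg(H(0,\cdot),W,\bbm[\xi]^0)=\deg(\mathrm{id},W,\bbm[\xi]^0)=1,$$
so $F^{-1}(\bbm[\xi]^0)\cap W\ne\emptyset$, yielding the required $\bbm[\xi]_\gamma^*$. The only mildly delicate step is the verification that $\partial W\subset K_0$, which is precisely what guarantees $\bbm[\xi]^0\notin F(\partial W)$ and hence legitimizes the degree computation; everything else is a direct application of homotopy invariance on the product-of-circles target together with the fact that each $\mathcal P_j$ fixes $\sigma_j$ pointwise.
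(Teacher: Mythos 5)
Your argument is correct, and it rests on the same two pillars as the paper's proof: homotopy invariance of a degree, with the boundary condition secured by the facts that $\Gamma(t,\cdot)$ fixes $K_0$ and each $\mathcal P_j$ fixes $\sigma_j$ pointwise, together with the observation that $\partial W\subset K_0$ while $\bbm[\xi]^0\notin K_0$. The difference is purely in the technical realization of the degree. You compute a local degree $\deg(H(t,\cdot),W,\bbm[\xi]^0)$ for maps from the open set $W$ of the torus $T=\sigma_1\times\dots\times\sigma_N$ into $T$ itself; this requires the (standard, but not the textbook Euclidean) theory of local degree for maps between oriented $N$-manifolds, and in exchange it localizes the computation on $W$, so the solution you produce lies in $W\subset K$ automatically. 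The paper instead stays within the classical Brouwer degree on open bounded subsets of Euclidean space: it first extends $\Gamma$ by the identity to all of $\sigma_1\times\dots\times\sigma_N$ (continuity being guaranteed precisely because $K_0$ is the relative boundary of $K$ and $\Gamma$ is the identity there), identifies each $\sigma_j$ with $\mathbb S^1$, and then extends radially to a product of annuli $A^N\subset\mathbb R^{2N}$, computing $\deg(\tilde H(1,\cdot),A^N,\bbm[\xi]^0)=1$; the price is an a posteriori argument that the preimage point actually lies in $K$ (if it lay outside $K$ the map would be the identity there, forcing the point to equal $\bbm[\xi]^0\in K$, a contradiction). Both routes are sound; yours is shorter but leans on the manifold-valued degree, while the paper's is more elementary at the cost of the extension construction.
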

\begin{proof}
Fix $\gamma \in \mathcal{F}$, and write it as $\gamma=\Gamma(1,\cdot)$, where $\Gamma\in C([0,1]\times K, {\cal D})$ satisfies $\Gamma(0,\cdot)=id_K$ and $\Gamma(t,\cdot)\big|_{K_0} =id_{K_0}$ for all $t\in [0,1]$. Extend $\Gamma$ from $K$ to $\sigma_1 \times \dots \times \sigma_N$ as $\tilde \Gamma$:
$$\tilde \Gamma(t,\bbm[\xi])=\Gamma(t,\bbm[\xi])\;\;\hbox{ if }\bbm[\xi]\in K,\;\;\;\; \tilde \Gamma(t,\bbm[\xi])=\bbm[\xi]\;\;\hbox{ if }\bbm[\xi]\in (\sigma_1 \times \dots \times \sigma_N) \setminus K.$$
Notice that $K_0$ is the topological boundary of $K$ relative to $\sigma_1 \times \dots \times \sigma_N$, and then $\tilde\Gamma \in C([0,1]\times (\sigma_1 \times \dots \times \sigma_N), \mathcal{D})$ in view of $\Gamma(t,\bbm[\xi]) =\bbm[\xi]$ for all $t\in [0,1]$, $\bbm[\xi] \in K_0$. Writing $\tilde \Gamma$ as $\tilde \Gamma=(\tilde\Gamma_1,\dots, \tilde\Gamma_N)$, the map $H:[0,1]\times  (\sigma_1 \times \dots \times \sigma_N) \to (\sigma_1 \times \dots \times \sigma_N)$ with components $H_j(t,\bbm[\xi])= (\mathcal{P}_j \circ \tilde \Gamma_j)(t,\bbm[\xi])$, $j=1,\dots,N$, is a continuous map so that $H(0,\cdot)=id_{\sigma_1\times \dots \times \sigma_N}$.

\medskip To use a degree argument, we can identify each $\sigma_j$, $j=1,\dots,N$, with $\mathbb{S}^1$ through a suitable homeomorphism, and then regard $H$ as a map $[0,1] \times (\mathbb{S}^1)^N \to (\mathbb{S}^1)^N$ with $H(0,\cdot)=id_{(\mathbb{S}^1)^N}$. Given the annulus $A=\left\{\frac12 \leq |x| \leq 2\right\}$, extend $H$ from $(\mathbb{S}^1)^N$ to $A^N$ as $\tilde H=(\tilde H_1,\dots,\tilde H_N)$, with 
$$\tilde H_j(t,x_1,\dots,x_N)=  |x_j| H_j\bigg(t,\frac{x_1}{|x_1|}, \dots, \frac{x_N}{|x_N|}\bigg),\quad (x_1,\dots,x_N) \in A^N.$$
By construction $\tilde  H$ is a continuous map from $[0,1] \times A^N$ into $A^N$, owing to $|\tilde H_j(t,x_1,\dots,x_N)|=|x_j|$ for all $t \in [0,1]$, and $\tilde H(0,\cdot)=id_{A^N}$.
Moreover,  $\tilde H(t,\cdot)$ maps the boundary $\partial( A^N )$ into itself, and we are in the position to apply a degree argument: by homotopy invariance we have that 
$$\deg(\tilde H(1, \cdot), A^N, \bbm[\xi]^0)=\deg(\tilde H(0, \cdot), A^N, \bbm[\xi]^0)=\deg (id, A^N, \bbm[\xi]^0)=1,$$
where $\bbm[\xi]^0 \in (\mathbb{S}^1)^N$ corresponds to the original $\bbm[\xi]^0 \in \sigma_1\times \dots \times \sigma_N$ through the identifications of $\sigma_j$, $j=1,\dots,N$, with $\mathbb{S}^1$. Then, there exists $x^*=(x_1^*,\dots, x_N^*)\in A^N$ so that  $\tilde H(1, x^*)=\bbm[\xi]^0$, and consequently $x^* \in (\mathbb{S}^1)^N$ thanks to $|x_j^*|=|\tilde H_j(1,x_1^*,\dots,x_N^*)|=|\xi_j^0|=1$. Getting back, we have thus found $\bbm[\xi]^* \in \sigma_1\times \dots \times \sigma_N$ so that $H(1,\bbm[\xi]^*)=\bbm[\xi]^0$. We claim that $\bbm[\xi]^* \in K$: otherwise, if $\bbm[\xi]^* \in (\sigma_1\times \dots \times\sigma_N) \setminus K$, then $H(1,\bbm[\xi]^*)=\bbm[\xi]^*$, which would lead to $\bbm[\xi]^*=\bbm[\xi]^0 $, and this provides a contradiction with $\bbm[\xi]^0\in K$. So, $\bbm[\xi]^*\in K$ and $H_j(1,\bbm[\xi]^*)=\mathcal{P}_j(\gamma_j(\bbm[\xi]^*))=\xi_j^0$ for all $j=1,\dots,N$.
\end{proof}
\noindent Since
$$\mathcal{H}^*=\sup_{\gamma\in{\mathcal F}}\min_{\hbox{\scriptsize$\bbm[\xi]$}\in
K}\mathcal{H} (\gamma(\bbm[\xi])) \leq \sup_{\gamma\in{\mathcal F}}  \mathcal{H} (\gamma(\bbm[\xi]^*_\gamma))$$
with $\bbm[\xi]^*_\gamma$ given by Theorem \ref{thm1200}, an upper bound on $\mathcal H^*$ is then reduced to show that
\begin{equation} \label{1518}
\sup_{\gamma \in \mathcal F}  \Psi_+ (\gamma(\bbm[\xi]^*_\gamma))\leq C
\end{equation}
does hold for  $M$ large, in view of  \eqref{1048}. The topological properties of $\Sigma$ play here a crucial role to find the retractions $\mathcal{P}_j$, $j=1,\dots,N$, and to investigate the structure of its fibers in order to prove \eqref{1518}. By the topological classification of closed surfaces, we have that $\Sigma$ is homeomorphic to either the sphere $\mathbb{S}^2$ or the connected sum of tori $T$ or a connected sum of real projective planes $\mathbb{RP}^2$. In the next Subsections, we will separately discuss the case $\Sigma \neq \mathbb{S}^2,\mathbb{RP}^2$ and the case $\Sigma =\mathbb{S}^2, \mathbb{RP}^2$ (up to homeomorphic equivalence), completing the proof of  Theorems \ref{mainth1}-\ref{mainth3}.

\subsection{The case $\Sigma \not= \mathbb{S}^2,\mathbb{RP}^2$} 
By Dyck's theorem (\cite{Dyck}) $\Sigma$ is homeomorphic either to the torus $T$ or to the Klein bottle or to the connected sum $T \# \Sigma'$, for a closed surface $\Sigma'$. Recall that a torus and a Klein bottle can be represented by the fundamental square $ABA^{-1}B^{-1}$ and $ABA^{-1}B$, respectively. To fix the ideas, let $A$, $B$ be a horizontal, vertical edge, respectively, and let us also assume that the singularities lie in the interior of the square. In this case, we can construct a retraction $\mathcal P$ of the surface onto $A$ by simply projecting along vertical lines, where $A$ represents a circle not passing through the singularities, and the fibers of $\mathcal P$ are well-separated:
\begin{equation} \label{fibers}
\overline{\mathcal P^{-1}(\zeta_1)} \cap \overline{\mathcal P^{-1}(\zeta_2)}=\emptyset
\end{equation}
for all $\zeta_1,\zeta_2 \in A$, $\zeta_1\neq \zeta_2$.
For $T \# \Sigma'$ the fundamental polygon looks like $ABA^{-1}\dots$ and contains three edges of a square $Q$. Let $v$ be one of the two vertices of $Q$ which do not belong to $B$. The retraction $\mathcal P$ is the projection on $A$ inside the square $Q$ and takes constant value $v$ outside $Q$, and we can still assume that $A$ does not contain any singularities. The map $\mathcal P$ is continuous and its fibers satisfy \eqref{fibers}.

\medskip Via the homeomorphism between $\Sigma$ and one of the above models, in our hands we have a retraction map $\mathcal P$ from $\Sigma$ onto a simple, closed curve $\sigma$ in $\Sigma \setminus \{ p_1,\dots,p_\ell\}$ so that
\begin{equation} \label{fibers1}
\inf\{ d_g(\xi_1,\xi_2): \  \mathcal P(\xi_1)=\mu_1, \mathcal P(\xi_2)=\mu_2 \}>0
\end{equation}
for all $\mu_1,\mu_2 \in \sigma$, $\mu_1 \neq \mu_2$, in view of \eqref{fibers}. We take $\sigma_1=\dots=\sigma_N=\sigma$, $\mathcal P_1=\dots=\mathcal P_N=\mathcal P$ and we fix $N$ distinct points $\xi_1^0,\dots,\xi_N^0 \in \sigma$. By Theorem \ref{thm1200} we have that $\gamma_j(\bbm[\xi]^*_\gamma) \in \mathcal{P}^{-1}(\xi_j^0)$ for all $j=1,\dots,N$, and then
$$\inf \big\{ d_g(\gamma_j(\bbm[\xi]^*_\gamma),\gamma_k(\bbm[\xi]^*_\gamma)): \ j \not= k, \gamma \in \mathcal F, M\geq 2(\min_{j\neq k}d_g(\xi_j^0,\xi_k^0))^{-1} \big\}>0$$
does hold thanks to  \eqref{fibers1}, yielding  the validity of \eqref{1518}.

\subsection{The case $\Sigma=\mathbb{S}^2, \mathbb{RP}^2$} \label{subs} 
Since $\Sigma$ is a smooth surface, there exists (see for example \cite{Hirsch}) a diffeomorphism $\omega$ from $\Sigma$ to $\mathbb{S}^2$ or $\mathbb{RP}^2$. Let $\tilde p_1=\omega(p_1),\dots,\tilde p_\ell=\omega(p_\ell)$ be the corresponding singular  sources, and $\tilde g=(\omega^{-1})* g$ be the induced metric. In this way, $(\Sigma,g)$ is isometrically equivalent to $\mathbb{S}^2$ or $\mathbb{RP}^2$ endowed with the metric $\tilde g$. In Theorem \ref{mainth3} let us first consider a consecutive coupling where each $p_i$ is coupled with $p_{i+1}$ ($p_{\ell+1}:=p_1$), i.e. $J_1=\{\ell \}$ and $J_i=\{i-1\}$ for all $i=2,\dots,\ell$. The argument is already involved and contains the main ideas of the general case, which will be discussed in a sketched way right after.

\medskip   By using $\tilde p_\ell$ as north pole on $\mathbb{S}^2$,
we can construct a diffeomorphism $\Pi: \Sigma \setminus \{p_\ell\} \to \mathbb{C}$ so that $\Pi(p_i)= q_i \in \mathbb{R}$ for all $i=1,\dots,\ell-1$, with $q_i=i$,  and $\Pi \circ \omega^{-1}$ coincides with the stereographic projection through $\tilde p_\ell$ in a small neighborhood of $\tilde p_\ell$. Since 
$$ \frac{1}{C} \, g_0 \leq \tilde g \leq C \,  g_0$$
does hold in every coordinate open set $\Omega \subset \mathbb{S}^2$ for some $C=C(\Omega)>1$, where $g_0$ is the round metric on $\mathbb{S}^2$, by compactness of $\mathbb{S}^2$ we get
$$\frac{1}{C} \leq \frac{d_{g_0}(x,y)}{d_{\tilde g}(x,y)}\leq C \quad \forall \, x,y \in \mathbb{S}^2$$
for some $C>1$. Since $(\mathbb{S}^2 \setminus \{\tilde p_\ell\},g_0)$ is isometrically equivalent to $(\mathbb{C},g_1)$, $g_1=\frac{4}{(1+|z|^2)^2} dx dy$ $(z=x+iy)$, via the stereographic projection, we have that there exists $C>1$ so that
\begin{equation} \label{comparable}
\frac{1}{C} \leq \frac{d_g(x,y)}{d_{g_1}(\Pi(x),\Pi(y))} \leq C \quad \forall\, x,y \in \Sigma \setminus \{p_\ell\}.
\end{equation}
Indeed, \eqref{comparable} is true on compact subsets of $\Sigma \setminus
\{p_\ell\}$, while near $p_\ell$ it follows by the property that $\Pi \circ \omega^{-1}$ coincides with the stereographic projection through $\tilde p_\ell$ near $\tilde p_\ell$. 

\medskip  \medskip  Thanks to \eqref{comparable}, we can work directly in $\mathbb{C}$. Let us now define a continuous map $ \Upsilon_{i,r}:\mathbb{C}\setminus\{q_{i},q_{r}\}\to \sphe^1$, $i \not=r$, as follows: 
$$\Upsilon_{i,r}(z)=\left\{\begin{array}{ll}
e^{{\rm i}\arg(z-q_{i})}&\hbox{if } \Ree \, z \leq \frac{i+r}2\\ 
e^{{\rm i} (\pi-\arg(z-q_{r}))}&\hbox{if } \Ree \, z \geq \frac{i+r}2
 \end{array}\right.  $$
if $i < r\leq \ell-1$, $\Upsilon_{i,\ell}(z)=e^{{\rm i}\arg(z-q_i)}$ if $i<r=\ell$ and $\Upsilon_{i,r}=\frac{1}{\Upsilon_{r,i}}$ if $r < i$. For $\theta\in (-\frac{\pi}{2},\frac{\pi}2)$ notice that the fibers $L_{i,r}(\theta)=\Upsilon_{i,r}^{-1}( e^{{\rm i}\theta })$ represent
\begin{itemize}
\item the two vertical edges of the isosceles triangle with base $\overline{q_{i}q_{r}}$ and base angle $\theta$ 
$$L_{i,r}(\theta)= \left\{ q_{i}+\rho e^{{\rm i}\theta}\,\Big|\,0< \rho\leq \frac{r-i}{2 \cos\theta}  \right\}\cup\left\{ q_{r}-\rho e^{-{\rm i}\theta}\,\Big|\, 0< \rho\leq \frac{r-i}{2\cos\theta} \right\}$$
for $i<r\leq \ell-1$;
\item the straight line starting from $q_i$ with angle $\theta$ for $i<r=\ell$;
\item the set $L_{r,i}(-\theta)$ for $r<i$.
\end{itemize}

\medskip  Split $\{1,\dots,N\}$ as the disjoint union of $\mathcal{N}_i$, $i=1,\dots,\ell$, and define continuous maps $\mathcal{P}_j:\Sigma \setminus \{p_1,\dots,p_\ell\} \to \sigma_j$, $j=1,\dots,N$, as  
$$\mathcal{P}_j= \Pi^{-1} \left( q_i+\frac{1}{4} \Upsilon_{i,i+1} \circ \Pi \right) ,\:\: \sigma_j=\Pi^{-1}\left(q_i+\frac{1}{4} \mathbb{S}^1\right)$$
when $j \in \mathcal{N}_i$, $ i=1,\dots,\ell-1$, and 
$$\mathcal{P}_j= \Pi^{-1} \left(q_1+\ell \Upsilon_{1,\ell} \circ \Pi  \right) ,\:\: \sigma_j=\Pi^{-1}\left(q_1+\ell \mathbb{S}^1\right)$$
when $j \in \mathcal{N}_\ell$. Notice that $\mathcal{P}_j  \big|_{\sigma_j}=\hbox{id}_{\sigma_j}$ for all $j=1,\dots,N$. Let us fix $N$ distinct angles $\theta_1,\dots,\theta_N\in (0,\frac{\pi}{2})$, and let $\bbm[\xi]^0=(\xi_1^0, \dots, \xi_N^0)\in \sigma_1 \times \dots \times \sigma_N$ be a $N-$tuple of distinct points, with 
$$\xi_j^0= \left\{ \begin{array}{ll} \Pi^{-1}(q_i+\frac{1}{4} e^{{\rm i}\theta_j})& \hbox{if }j \in \mathcal{N}_i, \, i=1,\dots,\ell-1\\
\Pi^{-1}(q_1+\ell e^{-{\rm i}\theta_j})& \hbox{if }j \in \mathcal{N}_\ell. \end{array} \right.$$
Thanks to Theorem \ref{thm1200}, for all $\gamma \in \mathcal{F}$ we can find $\bbm[z]^\gamma =(z^\gamma_1,\dots,z^\gamma_N)\in \mathbb{C}^{N}$, with $z^\gamma_j= \Pi [\gamma_j (\bbm[\xi]_\gamma^*)]$ for $j=1,\dots,N$, so that 
\begin{equation*}
z^\gamma_j \in L_{i,i+1}(\theta_j),\quad \forall\, j \in \mathcal{N}_i, \, i=1,\dots,\ell
\end{equation*}
(with $\ell+1=1$). In view of \eqref{comparable},  \eqref{1518} can be re-formulated  as
\begin{equation} \label{1341}
\sup_{\gamma \in \mathcal F}  \Psi (\bbm[ z]^\gamma)\leq C,
\end{equation}
where 
\begin{equation} \label{1528}
\Psi(\bbm[z])=- \frac1{2\pi}\sum_{j,k=1\atop j\neq k}^N \Gamma_j \Gamma_k \log d_{g_1}(z_j,z_k)+\sum_{i=1}^\ell \frac{\alpha_i}{2\pi}  \sum_{j=1}^N \Gamma_j \log d_{g_1}(z_j,q_i)
\end{equation}
for $\bbm[z]=(z_1,\dots,z_N) \in \mathbb{C}^{N}$, with $q_\ell=\infty$. Next lemma establishes 
the validity of \eqref{1341}. 
\begin{lemma}\label{prevenar} 
If $\theta_1,\dots,\theta_N\in (0,\frac{\pi}{2})$ are distinct angles, then
$$\sup_{\bbm[z] \in Z} \Psi(\bbm[z])<+\infty,$$
where $\Psi$ is given by \eqref{1528} and
$$Z=\Big\{\bbm[z]=(z_1,\dots,z_N) \in \mathbb{C}^{N}: \; z_j \in L_{i,i+1}(\theta_j) \;\, \forall \, j \in \mathcal{N}_i,\, i=1,\dots,\ell \Big\}.$$
\end{lemma}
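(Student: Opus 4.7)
The plan is to argue by contradiction: I would suppose some sequence $\bbm[z]^{(n)} \in Z$ satisfies $\Psi(\bbm[z]^{(n)}) \to +\infty$ and derive an impossibility. Working on the Riemann sphere $\hat{\mathbb C} = \mathbb C \cup \{\infty\}$, so that the unbounded fibers $L_{\ell,1}(\theta_j)$ can be treated on equal footing via the inverting chart $z \mapsto 1/(z-q_1)$ that sends $q_\ell = \infty$ to $0$, I would first extract a subsequence along which each $z_j^{(n)}$ converges in $\hat{\mathbb C}$ to some $\bar z_j$. The crucial geometric input is that the $\theta_j$'s being pairwise distinct in $(0, \pi/2)$ forces any two fibers $L_{i,i+1}(\theta_j)$ and $L_{i',i'+1}(\theta_k)$ with $j \neq k$ to intersect at most at common endpoints among $\{q_1, \dots, q_\ell\}$. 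Hence, if no $\bar z_j$ coincided with any $q_m$, then $\bbm[z]^{(n)}$ would stay in a compact subset of $\mathcal M$ and $\Psi(\bbm[z]^{(n)})$ would be bounded, a contradiction.

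One is thus reduced to analyzing the blow-up at each singular point $q_m$ for which the cluster $S_m := \{j : \bar z_j = q_m\}$ is non-empty. The fiber structure of the consecutive coupling forces $S_m \subset \mathcal N_{m-1} \cup \mathcal N_m = \tilde{\mathcal N}_m$ (indices modulo $\ell$). Within $S_m$, each $z_j^{(n)}$ approaches $q_m$ along a fixed ray of angle $\psi_j \in (0,\pi)$, with $\psi_j = \theta_j$ if $j \in \mathcal N_m$ and $\psi_j = \pi - \theta_j$ if $j \in \mathcal N_{m-1}$; since the $\theta_j$'s are distinct and lie in $(0,\pi/2)$, the $\psi_j$'s are pairwise distinct in $(0,\pi)$. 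The law of cosines then yields the angular separation estimate
$$|z_j^{(n)} - z_k^{(n)}| \geq c_m \max\{\rho_j^{(n)}, \rho_k^{(n)}\}, \qquad \rho_j^{(n)} := d_{g_1}(z_j^{(n)}, q_m),$$
for all $j \neq k$ in $S_m$ and some constant $c_m > 0$.

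I would then localize $\Psi$ around each $q_m$: since $g_1$ is equivalent to the Euclidean metric near each finite $q_m$, and to its inverted image near $\infty$, $\Psi(\bbm[z]^{(n)})$ splits as a sum of local contributions at the various $q_m$'s plus a bounded remainder. Ordering $S_m = \{j_1, \dots, j_s\}$ (after a further subsequence) so that $\rho_{j_1}^{(n)} \geq \dots \geq \rho_{j_s}^{(n)}$ and applying the separation estimate, the local contribution at $q_m$ is controlled, via Abel summation, by a non-negative linear combination of the quantities
$$\alpha_m \sum_{j \in J} \Gamma_j - \sum_{\substack{j \neq k \\ j,k \in J}} \Gamma_j \Gamma_k, \qquad J = \{j_i,\dots,j_s\} \subset \tilde{\mathcal N}_m, \quad i = 1,\dots,s.$$
Condition \eqref{0956} yields strict positivity of this expression for $J = \tilde{\mathcal N}_m$, whereas \eqref{compact} rules out exact resonance for the nested subsets $J \subsetneq \tilde{\mathcal N}_m$; a sign/continuity argument across the ordering then propagates strict positivity to every such $J$, bounding each local contribution uniformly and producing the desired contradiction.

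The hard part will be this last combinatorial step. Assumption \eqref{0956} only directly bounds the aggregate ratio on the full cluster $\tilde{\mathcal N}_m$, while the blow-up analysis naturally involves nested sub-clusters of $S_m$ decaying at different rates; the non-resonance condition \eqref{compact} is exactly what prevents exact cancellation on these intermediate subsets, so the careful interplay between \eqref{compact} and \eqref{0956} is what delivers the uniform bound. This combinatorial balance between decay rates and the admissible subsets of $\tilde{\mathcal N}_m$ is the most delicate point of the argument.
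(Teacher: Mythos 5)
Your overall strategy is viable and genuinely different from the paper's: you compactify, extract the clusters $S_m$ at each singular point, and use only the ``max'' separation estimate (the analogue of \eqref{2208}) together with an ordering of the rates $\rho_j^{(n)}$ and an Abel summation, which reduces the bound to sign conditions on all nested tail subsets $J\subset S_m\subset\tilde{\mathcal N}_m$. The paper instead proves the stronger pointwise inequality \eqref{1848}, $d_{g_1}(z_j,z_k)\ge\delta\,d_{g_1}(z_j,q_i)\,d_{g_1}(z_j,q_{i+1})$, involving the \emph{product} of the distances to both endpoints of the fiber; this converts each pairwise repulsion directly into endpoint terms with no need to order the points or to consider sub-clusters, so that \eqref{0956} is invoked only once, for the full sets $\tilde{\mathcal N}_i$. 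Your route trades that sharper two-endpoint estimate for a family of subset conditions; both are in principle workable.

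The genuine gap is precisely in the step you flag as the hard part. You need
$$\sum_{j,k\in J,\ j\ne k}\Gamma_j\Gamma_k<\alpha_m\sum_{j\in J}\Gamma_j\qquad\text{for every tail subset }J\subset \tilde{\mathcal N}_m,$$
and you propose to derive it from \eqref{0956} (which gives only $J=\tilde{\mathcal N}_m$) plus \eqref{compact} via ``a sign/continuity argument across the ordering''. That is not an argument: the family of subsets is discrete, so there is nothing to be continuous in, and \eqref{compact} merely excludes equality for each fixed $J$ --- it cannot by itself determine which strict inequality holds. Fortunately the statement you need is true and elementary, and \eqref{compact} plays no role here (it is used in Section 3 for the compactness property \eqref{mimabis}, not in this lemma): the ratio $R(J)=\big(\sum_{j,k\in J,\,j\ne k}\Gamma_j\Gamma_k\big)\big(\sum_{j\in J}\Gamma_j\big)^{-1}=\sum_{j\in J}\Gamma_j-\big(\sum_{j\in J}\Gamma_j^2\big)\big(\sum_{j\in J}\Gamma_j\big)^{-1}$ is strictly increasing under inclusion when all $\Gamma_j>0$ (adjoining $\Gamma_0$ increases it by $\Gamma_0(S^2+Q)/(S(S+\Gamma_0))>0$, where $S=\sum_{j\in J}\Gamma_j$ and $Q=\sum_{j\in J}\Gamma_j^2$), hence $R(J)\le R(\tilde{\mathcal N}_m)<\alpha_m$ for every $J\subset\tilde{\mathcal N}_m$. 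With that substitution your proof closes; as written, the decisive inequality is asserted rather than proved, and the mechanism you offer for it does not work.
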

\begin{proof} 
Observe that
\begin{equation} \label{1814}
\overline{L_{i,i+1}(\theta_j)} \cap \overline{L_{r,r+1}(\theta_k)}=\left\{ \begin{array}{ll}  q_i,q_{i+1} &\hbox{if }r=i\\
q_{i+1} &\hbox{if }r=i+1 \\
\emptyset &\hbox{otherwise}\end{array} \right.
\end{equation}
for all $j \in \mathcal N_i$ and $k\in \mathcal N_r$ with $j \neq k$, where $q_{\ell+1}=q_1$ and the closure is meant with respect to $d_{g_1}$. By \eqref{1814} we can rewrite $\Psi$ as
\begin{equation*}
\Psi(\bbm[z]) =
-\frac{1}{2 \pi}\sum_{i=1}^{\ell}\sum_{j \in \mathcal{N}_i}\sum_{
k \in \mathcal{N}_{i-1} \cup \mathcal{N}_i \cup \mathcal{N}_{i+1}\atop k\neq j}
\Gamma_j \Gamma_k \log d_{g_1}(z_j,z_k)+\sum_{i=1}^\ell \frac{\alpha_{i}}{2 \pi} \sum_{j\in 	\tilde{\mathcal N}_i} \Gamma_j \log d_{g_1}(z_j,q_i)+O(1),
\end{equation*}
where $\tilde{\mathcal N}_i=\mathcal N_i \cup \mathcal N_{i-1}$ for a consecutive coupling, $\mathcal N_0=\mathcal N_\ell$, $\mathcal N_{\ell+1}=\mathcal N_1$ and $\alpha_{\ell+1}=\alpha_1$.

\medskip  For $i=1,\dots,\ell$ and $j,k \in \mathcal N_i$, $j\neq k$, the two  polygonals $L_{i,i+1}(\theta_j)$ and $L_{i,i+1}(\theta_k)$ approach the same end-points $q_{i}$ and  $q_{i+1}$ with different angles $\theta_j\neq\theta_k$, yielding  the inequality
\begin{equation} \label{1848}
d_{g_1}(z_j,z_k)\geq \delta \, d_{g_1}(z_j,q_i) \, d_{g_1}(z_j,q_{i+1})
\end{equation}
for all $z_j \in L_{i,i+1}(\theta_j)$ and $z_k \in L_{i,i+1}(\theta_k)$, where $\delta>0$ depends only on $\theta_1,\dots ,\theta_N$. Indeed,  for $i=1,\dots, \ell-2$ we can write points $z_j$, $z_k$ near $q_i$ as $z_j=q_i+|z_j-q_i|e^{{\rm i}\theta_j}$, $z_k=q_i+|z_k-q_i|e^{{\rm i}\theta_k}$ to get
$$|z_j-z_k|^2=|z_j-q_i|^2-2 |z_j-q_i||z_k-q_i| \cos |\theta_k-\theta_j|+|z_k-q_i|^2  \geq |z_j-q_i|^2 \sin^2 |\theta_k-\theta_j|,$$
and  \eqref{1848} follows near $q_i$ owing to the equivalence between $d_{g_1}$ and the euclidean distance on compact subsets of $\mathbb{R}^2$. A similar argument works near $q_{i+1}$, and \eqref{1848} is thus proved when $i=1,\dots,\ell-2$. Inequality \eqref{1848} is still valid near $q_i$ for $i=\ell-1$ and near $q_{i+1}$ for $i=\ell$, and the difficult case is when approaching $q_\ell=\infty$. For $z_j=q_{\ell-1}+|z_j-q_{\ell-1}|e^{{\rm i}\theta_j}$ and $z_k=q_{\ell-1}+|z_k-q_{\ell-1}|e^{{\rm i}\theta_k}$ the following  holds
\begin{eqnarray*}
\Big|\frac{1}{z_j}-\frac{1}{z_k}\Big|^2 &=& \frac{|z_j-z_k|^2}{|z_j|^2|z_k|^2}
=\frac{|z_j-q_{\ell-1}|^2-2 |z_j-q_{\ell-1}||z_k-q_{\ell-1}| \cos |\theta_k-\theta_j|+|z_k-q_{\ell-1}|^2}{|z_j|^2|q_{\ell-1}+|z_k-q_{\ell-1}| e^{{\rm i} \theta_k}|^2}\\
& \geq &
\frac{|z_k-q_{\ell-1}|^2 \sin^2 |\theta_k-\theta_j|}{2 |z_j|^2 |z_k-q_{\ell-1}|^2}\geq 
\delta\Big|\frac{1}{z_j}\Big|^2,
\end{eqnarray*}
for $|z_j|,|z_k|$ large,  providing the validity of \eqref{1848} with $i=\ell-1$ near $q_\ell=\infty$ in view of the invariance of $g_1$ under the map $z \to \frac{1}{z}$ and the equivalence between $d_{g_1}$ and the euclidean distance near $0$. A similar argument works for \eqref{1848} with $i=\ell$ near $q_\ell=\infty$, and \eqref{1848} is finally established for $i=1,\dots,\ell$.

\medskip  For $i=1,\dots,\ell$ and $j\in \mathcal N_{i}$, $k\in \mathcal N_{i+1}$, the two polygonals $L_{i,i+1}(\theta_j)$ and $L_{i+1,i+2}(\theta_k)$ (with $L_{\ell+1,\ell+2}(\theta_k)=L_{1,2}(\theta_k)$) just have $q_{i+1}$ as common end-point, and, arguing as above, we deduce
\begin{equation} \label{2208}
d_{g_1}(z_j,z_k)\geq \delta \max\{ d_{g_1}(z_j,q_{i+1}) \, d_{g_1}(z_k,q_{i+1})\}
\end{equation}
for all $z_j \in L_{i,i+1}(\theta_j)$ and $z_k \in L_{i+1,i+2}(\theta_k)$, where $\delta>0$ just depends on $\theta_1,\dots,\theta_N$. 

\medskip  For $i=1,\dots, \ell$ and  $j\in \mathcal N_i$, by \eqref{1848} and \eqref{2208} we deduce 
that
\begin{eqnarray*}
&&-\sum_{k\in \mathcal N_{i-1}\cup \mathcal N_{i}\cup \mathcal N_{i+1}\atop k\neq j} \Gamma_k \log d_{g_1}(z_j,z_k)\\
&&=- \sum_{
k\in \mathcal N_{i-1}} \Gamma_k  \log d_{g_1}(z_j,z_k) -\sum_{k\in \mathcal N_{i}\atop k\neq j} \Gamma_k \log d_{g_1}(z_j,z_k)- \sum_{k\in \mathcal N_{i+1}} \Gamma_k  \log d_{g_1}(z_j,z_k)
\\ 
&&\leq -\sum_{
k\in \mathcal N_{i-1}} \Gamma_k \log d_{g_1}(z_j,q_i)- \sum_{
k\in \mathcal N_{i}\atop k\neq j} \Gamma_k  \log d_{g_1}(z_j,q_i)-\sum_{
k\in \mathcal N_{i}\atop k\neq j} \Gamma_k \log d_{g_1}(z_j,q_{i+1}) \\
&&\quad - \sum_{
k\in  \mathcal N_{i+1}} \Gamma_k \log d_{g_1}(z_j,q_{i+1})+O(1)\\
&&=- \sum_{
k\in \tilde{\mathcal N}_{i} \atop k\neq j} \Gamma_k  \log d_{g_1}(z_j,q_i)
- \sum_{
k\in  \tilde{\mathcal N}_{i+1} \atop k \neq j} \Gamma_k \log d_{g_1}(z_j,q_{i+1})
+O(1).
\end{eqnarray*}
Therefore, we have shown that
\begin{eqnarray*}
\Psi(\bbm[z]) &=& 
- \frac{1}{2 \pi}\sum_{i=1}^{\ell}\sum_{j \in \mathcal{N}_i} \sum_{
k\in \tilde{\mathcal N}_{i} \atop k\neq j} \Gamma_j \Gamma_k  \log d_{g_1}(z_j,q_i)
- \frac{1}{2 \pi}\sum_{i=1}^{\ell}\sum_{j \in \mathcal{N}_i}
 \sum_{
k\in  \tilde{\mathcal N}_{i+1} \atop k \neq j} \Gamma_j  \Gamma_k \log d_{g_1}(z_j,q_{i+1})\\
&&\quad +\sum_{i=1}^\ell \frac{\alpha_{i}}{2 \pi} \sum_{j \in \tilde{\mathcal N}_{i}} \Gamma_j \log d_{g_1}(z_j,q_i)+
O(1)\\
&=& 
- \frac{1}{2 \pi}\sum_{i=1}^{\ell}\bigg[ \sum_{j,k \in \tilde{\mathcal N}_{i} \atop k\neq j} \Gamma_j \Gamma_k  - \alpha_i \sum_{j \in \tilde{\mathcal N}_{i}} \Gamma_j\bigg] \log d_{g_1}(z_j,q_i)+O(1)
\end{eqnarray*}
and the above quantity is uniformly bounded from above with respect to  $\bbm[z] \in Z$ in view of \eqref{0956}, yielding  $\sup_Z \Psi<+\infty$.
\end{proof}

Letting $\sim$ be the equivalence relation between antipodal points, the surface $\mathbb{RP}^2$ can be represented as the quotient $\mathbb{S}^2 / \hspace{-0.1cm} \sim$. 
We can find a retraction $\mathcal P_j: \mathbb{RP}^2 \setminus \{ \tilde p_i\} \to C$, $j \in \mathcal N_i$, as the projection along great circles passing through $\tilde p_i$ onto a given equatorial circle $C$ not passing through $\tilde p_1,\dots,\tilde p_\ell$. The fibers of $\mathcal P_j$, $j \in \mathcal N_i$, intersect in $\tilde p_i$, a fact which can be controlled by an assumption like \eqref{1530b}. However the fibers of $\mathcal P_j$ and $\mathcal P_k$ have intersection points outside $\tilde p_1,\dots,\tilde p_\ell$, for all $j \in \mathcal N_{i_1}$ and $k \in \mathcal N_{i_2}$ with $i_1\neq i_2$, and an upper bound on $\mathcal H^*$ is not generally available. In Theorem \ref{mainth2} we then restrict the attention to the special case $\mathcal N_i=\{1,\dots,N\}$, with $\alpha_i=\max\{\alpha_1,\dots, \alpha_\ell \}$ for some $i=1,\ldots,\ell$. Taking $\tilde p_i$ as the north pole and $C$ as the equator, we have that the upper hemisphere $\mathbb{S}^2_+$ (w.r.t. $C$) can be projected onto the equatorial plane and the unit disc $D$ with identified antipodal boundary points is a model for $\mathbb{RP}^2$. 
Then we can find  a diffeomorphism $\Pi: \Sigma \to D$ so that $\Pi(p_r)=q_r \in \mathbb{R}$, $r=1,\dots,\ell$, with $q_i=0$ and $-1\leq q_1<\dots<q_\ell\leq1$. Moreover, by compactness of $\Sigma$ the following holds 
\begin{equation} \label{comparablebis}
\frac{1}{C} \leq \frac{d_g(x,y)}{|\Pi(x)-\Pi(y)|} \leq C \quad \forall\, x,y \in \Sigma 
\end{equation}
for some $C>1$. Letting $\pi: D \setminus \{0 \} \to \partial D$ be the radial projection, we define $\mathcal{P}_j=\mathcal P$ for all $j=1,\dots,N$, where $\mathcal P=\Pi^{-1}\circ \pi \circ \Pi:\Sigma \setminus \{p_i\} \to \sigma$ and $\sigma=\Pi^{-1}(\partial D) $. Let us fix $N$ distinct points $\xi_1^0, \dots, \xi_N^0 \in \sigma \setminus \{p_1,\dots,p_\ell\}$. In view of \eqref{comparablebis}, the validity of \eqref{1518} will follow by
\begin{equation} \label{1752}
\sup_{\bbm[z] \in Z} \Psi(\bbm[z])<+\infty,\quad \Psi(\bbm[z])=- \frac1{2\pi}\sum_{j,k=1\atop j\neq k}^N \Gamma_j \Gamma_k \log |z_j-z_k|+\sum_{i=1}^\ell \frac{\alpha_i}{2\pi}  \sum_{j=1}^N \Gamma_j \log |z_j-q_i|,
\end{equation}
where 
$$Z=\Big\{\bbm[z]=(z_1,\dots,z_N) \in D^{N}: \, z_j \in \mathcal \pi^{-1}(\zeta_j^0)\;\; \forall \, j=1,\dots,N\Big\}, \quad \zeta_j^0=\Pi(\xi_j^0).$$
Since $\overline{\pi^{-1}(\zeta_j^0)} \cap \overline{\pi^{-1}(\zeta_k^0)}=\{0 \}$ for $j \neq k$, arguing as in Lemma \ref{prevenar} \eqref{1752} can be deduced by \eqref{1530}. 

\medskip For a general coupling in Theorem \ref{mainth3}, we explain below the necessary changes.
Denoting by $r(i)$  the unique index such that $i\in J_{r(i)}$, $i\in\{1,\dots,\ell\}$, we split
$$\{1,\dots,\ell\}=\bigcup_{r=1}^m X_r$$ 
in disjoint blocks $X_r$ which satisfy 
\beq\label{minimal} i\in X_r\Rightarrow J_i\subset X_r  \;\; \&\;\; r(i) \in X_r\eeq 
and are minimal (i.e. no proper subset of $X_r$ satisfies \eqref{minimal}). Notice that such a partition  $\{X_1,\dots, X_m\}$ is unique, and \eqref{minimal} guarantees that there are no couplings between indices in different blocks. Thanks to the following result, we can provide each block with a nice order of all the indices but one (say, the last one):
\begin{lemma}\label{permit}Let $X=\{x_1,\dots,x_n\} \subset \mathbb{R}$ be a set of $n\geq 2$ elements. Let $J_1,\dots,J_n$ be a partition of $X$ so that $x_i \notin J_i$, $i=1,\dots,n$, and 
\beq\label{minimalbis} X' \subset X:\,  J_i \subset X'  \;\; \&\;\; x_{r(i)} \in X' \: \: \forall \, x_i \in X'  \Longrightarrow X'=X, \eeq 
where $r(i) \in \{1,\dots,n\}$ is the unique index so that $x_i \in J_{r(i)}$. Then, there exists a permutation $\sigma$ of $\{1,\ldots,n\}$   so that\footnote{Hereafter, the notation $A<B$ ($A\leq B$ resp.) stands for $\sup A<\inf B$   ($\sup A\leq \inf B$ resp.) if $A,B\neq \emptyset$. The inequality is always true if either $A=\emptyset$ or $B=\emptyset$, and points are identified with singletons.}
\beq\label{per1} 
\sigma(i)<\sigma(j)  \Rightarrow \hbox{ either } J_{\sigma(i)}^*<x_{\sigma(i)} \leq J_{\sigma(j)}^*<x_{\sigma(j)} \hbox{ or }J_{\sigma(j)}^*<J_{\sigma(i)}^*<x_{\sigma(i)}<x_{\sigma(j)}, \eeq 
where  $J_i^*=J_i\setminus \{x_{\sigma(n)} \}$, $i=1,\dots,n$.
\end{lemma}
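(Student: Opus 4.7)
The plan is to induct on $n$, exploiting the structure of the directed graph $G$ on $X$ with edges $x_i\to x_{r(i)}$. The minimality condition \eqref{minimalbis} forces $G$ to be connected in the underlying undirected sense, and since every vertex has out-degree one, $G$ is a unicyclic functional digraph: it contains a unique directed cycle $V_1\to V_2\to\cdots\to V_m\to V_1$, with pendant trees rooted at cycle vertices. The base case $n=2$ forces $J_1=\{x_2\}$ and $J_2=\{x_1\}$, and $\sigma=\mathrm{id}$ works: with $\sigma(2)=2$ we obtain $J_1^*=\emptyset$, $J_2^*=\{x_1\}$, and the first alternative of \eqref{per1} holds via the footnote convention on vacuous inequalities.

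For the inductive step I would split into two cases according to whether $G$ has a leaf. If some $J_\ell=\emptyset$ (so $x_\ell$ receives no incoming edges), I remove $x_\ell$ and pass to the reduced configuration on $\hat X=X\setminus\{x_\ell\}$ with $\hat J_{r(\ell)}=J_{r(\ell)}\setminus\{x_\ell\}$ and $\hat J_i=J_i$ for $i\neq r(\ell)$. Minimality transfers to $\hat X$ precisely because $x_\ell$ has no incoming edges, so any nonempty $X'\subset\hat X$ closed for the reduced data is automatically closed for the original data and hence equals $X$ by \eqref{minimalbis}. Induction then supplies a permutation $\hat\sigma$ of the remaining $n-1$ indices; I set $\sigma(n)=\hat\sigma(n-1)$ so that $J_i^*$ agrees with $\hat J_i^*$ on surviving indices, and I insert $\ell$ into the $\hat\sigma$-ordering at the slot dictated by $r(\ell)$ (e.g.\ just before $r(\ell)$). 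Since $J_\ell^*\subseteq J_\ell=\emptyset$, every new constraint in \eqref{per1} involving $J_\ell^*$ is automatically vacuous, so only the pair linking $\ell$ and $r(\ell)$ requires checking.

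If instead $G$ has no leaf then $|J_i|\geq 1$ for all $i$, and $\sum_i|J_i|=n$ forces $|J_i|=1$ throughout; thus $r$ is a fixed-point-free bijection and, by connectivity, a single $n$-cycle. In this purely cyclic case I would choose $\sigma(n)=c^*$ to be the cycle vertex maximizing $x_{c^*}$, producing $J_{r(c^*)}^*=\emptyset$ and effectively breaking the cycle at the edge $c^*\to r(c^*)$ into a directed chain; the remaining $n-1$ indices are then ordered by traversing this chain in a way compatible with the real-line order. The main obstacle—and the technical heart of the proof—is the simultaneous verification of \eqref{per1} for all pairs $(a,b)$ with $a<b$: depending on whether $a$ and $b$ lie on the same arc of the broken cycle or on opposite sides of the cut, one invokes either the first alternative (nested $J^*$'s, with $x_a$ sandwiched between $J_a^*$ and $J_b^*$) or the second (interleaved $J^*$'s both lying below $x_a<x_b$). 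The correct choice hinges on a delicate interplay between the cyclic graph order and the real-line order on $X$, and this is precisely the step where the extremality of $c^*$ together with the minimality hypothesis are essential to rule out configurations that would defeat both alternatives at once.
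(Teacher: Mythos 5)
Your skeleton coincides with the paper's (induction on $n$; the functional digraph $x_i\to x_{r(i)}$; removal of a vertex with $J_\ell=\emptyset$ when one exists; the pure-cycle case otherwise), and your structural observations are correct. But there are two genuine gaps, both traceable to a misreading of \eqref{per1}. The conclusion \eqref{per1} is to be read in the linear order \emph{produced by} $\sigma$ — this is how the lemma is used in Proposition \ref{permitprop}, where ``up to a permutation'' means the indices are relabelled and the property is stated for the natural order on the new labels; the original real-line positions of the $x_i$ carry no independent constraint. Under your literal reading, both alternatives of \eqref{per1} contain $x_{\sigma(i)}<x_{\sigma(j)}$, which would force the $\sigma$-order to coincide with the increasing order of the given reals and would make the lemma false (take $x_1<x_2<x_3$ with the $3$-cycle $r(1)=3$, $r(3)=2$, $r(2)=1$: no choice of $\sigma$ then satisfies \eqref{per1}). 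Once this is corrected, your cyclic case — which you explicitly leave unproved, calling it ``the technical heart'' — is immediate: order the elements along the cycle so that $J_{\sigma(i)}=\{x_{\sigma(i-1)}\}$ for $i\geq 2$ and $J_{\sigma(1)}=\{x_{\sigma(n)}\}$; then $J_{\sigma(1)}^*=\emptyset$, every other $J_{\sigma(i)}^*$ is the singleton immediately preceding $x_{\sigma(i)}$, and the first alternative of \eqref{per1} holds for every pair. The extremal choice of $c^*$ and the ``delicate interplay with the real-line order'' are artifacts of the misreading, and no actual argument is supplied for this case.

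In the leaf case, your claim that ``$J_i^*$ agrees with $\hat J_i^*$ on surviving indices'' is false for $i=r(\ell)$: in the full problem $x_\ell\in J_{r(\ell)}^*$, whereas $x_\ell$ had been deleted from $\hat J_{r(\ell)}$ in the reduced data. Hence ``only the pair linking $\ell$ and $r(\ell)$ requires checking'' is unjustified — every pair involving $r(\ell)$ must be re-examined with the enlarged set $J_{r(\ell)}^*$. This is precisely why the insertion slot is not arbitrary: the paper places $\ell$ \emph{immediately} before $r(\ell)$ (at the position $i_0$ with $\lambda(i_0)=\tau(n)=r(\tau(n+1))$), so that the new element $x_\ell$ of $J_{r(\ell)}^*$ sits just below $x_{r(\ell)}$ and above everything that precedes $\ell$, which is what allows \eqref{per1} to be re-verified for all pairs involving $r(\ell)$. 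Your ``e.g.\ just before $r(\ell)$'' happens to be the right choice, but it is presented as one option among several and the required re-verification is omitted.
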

\begin{proof} 
We argue by induction on $n$. When $n=2$ we have that $J_1=\{x_2\}$, $J_2= \{x_1\}$ and \eqref{per1} is satisfied with $\sigma=\hbox{id}$. If Lemma \ref{permit} does hold for $n$, let us discuss its validity also for $n+1$. If $J_i\neq \emptyset $ for any $i=1,\dots,n+1$, then $\# J_i=1$ for all $i$, and by \eqref{minimalbis} we can find a permutation $\sigma $ so that $J_{\sigma(i)}=\{x_{\sigma(i-1)} \}$ for $i \geq 2$ and  $J_{\sigma(1)}=\{x_{\sigma(n)} \}$, and \eqref{per1} easily follows. Otherwise, we can find a first permutation $\tau $ so that $J_{\tau(n+1)}=\emptyset$ and $\tau(n)=r(\tau(n+1))$. Letting $Y=\{x_{\tau(1)},\dots, x_{\tau(n)} \}$, we have that $\# Y=n$ and  $Y$ still satisfies
\eqref{minimalbis} with the partition $J_{\tau(1)},\dots, J_{\tau(n-1)}, J_{\tau(n)} \setminus \{x_{\tau(n+1)} \}$. Since Lemma \ref{permit} is true for $Y$, we can find a permutation $\lambda$ of $\{1,\ldots,n\}$ so that \eqref{per1} does hold for $Y$ with $\lambda$. The permutation $\sigma $ of $\{1,\ldots,n+1\}$ constructed as $\sigma(1)= \tau \circ \lambda (1),\dots, \sigma(i_0-1)=\tau \circ \lambda  (i_0-1), \sigma(i_0)=\tau(n+1), \sigma(i_0+1)=\tau \circ \lambda  (i_0),\dots,\sigma(n+1)=\tau \circ \lambda (n)$ with $i_0$ defined by $\lambda(i_0)=\tau(n)$, satisfies  \eqref{per1}, as it can be straightforwardly checked.
\end{proof}
We first make a permutation to have $X_1<\dots<X_m$ and then apply Lemma \ref{permit} to each $X_r$, $r=1,\dots,m$, to get the following:
\begin{proposition} \label{permitprop}
Up to a permutation, there exist blocks $X_1<\dots<X_m$, $m\geq 1$, satisfying \eqref{minimal} and for all $r=1,\dots,m$:
\begin{eqnarray} \label{2313} 
 i,j \in X_r,\, i<j   \Rightarrow \hbox{ either } J_i^*<i \leq J_j^*<j \hbox{ or }J_j^*<J_i^*<i<j,
\end{eqnarray}
where  $J_i^*=J_i\setminus \{l_r\}$ and $l_r=\max X_r$.
\end{proposition}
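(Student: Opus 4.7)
The plan is to obtain Proposition \ref{permitprop} as a direct corollary of Lemma \ref{permit}, applied independently to each minimal block, after a preliminary rearrangement of the blocks themselves.

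First, since by \eqref{minimal} each $X_r$ is closed under the coupling (i.e.\ $i\in X_r$ implies $J_i\subset X_r$ and $r(i)\in X_r$), any global permutation of $\{1,\dots,\ell\}$ that sends the integers $1,\dots,|X_1|$ bijectively onto $X_1$, then $|X_1|+1,\dots,|X_1|+|X_2|$ onto $X_2$, and so on, preserves both the partition into blocks and all the coupling data. After such a permutation we may assume $X_1<X_2<\dots<X_m$ as subsets of $\{1,\dots,\ell\}$.

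Next, fix $r\in\{1,\dots,m\}$ and consider $X_r$ endowed with the induced partition $\{J_i:i\in X_r\}$. The hypotheses of Lemma \ref{permit} are satisfied: this is a genuine partition of $X_r$ since $j\in J_{r(j)}$ and $r(j)\in X_r$ for every $j\in X_r$ by \eqref{minimal}; the condition $i\notin J_i$ is inherited from the general setup; and the minimality of $X_r$ in the sense of \eqref{minimal} is precisely \eqref{minimalbis}. Lemma \ref{permit} then produces a permutation $\sigma_r$ of $X_r$ for which, after relabeling the elements of $X_r$ according to $\sigma_r$, property \eqref{per1} holds with $x_{\sigma_r(|X_r|)}$ equal to the new maximum $l_r=\max X_r$, so that \eqref{per1} specializes directly to \eqref{2313}.

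Composing the internal permutations $\sigma_r$ acting block-by-block, while preserving the inter-block order fixed in the first step, yields a single permutation of $\{1,\dots,\ell\}$ under which the full conclusion of the proposition holds. The main (indeed the only nontrivial) step is Lemma \ref{permit}; beyond this, the proof reduces to the observation that the closure property \eqref{minimal} makes the two rearrangements compatible, which is straightforward since inter-block permutations do not disturb the internal coupling structure of each block.
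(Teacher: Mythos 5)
Your proposal is correct and follows exactly the route the paper takes, which is stated there in a single sentence: first permute globally so that the blocks become consecutive intervals $X_1<\dots<X_m$, then apply Lemma \ref{permit} block by block (noting that each block has at least two elements since $r(i)\neq i$ and $r(i)\in X_r$, and that minimality of $X_r$ is precisely \eqref{minimalbis}). Your verification of the hypotheses of Lemma \ref{permit} is the only content beyond that sentence, and it is accurate.
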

Hereafter, let us assume that we have permuted the indices according to Proposition \ref{permitprop}. Define continuous maps $\mathcal{P}_j:\Sigma \setminus \{p_1,\dots,p_\ell\} \to \sigma_j$, $j=1,\dots,N$, as  
$$\mathcal{P}_j= \Pi^{-1} \left( q_i+\frac{1}{4} \Upsilon_{i,r(i)} \circ \Pi \right) ,\:\: \sigma_j=\Pi^{-1}\left(q_i+\frac{1}{4} \mathbb{S}^1\right)$$
when $j \in \mathcal{N}_i$, $ i=1,\dots,\ell-1$, and 
$$\mathcal{P}_j= \Pi^{-1} \left(q_{r(\ell)}+ \ell \Upsilon_{r(\ell),\ell} \circ \Pi  \right) ,\:\: \sigma_j=\Pi^{-1}\left(q_{r(\ell)}+\ell \mathbb{S}^1\right)$$
when $j \in \mathcal{N}_\ell$. Let us fix $N$ angles 
\begin{eqnarray} \label{angles}
0<\theta_N<\dots<\theta_1<\frac{\pi}{2}, 
\end{eqnarray}
and let $\bbm[\xi]^0=(\xi_1^0, \dots, \xi_N^0)\in \sigma_1 \times \dots \times \sigma_N$ be a $N-$tuple of distinct points, with 
$$\xi_j^0= \left\{ \begin{array}{ll} \Pi^{-1}(q_i+\frac{1}{4} e^{{\rm i}\theta_j})& \hbox{if }j \in \mathcal{N}_i, \, i=1,\dots,\ell-1,\\
\Pi^{-1}(q_{r(\ell)}+\ell e^{-{\rm i}\theta_j})& \hbox{if }j \in \mathcal{N}_\ell. \end{array} \right.$$
Thanks to Theorem \ref{thm1200}, for all $\gamma \in \mathcal{F}$ we can find $\bbm[z]^\gamma =(z^\gamma_1,\dots,z^\gamma_N)\in Z$ (with $z^\gamma_j= \Pi [\gamma_j (\bbm[\xi]_\gamma^*)]$ for $j=1,\dots,N$), where
$$Z=\{\bbm[z]=(z_1,\dots,z_N) \in \mathbb{C}^{N}: \, z_j \in L_{i,r(i)}(\theta_j) \;\, \forall  j \in \mathcal{N}_i,\, i=1,\dots,\ell \}.$$
Arguing as in Lemma \ref{prevenar}, the aim now is to discuss the set $\overline{L_{i,r(i)}(\theta_j)} \cap \overline{L_{s,r(s)}(\theta_k)}$ for $r(i) \leq r(s)$ and $j \in \mathcal N_i$, $k\in \mathcal N_s$ with $j \neq k$, 
where the closure is meant with respect to $d_{g_1}$. Since $i \in J_{r(i)}$, by \eqref{minimal} notice that $i \in X_r$ if and only if $r(i) \in X_r$, and by \eqref{2313}-\eqref{angles} the following distinct alternatives can arise:
\begin{itemize}
\item if $i=s$, then $\overline{L_{i,r(i)}(\theta_j)} \cap \overline{L_{s,r(s)}(\theta_k)}=\{q_i,q_{r(i)} \}$ (with $q_\ell=\infty$);
\item if $i \not=s$ and $r(i)=r(s)$, then $\overline{L_{i,r(i)}(\theta_j)} \cap \overline{L_{s,r(s)}(\theta_k)}=\{ q_{r(i)} \}$;
\item if $i,s \in X_r$ with $r(i)<r(s)$ and $i=l_r$, then $s<r(s)$ and
$$\overline{L_{i,r(i)}(\theta_j)} \cap \overline{L_{s,r(s)}(\theta_k)}=\left\{ \begin{array}{ll}  \{q_s,q_i\} &\hbox{if }r(i)=s,\, r(s)=l_r  \\
\{q_i \} &\hbox{if }r(i) \neq s,\, r(s)=l_r \\
\{q_s\} &\hbox{if }r(i)=s,\, r(s) \neq l_r \\
\emptyset &\hbox{if }r(i)\neq s,\, r(s)\neq l_r; \end{array} \right. $$
\item if $i,s \in X_r$ with $r(i)<r(s)$ and $s=l_r$, then $i<r(i)<r(s)<s$ and $\overline{L_{i,r(i)}(\theta_j)} \cap \overline{L_{s,r(s)}(\theta_k)}=\emptyset$;
\item if $i,s \in X_r \setminus \{l_r\}$ with $r(i)<r(s)$, then either $i<r(i)\leq s <r(s)$ with
$$\overline{L_{i,r(i)}(\theta_j)} \cap \overline{L_{s,r(s)}(\theta_k)}=\left\{ \begin{array}{ll}  \{q_s\} &\hbox{if }r(i)=s \\
\emptyset &\hbox{if }r(i)\neq s, \end{array} \right. $$
or $s<i<r(i)<r(s)$ with $\overline{L_{i,r(i)}(\theta_j)} \cap \overline{L_{s,r(s)}(\theta_k)}=\emptyset$;
\item if $i$ and $s$ belong to different blocks, then $\overline{L_{i,r(i)}(\theta_j)} \cap \overline{L_{s,r(s)}(\theta_k)}=\emptyset$.
\end{itemize}
Since the $L_{i,r(i)}(\theta_j)'$s can share at most endpoints among $p_1,\dots,p_\ell$, we just need to analyze the behavior at each $p_i$. Every $p_i$ is an endpoint 
of $L_{i,r(i)}(\theta_j)$, $j \in \mathcal N_i$, and of $L_{s,r(s)}(\theta_k)$, $s \in J_i$ and $k \in \mathcal N_s$. Letting 
$$\tilde{\mathcal N}_i=\mathcal N_i \cup \bigcup_{r \in J_i} \mathcal N_r,$$
we can argue precisely as in Lemma \ref{prevenar} to show  that \eqref{0956} implies
$$\sup_{\bbm[z] \in Z} \Psi(\bbm[z])<+\infty$$
with $\Psi$ given in \eqref{1528}, which in turn is equivalent to \eqref{1518}.

\section{A compactness property} 
\setcounter{equation}{0}  
We shall show that \eqref{mimabis} holds provided that $M$ is sufficiently large. Since the choice $\gamma=id_K$ in the definition of $\mathcal H^*$ leads to
$$ \mathcal H^* \geq\min_{\hbox{\scriptsize$\bbm[\xi]$}\in K} \Psi_+ (\bbm[\xi])+O(1)\geq 
\frac{1}{2\pi}   \bigg(\sum_{i=1}^\ell \alpha_i\bigg) \bigg(\sum_{j=1}^N \Gamma_j\bigg) \log d
-\frac{1}{2\pi} \bigg(\sum_{j,k=1\atop j\neq k}^N \Gamma_j \Gamma_k \bigg)\log \hbox{diam}\,\Sigma 
+O(1) $$
in view of \eqref{1048}, where $d=\displaystyle \inf \{d_g(\xi,p_i):\ \xi \in \sigma_j,\,i=1,\dots,\ell,j=1,\dots,N\}>0$, by the previous Section we deduce that $\cal H^*$ is uniformly bounded in $M$. Therefore, it is enough to show that the tangential derivative of $\mathcal H$ on $\partial \mathcal D$ is non-zero for uniformly bounded values of $\cal H$ when $M$ is large enough. By contradiction assume that there exist $\bbm[\xi]_n=(\xi_1^n, \dots, \xi_N^n)\in {\cal M}$ and $(\beta^n_1, \beta_2^n)\neq (0,0)$ such that 
\begin{eqnarray}
 &\Phi(\bbm[\xi]_n)\to +\infty, \qquad
-C\leq \cal H(\bbm[\xi]_n)\leq C,& \label{boupsi} \\
 & \beta_1^n\nabla \cal H(\bbm[\xi]_n)+\beta_2^n\nabla \Phi(\bbm[\xi]_n)=0& \nonumber
\end{eqnarray}
for some $C>0$, where the last expression accounts also for non-regular points of $\partial \mathcal D$ and can  be re-written as
\beq\label{leone}
(\beta_2^n-\beta_1^n)\Gamma_j\sum_{i=1}^\ell\alpha_i\nabla_{\xi_j} G(\xi_j^n,p_i)+2(\beta_1^n+\beta_2^n)\Gamma_j\sum_{k=1\atop k\neq j}^N\Gamma_k\nabla_{\xi_j} G(\xi_j^n,\xi_k^n)=O(1)\qquad \forall \ j.\eeq
To get a contradiction, our aim is to identify the leading term of the left hand side in \eqref{leone}.
Without loss of generality we assume that
\beq\label{ops}(\beta_1^n)^2+(\beta_2^n)^2=1,\quad \beta_1^n-\beta_2^n\geq 0, \quad \min_{j\neq k}d_g(\xi_j^n,\xi_k^n)=o(1),\quad \min_{j,i}d_g(\xi_j^n,p_i)=o(1), \eeq
where we have used
$$2\sum_{j,k=1\atop j\neq k}^N\Gamma_j\Gamma_kG(\xi_j^n, \xi_k^n)=\cal H(\bbm[\xi]_n)+\Phi(\bbm[\xi]_n)+O(1)\to +\infty$$
and $$2\sum_{i=1}^\ell \alpha_i\sum_{j=1}^N\Gamma_j G(\xi_j, p_i)= \Phi(\bbm[\xi]_n)-\cal H(\bbm[\xi]_n)+O(1)\to +\infty$$
thanks to \eqref{boupsi}. Given $r_0>0$ small enough (smaller than the injectivity radius of $(\Sigma,g)$), we introduce normal coordinates $y_\xi: y_\xi^{-1}(B_{r_0}(0)) \to B_{r_0}(0)$ which depend smoothly on $\xi \in \Sigma$. Since $y_\xi(\xi)=0$ and $d_g(x,\xi)=|y_\xi(x)|$ for all $x \in y_\xi^{-1}(B_{r_0}(0))$, we have that
\begin{equation} \label{1538}
\nabla_{\xi_1} \log d_g(\xi_1,\xi_2)= \frac{y_{\xi_2}(\xi_1)}{|y_{\xi_2}(\xi_1)|^2}=
\frac{y_\xi(\xi_1)-y_\xi(\xi_2)}{|y_\xi(\xi_1)-y_\xi(\xi_2)|^2}+o\Big(\frac{1}{d_g(\xi_1,\xi_2)} \Big)
\end{equation}
as $\xi_1,\xi_2 \to \xi$, owing  to
\begin{eqnarray*}
y_\xi(\xi_1)-y_\xi(\xi_2)&=&y_{\xi_2}(\xi_1)-y_{\xi_2}(\xi_2)+O(d_g(\xi_2,\xi))|\nabla_\xi y_{\tilde \xi}(\xi_1)-\nabla_\xi y_{\tilde \xi}(\xi_2)|  \\
&=&y_{\xi_2}(\xi_1)+o(|y_{\xi_2}(\xi_1)|) 
\end{eqnarray*}
as $\xi_1,\xi_2 \to \xi$ (where $\tilde \xi$ is ``between" $\xi$ and $\xi_2$).

Hereafter we might pass to subsequences without further notice. Let us split $\{1,\dots,N\}$ as $Z_0\cup \dots \cup Z_\ell$, where 
$$Z_0=\{j: \ |\xi_j^n-p_i|\geq c \hbox{ for all }i\},\quad Z_i=\{j: \ \xi_j^n\to p_i\} \quad  i=1,\dots,\ell.$$ 
We begin with the following two Lemmas.
\begin{lemma}\label{step1}
The following holds:
\begin{enumerate} \item[a)] if $\#Z_i=1$ for some $i=1,\dots,\ell$, then $\beta_1^n-\beta_2^n\to 0$;
\item[b)]  if $d_g(\xi_j^n,\xi_k^n)=o(1)$ for some $j,k\in Z_0$, $j\neq k$, then $\beta_1^n+\beta_2^n\to 0;$
\item[c)]  there exists $i\in\{1,\dots,\ell\}$ such that $\#Z_i\geq 2. $\end{enumerate}
\end{lemma}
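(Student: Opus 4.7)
The plan is to analyze the critical-point identity \eqref{leone} at a carefully chosen index $j$, using \eqref{1538} to extract the leading singular contribution and the positivity of the charges $\Gamma_r,\alpha_i$ to prevent cancellation. In each case I will multiply \eqref{leone} by a suitable decaying scale so that one of the two sums reduces to a fixed non-zero vector, forcing its coefficient to vanish.

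For (a), let $Z_i=\{j_0\}$ and write \eqref{leone} at $j=j_0$. Since no other vortex approaches $p_i$ and $\xi_{j_0}^n$ stays away from $p_{i'}$ for $i'\neq i$, every $\nabla_{\xi_{j_0}}G(\xi_{j_0}^n,\cdot)$ appearing is uniformly bounded except $\nabla_{\xi_{j_0}}G(\xi_{j_0}^n,p_i)$, which by \eqref{1538} has size of order $1/d_g(\xi_{j_0}^n,p_i)$ and is proportional to a unit vector up to $o(1)$. Multiplying \eqref{leone} at $j_0$ by $d_g(\xi_{j_0}^n,p_i)=o(1)$ and taking the euclidean norm would then give $\Gamma_{j_0}\alpha_i|\beta_1^n-\beta_2^n|/(2\pi)=o(1)$.

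For (b), fix $j,k\in Z_0$ with $d_g(\xi_j^n,\xi_k^n)\to 0$. Up to a subsequence both converge to a common $\xi_*\in\Sigma\setminus\{p_1,\dots,p_\ell\}$, so they belong to the cluster $\mathcal C=\{r:\xi_r^n\to\xi_*\}\subset Z_0$, which has at least two elements. Let $d_n=\min_{r\neq s,\,r,s\in\mathcal C}d_g(\xi_r^n,\xi_s^n)$, attained (after subsequence) at a fixed pair $j_0,k_0$. In normal coordinates at $\xi_*$ I set $\tilde y_r^n=(y_{\xi_*}(\xi_r^n)-y_{\xi_*}(\xi_{j_0}^n))/d_n$ and call $\mathcal C'\subseteq\mathcal C$ the subcluster of indices for which $\tilde y_r^n$ stays bounded; by the minimality of $d_n$ the limits $\tilde y_r^*$, $r\in\mathcal C'$, are pairwise distinct. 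Choose $e\in\R^2$ separating the projections $\{e\cdot\tilde y_r^*\}_{r\in\mathcal C'}$, and let $j_0^*\in\mathcal C'$ maximize $e\cdot\tilde y_r^*$. Then the vector
$$v=\sum_{r\in\mathcal C'\setminus\{j_0^*\}}\Gamma_r\,\frac{\tilde y_{j_0^*}^*-\tilde y_r^*}{|\tilde y_{j_0^*}^*-\tilde y_r^*|^2}$$
satisfies $e\cdot v>0$, so $v\neq 0$. Multiplying \eqref{leone} at $j=j_0^*$ by $d_n$ should reduce it to $(\beta_1^n+\beta_2^n)\,v+o(1)=o(1)$: the $(\beta_1^n-\beta_2^n)$-sum is $o(1)$ since $j_0^*\in Z_0$; the terms with $r\notin\mathcal C'$ are $o(1)$ because either $d_g(\xi_{j_0^*}^n,\xi_r^n)\geq c$ or $d_g(\xi_{j_0^*}^n,\xi_r^n)/d_n\to\infty$; and \eqref{1538} handles the remaining terms. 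Hence $\beta_1^n+\beta_2^n\to 0$.

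For (c), I argue by contradiction and assume $\#Z_i\leq 1$ for every $i$. Since $\min_{j,i}d_g(\xi_j^n,p_i)=o(1)$, some $Z_i$ has exactly one element, so (a) gives $\beta_1^n-\beta_2^n\to 0$. The pair realizing $\min_{j\neq k}d_g(\xi_j^n,\xi_k^n)=o(1)$ cannot mix $Z_0$ with some $Z_i$, $i\geq 1$ (else the $Z_0$-index would approach $p_i$), nor lie in two different $Z_{i_1},Z_{i_2}$ with $i_1,i_2\geq 1$ (else $p_{i_1}=p_{i_2}$ and $\#Z_{i_1}\geq 2$). Thus both indices lie in $Z_0$ and (b) yields $\beta_1^n+\beta_2^n\to 0$, contradicting $(\beta_1^n)^2+(\beta_2^n)^2=1$. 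The main obstacle is (b): nested sub-clusters could cancel the leading singular vector, and it is precisely the choice of the minimal internal scale $d_n$ together with the extreme-vertex selection $j_0^*$ that guarantees $v\neq 0$.
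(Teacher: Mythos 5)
Your parts (a) and (c) coincide with the paper's proof: in (a) you isolate the single blowing-up term $\nabla_{\xi_{j_0}}G(\xi_{j_0}^n,p_i)$ and multiply by the vanishing distance, and in (c) you argue by contradiction, noting as the paper implicitly does that the closest pair realizing $\min_{j\neq k}d_g(\xi_j^n,\xi_k^n)=o(1)$ must lie entirely in $Z_0$ once all $Z_i$, $i\geq 1$, are singletons or empty. Part (b) is correct but takes a genuinely different route. You rescale the cluster by the minimal internal distance $d_n$, select a direction $e$ separating the limiting rescaled positions, pick the extremal vertex $j_0^*$, and use the positivity of the $\Gamma_r$ to conclude that the leading "force" vector $v$ cannot vanish; this is the classical extremal-point argument from concentration analysis, and you are right that the minimal scale together with the extreme-vertex selection is what protects you against cancellation from nested sub-clusters. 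The paper works with exactly the same cluster $I=\mathcal C'$, but never singles out an individual vertex: instead it takes the inner product of the identity \eqref{leone} at $j$ against $y_{\xi_{j_0}^n}(\xi_j^n)$ and sums over \emph{all} $j\in I$, at which point the algebraic identity \eqref{idii},
$$
2\sum_{j,k\in I,\,j\neq k}\Gamma_j\Gamma_k\,\frac{\langle z_j-z_k,\ z_j-z\rangle}{|z_j-z_k|^2}
=\sum_{j,k\in I,\,j\neq k}\Gamma_j\Gamma_k,
$$
symmetrizes away the possibly-cancelling cross terms and produces a manifestly positive constant with no geometric choices to make. Both arguments are sound, but the paper's symmetrization has the advantage that it is re-used verbatim in Lemma \ref{step2} and in the subsequent analysis of the sub-clusters $Y_r$ near a source $p_i$, where two different singular scales (vortex--vortex and vortex--source) coexist and an extremal-vertex selection would be harder to set up cleanly; you would have to repeat the direction-choosing argument at each scale, whereas the inner-product identity handles both coefficients in \eqref{gra2} simultaneously.
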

\begin{proof} 
If $Z_{i}=\{j_0\}$, the identity \eqref{leone} with $j=j_0$ in the coordinate system $y_{p_{i}}$ gives
$$(\beta_2^n-\beta_1^n)\alpha_{i}\Gamma_{j_0}\frac{y_{p_{i}}(\xi_{j_0}^n)}{|y_{p_{i}}(\xi_{j_0}^n)|^2}=O(1),$$
and then $\rm a)$ follows. Next, let $j_0, k_0\in Z_0$, $j_0\neq k_0$, be such that  $d_g(\xi_{j_0}^n,\xi_{k_0}^n)=o(1)$. We may assume
$$d_g(\xi_{j_0}^n,\xi_{k_0}^n)=\min_{j,k\in Z_0 \atop j\neq k}d_g(\xi_j^n,\xi_k^n)\qquad \forall \ n\in\N.$$
Letting
$$I=\big\{j\in Z_0: \ d_g(\xi_{j}^n,\xi_{j_0}^n)\sim d_g(\xi_{j_0}^n,\xi_{k_0}^n)\big\} \cup \{j_0 \},$$
where $\sim$ denotes sequences of same order as $n\to +\infty$, observe that by construction 
$$d_g(\xi_j^n,\xi_k^n) \sim d_g(\xi_{j_0}^n,\xi_{k_0}^n) \qquad \forall \ j,k\in I,\,j\neq k$$ and
$d_g(\xi_{j_0}^n,\xi_{k_0}^n)=o(d_g(\xi_j^n,\xi_k^n))$ for all $j\in I$ and $k\in Z_0 \setminus I$, by which $$\nabla_{\xi_j}G(\xi_j^n, \xi_k^n)=o\bigg(\frac{1}{d_g(\xi_{j_0}^n,\xi_{k_0}^n)}\bigg)\qquad \forall \ j\in I,\, k\in Z_0 \setminus I.$$
The identities \eqref{leone} read in the coordinate system $y_{\xi_{j_0}^n}$ as
\beq\label{qqw100} 2(\beta_1^n+\beta_2^n)\Gamma_j\sum_{k\in I\atop k\neq j}\Gamma_k\frac{y_{\xi_{j_0}^n}(\xi_{j}^n)-y_{\xi_{j_0}^n}(\xi_k^n)}{|y_{\xi_{j_0}^n}(\xi_j^n)-y_{\xi_{j_0}^n}(\xi_k^n)|^2}=o\bigg(\frac{1}{d_g(\xi_{j_0}^n,\xi_{k_0}^n)}\bigg)\qquad \forall \ j\in I\eeq 
in view of \eqref{1538}. Since
\beq\label{idii} 2 \sum_{j,k\in I\atop j\neq k}\Gamma_j\Gamma_k\frac{\langle z_j-z_k, z_j-z\rangle}{|z_j-z_k|^2}=2 \sum_{j,k\in I\atop j< k}\Gamma_j\Gamma_k=\sum_{j,k\in I\atop j \neq k}\Gamma_j\Gamma_k
\eeq
for all $z_j,z \in \R^2$, by taking the inner product of \eqref{qqw100} with $y_{\xi_{j_0}^n}(\xi_j^n)$ and summing up in $j\in I$ we deduce that 
$$(\beta_1^n+\beta_2^n)\sum_{j,k\in I\atop j \neq  k}\Gamma_j\Gamma_k
=o(1) .$$ 
Since  $j_0,\, k_0\in I$, we get $\beta_1^n+\beta_2^n=o(1)$, and $\rm b)$ follows. Finally, if  $\# Z_{i} \leq 1$ were true for all $i=1,\dots,\ell$, by \eqref{ops} we would get that  $\# Z_{i_0}=1$ for some $i_0=\{1,\dots,\ell\}$. On the other hand, thanks to \eqref{ops} we also have that 
$d_g(\xi_j^n,\xi_k^n)=o(1)$ for some $j,k\in Z_0$, $j\neq k$. Then, by $\rm a)$ and $\rm b)$ we would derive $\beta_1^n+\beta_2^n=o(1)$, $\beta_1^n-\beta_2^n=o(1)$, in contradiction with \eqref{ops}.
\end{proof}
\begin{lemma} \label{step2}
If $\#Z_i\geq 2$ for some $i=1,\dots,\ell$, then $d_g(\xi^n_{j},p_{i})=O(d_g(\xi^n_{j},\xi_{k}^n))$ for all  $j,k\in Z_i$, $j\neq k$. 
\end{lemma}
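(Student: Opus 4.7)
I argue by contradiction and combine two ingredients: the rescaling technique of Lemma \ref{step1}(b) applied to a tight subcluster of $Z_i$, together with a Pohozaev-type scaling identity for $Z_i$ as a whole. Suppose the claim fails, so along a subsequence there exist $j,k \in Z_i$ with $d_g(\xi_j^n,p_i)/d_g(\xi_j^n,\xi_k^n) \to +\infty$. Among all such \emph{offending} pairs in $Z_i$, pick one $(j_*,k_*)$ minimizing $\varepsilon_n := d_g(\xi_{j_*}^n,\xi_{k_*}^n)$, and set $D_n^* := d_g(\xi_{j_*}^n,p_i)$; then $\varepsilon_n = o(D_n^*)$ and, by the triangle inequality, $d_g(\xi_{k_*}^n,p_i) \sim D_n^*$. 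Introduce the subcluster
\[ I = \{m \in Z_i : d_g(\xi_m^n,\xi_{j_*}^n) = O(\varepsilon_n)\}, \]
which contains $j_*$ and $k_*$. A short triangle-inequality check using the minimality of $\varepsilon_n$ yields $d_g(\xi_m^n,\xi_l^n) = \Theta(\varepsilon_n)$ for all $m \neq l$ in $I$, $d_g(\xi_m^n,p_i) \sim D_n^*$ for every $m \in I$, and $d_g(\xi_m^n,\xi_l^n) \gg \varepsilon_n$ for $m \in I$ and $l \in Z_i \setminus I$.

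The argument of Lemma \ref{step1}(b) then applies to $I$: multiplying \eqref{leone} by $\varepsilon_n$ in coordinates $y_{\xi_{j_*}^n}$, the singular term $\nabla G(\xi_j^n,p_i)$ yields $O(\varepsilon_n/D_n^*) = o(1)$, interactions with $p_{i'}$ for $i' \neq i$ and with $\xi_k^n$ for $k \notin I$ are $o(1)$, and only the intra-$I$ interactions survive. By \eqref{1538},
\[ (\beta_1^n + \beta_2^n)\,\Gamma_j \sum_{k \in I,\, k \neq j} \Gamma_k \frac{\hat x_j - \hat x_k}{|\hat x_j - \hat x_k|^2} = o(1), \qquad j \in I, \]
with $\hat x_j := y_{\xi_{j_*}^n}(\xi_j^n)/\varepsilon_n$ bounded and pairwise well-separated. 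Taking inner product with $\hat x_j$, summing over $j \in I$ and invoking \eqref{idii} force $\beta_1^n + \beta_2^n = o(1)$.

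For the second ingredient I derive a Pohozaev-type identity on $Z_i$ via the scaling $\xi_j^{(t)} := y_{p_i}^{-1}(t\,y_{p_i}(\xi_j^n))$ for $j \in Z_i$, leaving $\xi_k^n$ untouched for $k \notin Z_i$. Since $\beta_1^n \nabla \mathcal H(\bbm[\xi]_n) + \beta_2^n \nabla \Phi(\bbm[\xi]_n) = 0$, the scalar $t \mapsto \beta_1^n \mathcal H(\xi^{(t)}) + \beta_2^n \Phi(\xi^{(t)})$ has vanishing derivative at $t = 1$. In Riemann normal coordinates at $p_i$ the metric is Euclidean up to $O(|x|^2)$, hence $d_g(\xi_j^{(t)},p_i) = t|y_{p_i}(\xi_j^n)|$ exactly and $d_g(\xi_j^{(t)},\xi_k^{(t)}) = t|y_{p_i}(\xi_j^n) - y_{p_i}(\xi_k^n)|(1+o(1))$ for $j,k \in Z_i$, so the $t$-derivative of each corresponding $\log d_g$ at $t=1$ equals $1+o(1)$. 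All other pieces of $\mathcal H$ and $\Phi$ (interactions with indices outside $Z_i$, with other singular sources, and the smooth parts $H$ and $h$) contribute $o(1)$ because $y_{p_i}(\xi_j^n) \to 0$. Collecting and clearing constants,
\[ (\beta_1^n - \beta_2^n)\,\alpha_i \sum_{j \in Z_i} \Gamma_j = (\beta_1^n + \beta_2^n) \sum_{j \neq k \in Z_i} \Gamma_j \Gamma_k + o(1). \]
Combined with $\beta_1^n + \beta_2^n = o(1)$ and $\alpha_i \sum_{Z_i} \Gamma_j > 0$, this forces $\beta_1^n - \beta_2^n = o(1)$, contradicting the normalization $(\beta_1^n)^2 + (\beta_2^n)^2 = 1$ in \eqref{ops}.

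The main obstacle will be making the Pohozaev derivation genuinely rigorous with an $o(1)$ remainder: this rests on the fact that Riemann normal coordinates flatten the metric to second order, so the curvature corrections in $\log d_g(\xi_j^{(t)},\xi_k^{(t)})$ are of size $o(1)$ as $\xi_j^n,\xi_k^n \to p_i$, together with the observation that every cross-term between a point in $Z_i$ and one outside $Z_i$ involves a smooth bounded gradient paired with the small scaling vector $y_{p_i}(\xi_j^n)$.
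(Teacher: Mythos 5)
Your strategy is sound, and its second half is genuinely different from the paper's. The paper also argues by contradiction on a tight subcluster $I$ of $Z_i$, but it selects the pair minimizing the \emph{ratio} $d_g(\xi_j^n,\xi_k^n)/d_g(\xi_j^n,p_i)$ (see \eqref{mindi}) rather than the mutual distance, works throughout in coordinates centered at the cluster point $\xi_{j_0}^n$, and extracts from \eqref{mash} the quantitative rate $\beta_1^n+\beta_2^n=O\big(d_g(\xi_{j_0}^n,\xi_{k_0}^n)/d_g(\xi_{j_0}^n,p_i)\big)$ rather than a bare $o(1)$. It then pairs \eqref{mash} with $y_{\xi_{j_0}^n}(\xi_j^n)-y_{\xi_{j_0}^n}(p_i)$ on the \emph{same} cluster $I$, and the quantitative rate is exactly what absorbs the error $(\beta_1^n+\beta_2^n)\,o\big(d_g(\xi_{j_0}^n,p_i)/d_g(\xi_{j_0}^n,\xi_{k_0}^n)\big)$. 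Your replacement of this second step by a scaling identity on all of $Z_i$ is attractive: it dispenses with the rate because the radial field pairs with the mutual-interaction gradients to produce the bounded symmetric quantity of \eqref{idii}, and the identity you obtain is precisely the sum over $r$ of the identities \eqref{gra2} that the paper derives only \emph{after} Lemma \ref{step2}.

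There is, however, one under-justified step, and it is exactly the difficulty that the paper's choice of cluster-centered coordinates is designed to avoid. You assert $\frac{d}{dt}\log d_g(\xi_j^{(t)},\xi_k^{(t)})\big|_{t=1}=1+o(1)$ because normal coordinates at $p_i$ flatten the metric to second order. That controls the correction to the \emph{function} $\log d_g$ by $O(\rho_n^2)$, with $\rho_n=\max_{j\in Z_i}|y_{p_i}(\xi_j^n)|$, but it does not by itself control its $t$-derivative: applying \eqref{1538} in the chart $y_{p_i}$ term by term produces an error $o\big(1/d_g(\xi_j^n,\xi_k^n)\big)$ which, paired with the scaling vector $y_{p_i}(\xi_j^n)=O(\rho_n)$, is of size $\rho_n^2/d_g(\xi_j^n,\xi_k^n)$ — and this blows up precisely in the regime you are trying to rule out. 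The estimate you need is nonetheless true, but it requires the first-variation formula for arc length and the cancellation between the two endpoint contributions: writing $u=y_{p_i}(\xi_j^n)$, $v=y_{p_i}(\xi_k^n)$, the symmetric combination $\partial_u d_g+\partial_v d_g$ is $O(\rho_n|u-v|)$ (not merely $O(\rho_n^2)$), so that after pairing with $\frac{u+v}{2}$ and $\frac{u-v}{2}$ the total correction is $O(\rho_n^2)$ \emph{uniformly} in $|u-v|$. With that supplement (and the routine subsequence bookkeeping needed to make your minimal offending pair and the cluster $I$ well defined), your argument is complete.
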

\begin{proof} 
By contradiction, assume the existence of $j_0, k_0 \in Z_i$, $j_0 \neq k_0$, such that \beq\label{mindi} \frac{d_g(\xi^n_{j_0},\xi_{k_0}^n)}{d_g(\xi^n_{j_0},p_{i})}=\min_{j,k \in Z_i \atop j \neq k }\frac{d_g(\xi^n_{j},\xi_{k}^n)}{d_g(\xi^n_{j},p_{i})}\to 0.\eeq
Letting
$$I=\big\{j\in Z_i: \ d_g(\xi_{j}^n,\xi_{j_0}^n)\sim d_g(\xi_{j_0}^n,\xi_{k_0}^n)\big\}\cup \{j_0 \},$$
observe that for $j \in I$ by construction 
$$d_g(\xi_j^n, p_{i})\sim d_g(\xi_{j_0}^n,p_{i}) $$
and 
\begin{equation} \label{0908}
d_g(\xi_j^n, \xi_k^n)\sim d_g(\xi_{j_0}^n,\xi_{k_0}^n) \quad \forall \ k\in I,\, k\neq j, \qquad d_g(\xi_{j_0}^n,\xi_{k_0}^n)=o(d_g(\xi_j^n,\xi_k^n)) \quad \forall \ k\in Z_i \setminus I,
\end{equation}
by which 
$$\nabla_{\xi_j}G(\xi_j^n, \xi_k^n)=o\bigg(\frac{1}{d_g(\xi_{j_0}^n,\xi_{k_0}^n)}\bigg)\qquad \forall \ j\in I,\ k\in Z_i \setminus I.$$
The identities \eqref{leone} in the coordinate system $y_{\xi_{j_0}^n}$ read as
\beq\label{mash}\begin{aligned}(\beta_2^n-\beta_1^n)&\alpha_i\Gamma_j\frac{y_{\xi_{j_0}^n}(\xi_j^n)-y_{\xi_{j_0}^n}(p_{i})}{|y_{\xi_{j_0}^n}(\xi_j^n)-y_{\xi_{j_0}^n}(p_{i})|^2}+2(\beta_1^n+\beta_2^n)\Gamma_j\sum_{k\in I\atop k\neq j}\Gamma_k\frac{y_{\xi_{j_0}^n}(\xi_j^n)-y_{\xi_{j_0}^n}(\xi_k^n)}{|y_{\xi_{j_0}^n}(\xi_j^n)-y_{\xi_{j_0}^n}(\xi_k^n)|^2}\\ &=o\bigg(\frac{\beta_1^n+\beta_2^n}{d_g(\xi_{j_0}^n,\xi_{k_0}^n)}\bigg)+o\Big(\frac{\beta_2^n-\beta_1^n}{d_g(\xi_{j_0}^n,p_{i})}\Big)+O(1)\qquad \forall \ j\in I \end{aligned}\eeq
in view of \eqref{1538}. By taking the inner product of \eqref{mash} with $y_{\xi_{j_0}^n}(\xi_j^n)$ and summing up in $j\in I$ we deduce that
$$(\beta_1^n +\beta_2^n)\sum_{j,k\in I\atop j \neq k}\Gamma_j\Gamma_k=
o(\beta_1^n+\beta_2^n)+(\beta_2^n-\beta_1^n)O\Big(\frac{d_g(\xi_{j_0}^n,\xi_{k_0}^n)}{d_g(\xi_{j_0}^n,p_{i})}\Big)+O(d_g(\xi_{j_0}^n,\xi_{k_0}^n))$$ 
thanks to \eqref{idii}, by which, using \eqref{mindi},  we get \beq\label{beta12}\beta_1^n+\beta_2^n=O\Big(\frac{d_g(\xi_{j_0}^n,\xi_{k_0}^n)}{d_g(\xi_{j_0}^n,p_{i})}\Big) \to 0.\eeq 
 By taking the inner product of \eqref{mash} with $y_{\xi_{j_0}^n}(\xi_j^n)-y_{\xi_{j_0}^n}(p_{i})$ and summing up in $j\in I$ we obtain that
$$(\beta_1^n-\beta_2^n)\alpha_i\sum_{j\in I}\Gamma_j=
(\beta_1^n +\beta_2^n)\sum_{j,k\in I\atop j \neq  k}\Gamma_j\Gamma_k+
o(1)+(\beta_1^n+\beta_2^n)o\Big(\frac{d_g(\xi_{j_0}^n,p_{i})}{d_g(\xi_{j_0}^n,\xi_{k_0}^n)}\Big)$$ 
thanks to \eqref{idii}. By \eqref{beta12} we arrive at $\beta_1^n\pm\beta_2^n \to 0$, in contradiction with \eqref{ops}.
\end{proof}

\medskip  If $\#Z_i \geq 2$, let us split $Z_i$ as $Y_1\cup \dots \cup Y_l$, $l\geq 1$, in such a way that for all $j\in Y_r$
\beq\label{mario2}
d_g(\xi_j^n,p_i)\sim d_g(\xi_k^n,p_i)\quad \forall \  k\in Y_r,\qquad 
d_g(\xi_j^n,p_i)=o(d_g(\xi_k^n,p_i))\quad \forall \  k\in Y_{r+1}\cup\dots \cup Y_l.\eeq 
Notice that by construction $d_g(\xi_j^n,\xi_k^n)\sim d_g(\xi_k^n,p_i)$ for all $j\in Y_r$, $k\in Y_{r+1}\cup\dots \cup Y_l$, and by Lemma \ref{step2} $d_g(\xi_j^n,\xi_k^n)\sim  d_g(\xi_k^n,p_i)$ for all $j, k\in Y_r$ ($j\neq k$), yielding 
\beq\label{mario3}d_g(\xi_j^n,\xi_k^n)\sim d_g(\xi_k^n,p_i)\qquad \forall \ j\in Y_r, \ k\in Y_{r}\cup\dots \cup Y_{l}, \;j\neq k .\eeq
Combining \eqref{mario2}-\eqref{mario3} we get
$$\nabla_{\xi_j}G(\xi_j^n, \xi_k^n)=o\bigg(\frac{1}{d_g(\xi_j^n,p_i)}\bigg)\qquad \forall \ j\in Y_r, \ k\in Y_{r+1}\cup \dots \cup Y_{l},$$
which inserted in \eqref{leone} (written in the coordinate system $y_{p_i}$) gives  that 
\beq\label{qw2}2(\beta_1^n+\beta_2^n)\Gamma_j\!\!\sum_{k\in Y_1\cup \dots \cup Y_r\atop k\neq j}\!\!\Gamma_k\frac{y_{p_i}(\xi_j^n)-y_{p_i}(\xi_k^n)}{|y_{p_i}(\xi_j^n)-y_{p_i}(\xi_k^n)|^2}=(\beta_1^n-\beta_2^n)\alpha_i\Gamma_j\frac{y_{p_i}(\xi_j^n)}{|y_{p_i}(\xi_j^n)|^{2}}+o\bigg(\frac{1}{d_g(\xi_j^n,p_i)}\bigg)\eeq 
for all $j\in Y_r$ in view of \eqref{1538}. Since  $|y_{p_i}(\xi_j^n)|=o(|y_{p_i}(\xi_j^n)-y_{p_i}(\xi_k^n)|) $ for all $ j\in Y_1\cup \dots \cup Y_{r-1}$ and $k\in Y_r$ owing to  \eqref{mario2}-\eqref{mario3}, we can compute 
 \beq\label{obs}\frac{\langle y_{p_i}(\xi_j^n)-y_{p_i}(\xi_k^n),y_{p_i}(\xi_j^n)\rangle }{|y_{p_i}(\xi_j^n)-y_{p_i}(\xi_k^n)|^2}\!=1+\frac{\langle y_{p_i}(\xi_j^n)-y_{p_i}(\xi_k^n),y_{p_i}(\xi_k^n)\rangle }{|y_{p_i}(\xi_j^n)-y_{p_i}(\xi_k^n)|^2}\!=1+o(1)\eeq 
for all $j\!\in\! Y_r$ and $k\!\in\! Y_1\cup\dots \cup Y_{r-1}$. By taking the inner product of \eqref{qw2} with $y_{p_i}(\xi_j^n)$ and summing up in $j\in Y_r$ we get that
\beq\label{gra2}(\beta_1^n+\beta_2^n)\bigg(
\sum_{j,k\in Y_r\atop j\neq k}\Gamma_j\Gamma_k
+2 \sum_{j\in Y_r\atop k\in Y_1\cup \dots \cup Y_{r-1}}\Gamma_j\Gamma_k
\bigg)=(\beta_1^n-\beta_2^n)\alpha_i\sum_{j\in Y_r}\Gamma_j+o(1)\qquad \forall \ r\geq 1\eeq
thanks to \eqref{idii} and \eqref{obs}. 
Since $\# Z_{i}\geq 2$, notice that the coefficient in brackets on the left hand side of  \eqref{gra2} is positive when  $r=l$, and then $\beta_1^n-\beta_2^n$ and $\beta_1^n+\beta_2^n$ are positively proportional up to higher order terms. By \eqref{ops} and \eqref{gra2} (with $r=l$) we deduce that
\beq\label{bell0}\beta_1^n-\beta_2^n,\, \beta_1^n+\beta_2^n, |\beta_1^n|,\, |\beta_2^n|\geq c>0,\eeq 
taking into account that  $\beta_2^n =o(1)$ would imply $\beta_1^n=1+o(1)$ and  consequently, by \eqref{gra2} (with  $r=1$),
$$\sum_{j,k\in Y_1\atop j\neq k}\Gamma_j\Gamma_k=\alpha_i\sum_{j\in Y_1}\Gamma_j,$$  
 contradicting \eqref{compact}. Setting
\beq\label{gra3}a=\lim_{n \to +\infty} \frac{\beta_2^n}{\beta_1^n} \neq 0,\eeq
 let us evaluate the different pieces of the energy as follows: 
$$\begin{aligned}
&\sum_{j,k\in Z_i \atop j\neq k} \Gamma_j\Gamma_kG(\xi_j^n,\xi_k^n)-\alpha_i\sum_{j\in Z_i}\Gamma_jG(\xi_j^n,p_i)+O(1)\\
&=\sum_{r=1}^l\sum_{j,k\in Y_r \atop j\neq k} \Gamma_j\Gamma_kG(\xi_j^n,\xi_k^n)+2\sum_{r=1}^l\!\!\sum_{j\in Y_r\atop k\in Y_1\cup \dots \cup Y_{r-1}}\!\!\!\!\!\!\Gamma_j\Gamma_kG(\xi_j^n,\xi_k^n)-\alpha_i\sum_{r=1}^l \sum_{j\in Y_r}\Gamma_j G(\xi_j^n,p_i)+O(1)\\
&=-\frac{1}{2\pi} \sum_{r=1}^l \!\bigg[\sum_{j,k\in Y_r\atop j\neq k} \Gamma_j\Gamma_k\log d_g(\xi_j^n,\xi_k^n) +2 \!\!\!\!\sum_{j\in Y_r\atop k\in Y_1\cup\dots\cup Y_{r-1}}\!\!\!\!\!\!\Gamma_j\Gamma_k \log d_g(\xi_j^n,\xi_k^n)-\alpha_i \sum_{j\in Y_r}\Gamma_j\log d_g(\xi_j^n,p_i)\bigg].
\end{aligned}$$
Since $d_g(\xi_j^n,\xi_k^n)\sim d_g(\xi_j^n,p_i)$ for all $j\in Y_r$ and $k\in Y_1\cup \dots \cup Y_{r}$ in view of \eqref{mario3}, by \eqref{gra2} and \eqref{gra3} we have that
\begin{eqnarray*}
&&\sum_{j,k\in Y_r\atop j\neq k} \Gamma_j\Gamma_k\log d_g(\xi_j^n,\xi_k^n)+2\sum_{j\in Y_r\atop k\in Y_1\cup\dots\cup Y_{r-1}}\Gamma_j\Gamma_k\log d_g(\xi_j^n,\xi_k^n)-\alpha_i\sum_{j\in Y_r}\Gamma_j\log d_g(\xi_j^n,p_i)\\ 
&&=  \bigg(\sum_{j,k\in Y_r\atop j\neq k} \Gamma_j\Gamma_k+2\sum_{j\in Y_r\atop k\in Y_1\cup\dots\cup Y_{r-1}} \Gamma_j\Gamma_k-\alpha_i\sum_{j\in Y_r}\Gamma_j\bigg)\log d_g(\xi_{j_r}^n,p_i)+O(1)\\ 
&&= -(a+o(1))\Big(\sum_{j,k\in Y_r\atop j\neq k} \Gamma_j\Gamma_k+2\sum_{j\in Y_r\atop k\in Y_1\cup\dots\cup Y_{r-1}}\Gamma_j\Gamma_k+\alpha_i\sum_{j\in Y_r}\Gamma_j\Big)\log d_g(\xi_{j_r}^n,p_i) +O(1),
\end{eqnarray*}
where $j_r\in Y_r$ is fixed. We have thus proved that 
\beq\label{puff}\frac{1}{a}\bigg(\sum_{j,k\in Z_i \atop j\neq k} \Gamma_j\Gamma_k G(\xi_j^n,\xi_k^n)-\alpha_i\sum_{j\in Z_i}\Gamma_jG(\xi_j^n,p_i)\bigg)\to -\infty\eeq 
for all $Z_i $ with $\#Z_i\geq 2$. 
By \eqref{bell0} we deduce $\#Z_i \neq 1$ for all $i=1,\dots,\ell$ and 
$G(\xi_j^n,\xi_k^n)=O(1)$ for all $(j,k)\notin  \bigcup_{i=1}^\ell (Z_i \times Z_i)$
according to Lemma \ref{step1}-$\rm a)$ and $\rm b)$, then we conclude that
\begin{eqnarray*}
\frac{1}{a} \cal H(\bbm[\xi]_n)&=&\frac{1}{a}\bigg[\sum_{j,k=1\atop j\neq k}^N \Gamma_j\Gamma_kG(\xi_j^n, \xi_k^n)-\sum_{i=1}^\ell\alpha_i\sum_{j=1}^N\Gamma_jG(\xi_j^n,p_i)\bigg]+O(1) \\ 
&=& \frac{1}{a} \sum_{i=1}^\ell \bigg[\sum_{j,k\in Z_i \atop j\neq k} \Gamma_j \Gamma_k G(\xi_j^n, \xi_k^n)- \alpha_i\sum_{j\in Z_i}\Gamma_j G(\xi_j^n,p_i)\bigg]+O(1) \to -\infty
\end{eqnarray*}
by \eqref{puff}, in contradiction with \eqref{boupsi}.

\appendix
\renewcommand{\theequation}{\Alph{section}.\arabic{equation}}
\section{Proof of Theorem \ref{maximizingN}}
\setcounter{equation}{0}  
Setting $a_i=1+[\alpha_i]^-$, the  aim of this section is to compute the maximum
\beq\label{max}N:=\max\{ N_1+\dots+N_\ell:  N_i \in \N\cup\{0\},\ N_i+N_{i+1}\leq a_{i+1} \: \forall \ i=1,\dots,\ell \},\eeq
with the convention $a_{\ell+1}=a_1$ and $N_{\ell+1}=N_1$. The cases $\ell=2,3$ are easier to handle and will be treated later in details. From now on, let us assume $\ell\geq 4$.
Notice that for any $i=1,\dots, \ell$  
$$0\leq N_i \leq \min\{a_i,a_{i+1}\}$$  and 
 for any $1=1,\dots,\ell-1$ $$N_{i+1}\leq \min\{a_{i+1}-N_i,a_{i+2}-N_{i+2} \}.$$
Therefore, setting $J_i=[0, \min\{a_i,a_{i+1}\}]\cap (\N\cup\{0\})$, we have that \eqref{max}  
 can be rewritten as
$$N\!=\max_{N_i \in J_i, \ i \hbox{ \tiny odd}}\Big( N_1+\min\{a_2-N_1,a_3-N_3\}+N_3+\dots+\min \{a_\ell-N_{\ell-1},a_1-N_1\}\Big)$$
when $\ell$ is even and
\beq\label{oddN}N\!=\max_{N_i\in J_i,\ i\hbox{ \tiny odd}\atop N_1+N_\ell\leq a_1} \Big( N_1+\min\{a_2-N_1,a_3-N_3\}+N_3+\dots+\min \{a_{\ell-1}-N_{\ell-2},a_\ell-N_\ell\}+N_\ell \Big)\eeq
when $\ell$ is odd.

For the sake of clarity, we fix the following three Lemmas.

\begin{lemma}\label{1150} Let $f(t)=\min\{\alpha,\beta-t\}+t+\min\{\gamma-t,\delta\}$ for $0\leq t \leq T$. Then $$
\max_{[0,T]} f=\min \{\alpha+\gamma,\beta+\gamma,\alpha+\delta+T,\beta +\delta\}.
$$

\end{lemma}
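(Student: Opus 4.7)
The plan is to prove the two directions of the equality separately. The upper bound $\max_{[0,T]} f\leq \min\{\alpha+\gamma,\beta+\gamma,\alpha+\delta+T,\beta+\delta\}$ will be immediate from four pointwise bounds on $f(t)$; the matching lower bound will be obtained via a short case analysis that exhibits, in each of four scenarios, an explicit $t^*\in[0,T]$ at which $f$ attains the asserted minimum.

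For the upper bound I would cross-match the two trivial estimates $\min\{\alpha,\beta-t\}\leq \alpha$, $\min\{\alpha,\beta-t\}\leq \beta-t$ with $\min\{\gamma-t,\delta\}\leq \gamma-t$, $\min\{\gamma-t,\delta\}\leq \delta$ to obtain, for every $t\in[0,T]$, the four inequalities
$$f(t)\leq \alpha+t+(\gamma-t)=\alpha+\gamma,\qquad f(t)\leq (\beta-t)+t+(\gamma-t)=\beta+\gamma-t\leq \beta+\gamma,$$
$$f(t)\leq \alpha+t+\delta\leq \alpha+\delta+T,\qquad f(t)\leq (\beta-t)+t+\delta=\beta+\delta,$$
where in the second and third bounds I have used $0\leq t\leq T$. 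Setting $M:=\min\{\alpha+\gamma,\beta+\gamma,\alpha+\delta+T,\beta+\delta\}$, this gives $f\leq M$ on $[0,T]$.

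For the reverse inequality I would split into four cases according to which of the four quantities realizes $M$, and in each case exhibit a $t^*\in[0,T]$ where $f(t^*)=M$. If $M=\alpha+\gamma$, the bounds $M\leq \beta+\gamma$, $M\leq \alpha+\delta+T$, $M\leq \beta+\delta$ translate into $\alpha\leq \beta$, $\gamma-\delta\leq T$, $\gamma-\delta\leq \beta-\alpha$, and then $t^*=\max\{0,\gamma-\delta\}$ lies in $[0,\min\{T,\beta-\alpha\}]$ and makes the two minima in the definition of $f(t^*)$ equal to $\alpha$ and $\gamma-t^*$ respectively, giving $f(t^*)=\alpha+\gamma=M$. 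Symmetrically, when $M=\beta+\delta$ the choice $t^*=\max\{0,\beta-\alpha\}$ works. When $M=\alpha+\delta+T$, the bounds $M\leq \beta+\delta$ and $M\leq \alpha+\gamma$ force $T\leq \beta-\alpha$ and $T\leq \gamma-\delta$, so $t^*=T$ satisfies $f(T)=\alpha+\delta+T=M$. Finally, when $M=\beta+\gamma$, the bounds $M\leq \alpha+\gamma$ and $M\leq \beta+\delta$ yield $\beta\leq \alpha$ and $\gamma\leq \delta$, so $t^*=0$ gives $f(0)=\beta+\gamma=M$.

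No real obstacle is anticipated: the function $f$ is concave and piecewise linear with slopes in $\{-1,0,+1\}$ and has at most the two breakpoints $\beta-\alpha$ and $\gamma-\delta$, so its maximum over $[0,T]$ is necessarily attained at one of the four candidates $0$, $T$, $\max\{0,\beta-\alpha\}$, $\max\{0,\gamma-\delta\}$. The only mild care lies in checking, in each of the four regimes (determined by which of the four expressions realizes the minimum $M$), that the chosen candidate indeed falls in the right monotonicity region of $f$, as performed above.
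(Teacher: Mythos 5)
Your proof is correct. It does, however, organize the argument differently from the paper. The paper's proof computes $f$ explicitly as a concave ``tent'' function on all of $\mathbb{R}$ --- equal to $\alpha+\delta+t$ for $t\leq\min\{\beta-\alpha,\gamma-\delta\}$, constant equal to $\min\{\beta+\delta,\alpha+\gamma\}$ on the middle interval, and equal to $\beta+\gamma-t$ beyond $\max\{\beta-\alpha,\gamma-\delta\}$ --- and then reads off $\max_{[0,T]}f$ by locating the plateau relative to $[0,T]$, obtaining the stated minimum of four terms after a short simplification. You instead prove the two inequalities separately: the upper bound follows from cross-matching the two term-wise estimates on each $\min$ (a clean observation that the paper never states explicitly and which makes it transparent why exactly those four quantities appear), and the lower bound is obtained by a case analysis on which of the four quantities realizes $M$, exhibiting in each case an explicit maximizer $t^*\in\{0,\,T,\,\max\{0,\beta-\alpha\},\,\max\{0,\gamma-\delta\}\}$ and verifying that both minima resolve the right way there. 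I checked all four cases and the membership/monotonicity verifications; they are all sound. The trade-off is that the paper's route yields the full graph of $f$ (slightly more information) in one computation, while yours isolates the upper bound as a triviality and confines the case analysis to the attainment step, which is arguably easier to audit.
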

\begin{proof}
For $t \in \mathbb{R}$ we can write
$$f(t)=\left \{ \begin{array}{ll} \alpha+\delta+t& \hbox{if } t \leq \min\{\beta-\alpha,\gamma-\delta \}\\
\min\{\beta+\delta,\alpha+\gamma\} & \hbox{if }\min\{\beta-\alpha,\gamma-\delta \}\leq t  \leq \max\{\beta-\alpha,\gamma-\delta \}\\
\beta+\gamma-t &\hbox{if }t  \geq \max\{\beta-\alpha,\gamma-\delta \}, \end{array} \right. $$
yielding 
\begin{eqnarray*} \max_{[0,T]}f&=& \left\{ \begin{array}{ll}  \min \{\alpha+\delta+T,\beta+\delta,\alpha+\gamma\} &\hbox{if }\min\{\beta-\alpha,\gamma-\delta\}\geq 0\\
\min\{\beta+\delta,\alpha+\gamma\} & \hbox{if }\min\{\beta-\alpha,\gamma-\delta \}\leq 0  \leq \max\{\beta-\alpha,\gamma-\delta \}\\
\beta+\gamma & \hbox{if } \max\{\beta-\alpha,\gamma-\delta \}\leq 0 \end{array} \right.\\
&=&\min \{\alpha+\gamma,\beta+\gamma,\alpha+\delta+T,\beta +\delta\}
\end{eqnarray*}
as claimed.
\end{proof}
\medskip  Let us fix $2\leq  k\leq\frac\ell2$ and  consider the  numbers $c_k, d_k, f_k, g_k$ defined in the introduction. 
We get \beq\label{uno}c_2=a_2+a_4,\ d_2=a_3+a_4,\ f_2=a_2+\min\{a_3,a_4\}+a_5,\ g_2=a_3+a_5.\eeq
\begin{lemma} The following identities hold:
\begin{equation} \label{2245}
N=\max_{N_1 \in J_1}\Big( \min \{ c_{\frac{\ell}{2}},d_{\frac{\ell}{2}}+N_1,f_{\frac{\ell}{2}}- N_1,g_{\frac{\ell}{2}} \} \Big)
\end{equation}
when $\ell$ is even and
\begin{equation}\label{2246}
N=\max_{N_1 \in J_1, N_\ell \in J_\ell\atop N_1+N_\ell \leq a_1}  \Big( \min \{ c_{\frac{\ell-1}{2}}+N_\ell,d_{\frac{\ell-1}{2}}+N_1+N_\ell,f_{\frac{\ell-1}{2}},g_{\frac{\ell-1}{2}}+N_1\} \Big) 
\end{equation}
when $\ell$ is odd. 
\end{lemma}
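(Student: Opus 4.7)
The strategy is to prove both identities by induction on $k$, eliminating the inner variables $N_3, N_5, \ldots, N_{2k-1}$ one at a time via Lemma \ref{1150}. For the even case $\ell = 2k$, I would write the objective as $N_1 + T_1 + \dots + T_k$ with $T_j = \min\{a_{2j}-N_{2j-1}, a_{2j+1}-N_{2j+1}\} + N_{2j+1}$ for $j < k$ and $T_k = \min\{a_{2k}-N_{2k-1}, a_1-N_1\}$. First I would merge the initial $N_1$ with $T_1$ via the identity $N_1 + \min\{a_2-N_1, a_3-N_3\} = \min\{a_2, a_3 + N_1 - N_3\}$, so that in what follows the residual $N_1$-dependence is carried by at most one arm (with coefficient $+1$) on one side of the chain, and by the term $a_1 - N_1$ inside $T_k$ on the other side.

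Next I would apply Lemma \ref{1150} to $T_{k-1}+T_k$ as a function of $N_{2k-1}$ (with $\alpha = a_{2k-2}-N_{2k-3}$, $\beta = a_{2k-1}$, $\gamma = a_{2k}$, $\delta = a_1 - N_1$, $T = \min\{a_{2k-1}, a_{2k}\}$), obtaining a $\min$ of four affine expressions. Grouping the resulting arms by their $N_{2k-3}$-coefficient (always $0$ or $-1$) restores the shape of the left-hand side of Lemma \ref{1150} when combined with $T_{k-2}$, so the procedure can be iterated. The induction claim is that after $k-1$ such steps the partial maximum over $N_3, N_5, \ldots, N_{2k-1}$ takes the form $\min\{c_k, d_k+N_1, f_k-N_1, g_k\}$, with the four arms corresponding to the four choices ($1\in J$ vs.\ $1\notin J$) $\times$ ($k\in J$ vs.\ $k\notin J$). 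The $\chi_J$-combinatorics in the definition of $s_k(J)$ is engineered precisely so that the constants emerging from the iterated applications of Lemma \ref{1150} match the explicit formula, and the outer $\max$ over $N_1 \in J_1$ then yields \eqref{2245}. The odd case \eqref{2246} proceeds identically, with the only modification that $N_\ell$ enters as a second outer free variable subject to the wrap-around constraint $N_1 + N_\ell \le a_1$, and is not coupled back to $N_1$ inside any $\min$ term.

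I expect the main obstacle to be the $\chi_J$ bookkeeping itself. Each application of Lemma \ref{1150} at step $j$ forces a binary choice between an ``inner'' branch (contributing $a_{2j}$ to the accumulated sum, corresponding to $j \in J$) and an ``outer'' branch (contributing $a_{2j+1}$, corresponding to $j \notin J$); when the outer branch is selected, the choice made at the previous step dictates whether the correction term $\min\{a_{2j-1}, a_{2j}\}$ is added, which is exactly what the cross-factor $\chi_J(j-1)$ in the definition of $s_k(J)$ records. Verifying this correspondence term-by-term, and keeping track of the interaction between the $1\in J$ and $k\in J$ boundary choices at the two ends of the chain, is the delicate piece of the argument; once that bookkeeping is in place, the two identities follow by assembling the pieces and taking the outer maxima.
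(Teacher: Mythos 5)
Your proposal is correct and follows essentially the same route as the paper: the paper likewise proves the lemma by an induction that collapses the alternating min-chain one interior odd-indexed variable at a time via Lemma \ref{1150}, tracking a min of four affine arms indexed by the dependence on the two boundary variables, and identifies the accumulated constants with $c_k,d_k,f_k,g_k$ through recurrences read off from the definition of $s_k(J)$ (its \eqref{1229}). The only cosmetic difference is that you peel the chain from the right end while the paper extends it from the left; the bookkeeping, including the origin of the $\chi_J(j-1)\min\{a_{2j-1},a_{2j}\}$ cross-term in the $T$-arm of Lemma \ref{1150}, is the mirror image of the paper's.
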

\begin{proof}
We claim that for every $2\leq k\leq \frac\ell2$ we have 
\begin{equation} \label{1229}\begin{aligned}
& \max_{N_i\in J_i\,  i=3,\dots ,2k-1\,\hbox{\tiny odd}}\Big(\! \min\{a_2-N_1,a_3-N_3\}+N_3+\dots +\min \{a_{2k}-N_{2k-1},a_{2k+1}-N_{2k+1}\}\!\Big)  \\
&= \min \{ c_k-N_1,d_k,f_k-N_1- N_{2k+1},g_k-N_{2k+1} \}. \end{aligned}
\end{equation}
Indeed, \eqref{1229} is valid for $k=2$ owing to \eqref{uno}, and the validity of \eqref{1229} with index $k$ implies
$$\begin{aligned}
& \max_{N_i\in J_i\,  i=3,\dots ,2k+1\,\hbox{\tiny odd}} \Big( \min\{a_2-N_1,a_3-N_3\}+N_3+\dots+\min \{a_{2k+2}-N_{2k+1},a_{2k+3}-N_{2k+3} \} \Big)\\
&=\max_{N_{2k+1} \in J_{2k+1} }\Big(  \min \{ \alpha_k,\beta_k-N_{2k+1}\}+N_{2k+1}+\min \{a_{2k+2}-N_{2k+1},a_{2k+3}-N_{2k+3}\}\Big),
\end{aligned}$$
where $\alpha_k=\min \{ c_k-N_1,d_k\}$ and $\beta_k=\min \{ f_k-N_1,g_k\}$. By Lemma \ref{1150} we have that
$$\begin{aligned}
&\max_{N_{2k+1} \in J_{2k+1}}\Big( \min\{ \alpha_k,\beta_k-N_{2k+1}\}+N_{2k+1}+\min\{a_{2k+2}-N_{2k+1},a_{2k+3}-N_{2k+3}\}\Big)\\
&=\min \{ \alpha_k+a_{2k+2},  \beta_k+a_{2k+2}, \alpha_k+\min\{a_{2k+1},a_{2k+2}\}+a_{2k+3}-N_{2k+3}, \beta_k+a_{2k+3}-N_{2k+3} \}.
\end{aligned}$$
The validity of \eqref{1229} with index $k+1$ is now achieved through the identities
\begin{eqnarray*}
c_{k+1}&=&\min\{c_k+a_{2k+2},f_k+a_{2k+2}\},\qquad  d_{k+1}=\min\{d_k+a_{2k+2},g_k+a_{2k+2}\},\\
f_{k+1}&=&\min\{c_k+\min\{a_{2k+1},a_{2k+2}\}+a_{2k+3},f_k+a_{2k+3}\},\\
g_{k+1}&=& \min\{d_k+\min\{a_{2k+1},a_{2k+2}\}+a_{2k+3},g_k+a_{2k+3}\},
\end{eqnarray*}
which follow by direct inspection of the definition of numbers $c_k,d_k, f_k, g_k$. Finally, by \eqref{1229} we immediately get the thesis of the Lemma.
\end{proof}
\begin{lemma} \label{ineq} The following inequalities  hold:
\begin{itemize}
\item[\rm(a)] $c_k+g_k\leq d_k+f_k$ for all $2\leq k\leq \frac\ell2$;
\item[\rm(b)] $\min\{ c_{\frac{\ell}{2}},g_{\frac{\ell}{2}} \}\leq d_{\frac{\ell}{2}}+\min \{a_1,a_2\}$ when $\ell$ is even.
\end{itemize}
\end{lemma}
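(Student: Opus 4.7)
My plan is to prove (a) by induction on $k\geq 2$ and (b) by a direct comparison argument exploiting an optimizer of $d_{\frac{\ell}{2}}$. For part (a), the base case $k=2$ follows immediately from the explicit values in \eqref{uno}, which give $d_2+f_2-c_2-g_2=\min\{a_3,a_4\}\geq 0$. For the inductive step I would invoke the four recursions already recorded at the end of the proof of the previous Lemma, namely, with $m:=\min\{a_{2k+1},a_{2k+2}\}\geq 0$,
$$c_{k+1}=\min\{c_k,f_k\}+a_{2k+2},\qquad f_{k+1}=\min\{c_k+m,f_k\}+a_{2k+3},$$
together with the analogous identities for $d_{k+1}$ and $g_{k+1}$ obtained by replacing $(c_k,f_k)$ with $(d_k,g_k)$. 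The desired inequality $c_{k+1}+g_{k+1}\leq d_{k+1}+f_{k+1}$ then rewrites as
$$\min\{c_k+m,f_k\}-\min\{c_k,f_k\}\geq \min\{d_k+m,g_k\}-\min\{d_k,g_k\}.$$
Since $\min\{x+m,y\}-\min\{x,y\}=\min\{m,(y-x)^+\}$ whenever $m\geq 0$, this is equivalent to $(f_k-c_k)^+\geq (g_k-d_k)^+$, which follows at once from the inductive hypothesis $c_k+g_k\leq d_k+f_k$ by monotonicity of the positive part.

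For part (b) I would exhibit competitors for $c_{\frac{\ell}{2}}$ and $g_{\frac{\ell}{2}}$ starting from a minimizer $J_d$ of $d_{\frac{\ell}{2}}$, which by definition satisfies $1\notin J_d$ and $\frac{\ell}{2}\in J_d$. Set $J_c:=J_d\cup\{1\}$ (admissible for $c_{\frac{\ell}{2}}$) and $J_g:=J_d\setminus\{\frac{\ell}{2}\}$ (admissible for $g_{\frac{\ell}{2}}$). Inspection of the definition of $s_{\frac{\ell}{2}}$ shows that $\chi_J(1)$ enters only in the $j=2$ summand and only when $2\notin J$, so $s_{\frac{\ell}{2}}(J_c)-s_{\frac{\ell}{2}}(J_d)\in\{0,\min\{a_3,a_4\}\}$. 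Similarly, flipping $\chi_J(\frac{\ell}{2})$ from $1$ to $0$ affects only the $j=\frac{\ell}{2}$ summand, replacing $a_\ell$ by $a_1+\chi_{J_d}(\frac{\ell}{2}-1)\min\{a_{\ell-1},a_\ell\}$ (with the cyclic convention $a_{\ell+1}=a_1$). Using $\min\{a_3,a_4\}\leq a_3$ and $\chi_{J_d}(\frac{\ell}{2}-1)\min\{a_{\ell-1},a_\ell\}\leq a_\ell$, both estimates
$$c_{\frac{\ell}{2}}\leq d_{\frac{\ell}{2}}+a_2,\qquad g_{\frac{\ell}{2}}\leq d_{\frac{\ell}{2}}+a_1$$
follow, and taking the minimum yields (b).

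The main subtlety, and the point where a naive approach breaks down, is the inductive step of (a): one is tempted to transplant an optimal pair $(J_d,J_f)$ to candidate sets $(J_c,J_g)$ by toggling the membership of $1$ and $\frac{\ell}{2}$, but this swap carries an uncontrolled error of magnitude $\min\{a_{2k-1},a_{2k}\}$ whose sign depends on $\chi_{J_d}(k-1)-\chi_{J_f}(k-1)$ and is unfavorable in roughly half the cases. Working at the level of the recursions instead, and distilling the inductive step to the elementary identity $\min\{x+m,y\}-\min\{x,y\}=\min\{m,(y-x)^+\}$, is what bypasses the combinatorial bookkeeping and renders the monotonicity transparent.
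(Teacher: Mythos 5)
Your proof is correct and follows essentially the same route as the paper: part (a) by induction on $k$ using the four recursions for $c_{k+1},d_{k+1},f_{k+1},g_{k+1}$ (your identity $\min\{x+m,y\}-\min\{x,y\}=\min\{m,(y-x)^+\}$ is just a cleaner packaging of the paper's four-case comparison, though note the step is an implication from $(f_k-c_k)^+\geq(g_k-d_k)^+$ rather than an equivalence), and part (b) by exactly the paper's competitors $J_d\cup\{1\}$ and $J_d\setminus\{\frac{\ell}{2}\}$ together with the observation of where the indices $1$ and $\frac{\ell}{2}$ enter $s_{\frac{\ell}{2}}$.
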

\begin{proof} The inequality in $(a)$ does hold for $k=2$ thanks to \eqref{uno} and its validity at step $k$ implies that
$$\begin{aligned}
c_{k+1}+g_{k+1}&=\min\{c_k+a_{2k+2},f_k+a_{2k+2}\}+
\min\{d_k+\min\{a_{2k+1},a_{2k+2}\}+a_{2k+3},g_k+a_{2k+3}\}\\
&\leq \left\{ \begin{array}{l} c_k+a_{2k+2}+g_k+a_{2k+3}\leq (d_k+a_{2k+2})+(f_k+a_{2k+3})\\
(d_k+a_{2k+2})+ (c_k+\min\{a_{2k+1},a_{2k+2}\}+a_{2k+3})\\
(g_k+a_{2k+2})+(f_k+a_{2k+3})\\
(g_k+a_{2k+2})+(c_k+\min\{a_{2k+1},a_{2k+2}\}+a_{2k+3})\end{array} \right.,
\end{aligned}$$
yielding 
$$\begin{aligned}c_{k+1}+g_{k+1}&\leq  \min\{d_k+a_{2k+2},g_k+a_{2k+2}\}+
\min\{c_k+\min\{a_{2k+1},a_{2k+2}\}+a_{2k+3},f_k+a_{2k+3}\}\\ &=d_{k+1}+f_{k+1}.\end{aligned}$$
By induction the inequality in (a) is true for all $k \geq 2$, which implies the validity of
\begin{eqnarray*}
&&\min\Big\{c_{\frac{\ell}{2}}-d_{\frac{\ell}{2}},g_{\frac{\ell}{2}}-d_{\frac{\ell}{2}}, \frac{1}{2}\Big(f_{\frac{\ell}{2}}-d_{\frac{\ell}{2}}\Big)   \Big\}=\min\{c_{\frac{\ell}{2}},g_{\frac{\ell}{2}}\}-d_{\frac{\ell}{2}} \\
&&
\max\Big\{f_{\frac{\ell}{2}}-\min\{c_{\frac{\ell}{2}},g_{\frac{\ell}{2}}\},\frac{1}{2}\Big(f_{\frac{\ell}{2}}-d_{\frac{\ell}{2}}\Big)  \Big\}=f_{\frac{\ell}{2}}-\min\{c_{\frac{\ell}{2}},g_{\frac{\ell}{2}}\}
\end{eqnarray*}
for $\ell$ even, in view of $2\min \{c_{\frac{\ell}{2}},g_{\frac{\ell}{2}}\} \leq f_{\frac{\ell}{2}}+d_{\frac{\ell}{2}}$. Concerning $(b)$ notice that
\begin{equation} 
 s_k(J)=(1-\chi_J(2))\chi_J(1) \min\{a_3,a_4\}+p_k(J^\#)\label{2326},\eeq
\beq s_k(J)=a_{2k} \chi_J(k)+(1-\chi_J(k))[a_{2k+1}+\chi_J(k-1) \min\{a_{2k-1},a_{2k}\}]+q_k(J_\#)\label{2335}
\end{equation}
for some functions $p_k,q_k$ and $J^\#=J \cap \{2,\dots,k\}$, $J_\#=J \cap \{1,\dots,k-1 \}$. Given $J \subset \{1,\dots,\frac{\ell}{2} \}$ so that $1\notin J$, $\frac{\ell}{2} \in J$, for $\hat J=\{1\} \cup J$ by \eqref{2326} we have that
\begin{eqnarray*}
c_{\frac{\ell}{2}} &\leq& a_2+s_{\frac{\ell}{2}}(\hat J)=(a_2-a_3+(1-\chi_{\hat J}(2)) \min \{a_3,a_4\})+\big(a_3+p_{\frac{\ell}{2}}({\hat J}^\#)\big) \\
&\leq& (a_2-a_3+\min\{a_3,a_4\})+\big(a_3+p_{\frac{\ell}{2}}(J^\#)\big)
\leq a_2+(a_3+s_{\frac{\ell}{2}}(J))
\end{eqnarray*}
yielding  $c_{\frac{\ell}{2}}\leq a_2+d_{\frac{\ell}{2}}$. Similarly, for $\bar J=J \setminus \{\frac{\ell}{2} \}$ by \eqref{2335} we get that
\begin{eqnarray*}
g_{\frac{\ell}{2}} &\leq& a_3+s_{\frac{\ell}{2}}(\bar J)=\Big(a_1+\chi_{\bar J}\Big(\frac{\ell}{2}-1\Big) \min \{a_{\ell-1},a_\ell\}\Big)+\big(a_3+q_{\frac{\ell}{2}}(\bar J_\#)\big) \\
&\leq& \Big(a_1+\chi_{J}\Big(\frac{\ell}{2}-1\Big) \min \{a_{\ell-1},a_\ell\}-a_\ell\Big)+\big(a_3+s_{\frac{\ell}{2}}(J)\big)
\leq a_1+\big(a_3+s_{\frac{\ell}{2}}(J)\big)
\end{eqnarray*}
providing  $g_{\frac{\ell}{2}}\leq a_1+d_{\frac{\ell}{2}}$. In conclusion, we have shown that
$$\min\{ c_{\frac{\ell}{2}},g_{\frac{\ell}{2}} \}\leq d_{\frac{\ell}{2}}+\min \{a_1,a_2\}$$ 
and $(b)$ is thus established. 

\end{proof}

\noindent{\bf{Proof of Theorem \ref{maximizingN}}}. Thanks to inequalities (a)-(b) of Lemma \ref{ineq}, for $\ell$ even we have that
$$\min \{ c_{\frac{\ell}{2}},d_{\frac{\ell}{2}}+N_1,f_{\frac{\ell}{2}}- N_1,g_{\frac{\ell}{2}}\}
=\left\{ \begin{array}{ll} d_{\frac{\ell}{2}}+N_1& \hbox{if }N_1\leq \min\{c_{\frac{\ell}{2}},g_{\frac{\ell}{2}} \}-d_{\frac{\ell}{2}}\\
\min\{c_{\frac{\ell}{2}},g_{\frac{\ell}{2}} \}& \hbox{if }\min\{c_{\frac{\ell}{2}}, g_{\frac{\ell}{2}} \}-d_{\frac{\ell}{2}} \leq N_1 \leq f_{\frac{\ell}{2}}-\min\{c_{\frac{\ell}{2}},g_{\frac{\ell}{2}}  \}\\
f_{\frac{\ell}{2}}-N_1& \hbox{if } N_1 \geq f_{\frac{\ell}{2}}-\min\{c_{\frac{\ell}{2}},g_{\frac{\ell}{2}}  \}
\end{array} \right.$$
with $\min\{c_{\frac{\ell}{2}}, g_{\frac{\ell}{2}} \}-d_{\frac{\ell}{2}} \leq \min\{a_1,a_2\}$, 
yielding 
\begin{equation} \label{2346}
N=\min \{ c_{\frac{\ell}{2}},g_{\frac{\ell}{2}}\}
\end{equation}
when $\ell$ is even in view of \eqref{2245}. Unfortunately, when $\ell$ is odd the expression of $N$ in \eqref{2246} can be simply reduced to
\begin{equation} \label{0002}
N=\max_{\hat a_1 -\min\{a_1,a_\ell\}\leq  N_1 \leq  \min\{a_1,a_2\}} \Big( \min \{ c_{\frac{\ell-1}{2}}+\hat a_1-N_1,d_{\frac{\ell-1}{2}}+\hat a_1,f_{\frac{\ell-1}{2}},g_{\frac{\ell-1}{2}}+N_1 \} \Big)
\end{equation}
because $N_1+N_\ell \leq \hat a_1:=\min \{a_1,\min\{a_1,a_2\}+\min\{a_1,a_\ell\} \}$ with the equality achieved for all the maximizers in \eqref{2246}.

\medskip  An interesting situation corresponds to the case where the $a_i$'s are ordered in an increasing way. To distinguish it from the general case, we will denote it by $b_1, \dots, b_\ell$. Given $J \subset \{1,\dots,k\}$ 
and $3\leq j \leq k$, $s_k(J)$ depends on $j-1$ only through the term
$$b_{2j-2}\chi_J(j-1) +(1-\chi_J(j-1))[b_{2j-1}+\chi_J(j-2) b_{2j-3}] +(1-\chi_J(j)) \chi_J(j-1) b_{2j-1} $$ 
which is minimized by the choice $j-1 \in J$ or $j-1 \notin J$ depending on whether $j \in J$ or not, respectively. The same holds if $j=2$. Therefore, the minimization in the definition of $c_k$ and $d_k$ is achieved by sets $J$ with $\{2,\dots,k\} \subset J$, yielding 
\begin{eqnarray} \label{1920}
c_k=\sum_{j=1}^k b_{2j},\ d_k=b_3+\sum_{j=1}^k b_{2j},
\end{eqnarray} 
whereas for $f_k$ and $g_k$ the minimizing sets $J$ satisfy $J \cap \{2,\dots,k\}=\emptyset$ and then
\begin{eqnarray} \label{1921}
f_k=b_2 +\sum_{j=1}^k b_{2j+1}, \: g_k=\sum_{j=1}^{k} b_{2j+1}.
\end{eqnarray} 
Since $g_{\frac{\ell}{2}}\leq c_{\frac{\ell}{2}}$ in view of \eqref{1920}-\eqref{1921}, for $\ell$ even \eqref{2346} becomes
\begin{equation} \label{2346bis}
N=g_{\frac{\ell}{2}}=\sum_{j=0}^{\frac{\ell-2}{2}} b_{2j+1}.
\end{equation}
Since $c_{\frac{\ell-1}{2}}\leq d_{\frac{\ell-1}{2}}$ and $g_{\frac{\ell-1}{2}}+b_1\leq f_{\frac{\ell-1}{2}}$ by \eqref{1920}-\eqref{1921}, \eqref{0002} gives that
\begin{eqnarray} 
N&=& \max\{ \min \{ c_{\frac{\ell-1}{2}}+b_1-N_1, g_{\frac{\ell-1}{2}}+N_1 \}: 0 \leq  N_1 \leq  b_1  \}\nonumber \\
&=& \min\Big\{ c_{\frac{\ell-1}{2}}+b_1,  \frac{1}{2}\Big(c_{\frac{\ell-1}{2}}+b_1+g_{\frac{\ell-1}{2}}\Big)\Big\} = \min\Big\{b_1+\sum_{j=1}^{\frac{\ell-1}{2}} b_{2j}, \frac{1}{2} \sum_{j=1}^l b_j\Big\}  \label{0002bis}
\end{eqnarray}
when $\ell$ is odd, in view of 
$$\frac{1}{2}(c_{\frac{\ell-1}{2}}+b_1-g_{\frac{\ell-1}{2}})
\leq \frac{b_1}{2}.$$
\hfill\fbox{}

\bigskip

Finally let us discuss the case $\ell=2,3,4$.  

\medskip \noindent {\fbox{$\ell=2$}} We clearly have that $N=\min \{a_1,a_2\}$.

\medskip  \noindent {\fbox{$\ell=3$}} By \eqref{oddN} we deduce
$$\begin{aligned}N&=\max\{\min\{a_2+N_3, a_3+N_1\}: \ N_1+N_3\leq a_1, 0\leq N_1\leq \min\{a_1,a_2\},\, N_3\leq \min\{a_1,a_3\}\}
\\ &=\max\{\min\{a_2+\hat a_1-N_1, a_3+N_1\}: \ \hat a _1-\min\{a_1,a_3\} \leq N_1\leq \min\{a_1,a_2\}\}
\end{aligned}$$
since  $N_1+N_3 \leq \hat a_1:=\min \{a_1,\min\{a_1,a_2\}+\min\{a_1,a_3\} \}$ with the equality achieved for all the maximizers.
Then, we compute 
$$N=\left\{\begin{array}{llll}
\frac{a_2+a_3+\hat a_1}{2}&\hbox{if}& \hat a_1-\min\{a_1,a_3\} \leq \frac{a_2-a_3+\hat a_1}{2}\leq \min\{a_1,a_2\}\\  a_3+\min\{a_1,a_2\}&\hbox{if} &\frac{a_2-a_3+\hat a_1}{2}\geq \min\{a_1,a_2\}\\ 
a_2+\min\{a_1,a_3\}&\hbox{if} &\frac{a_2-a_3+\hat a_1}{2}\leq \hat a _1-\min\{a_1,a_3\} .
\end{array}\right.$$
By discussing all the six possibilities for $(a_1,a_2,a_3)$ ($a_1<a_2<a_3$, $a_2<a_3<a_1$ and so on), we immediately realize that 
$N=\min\{b_1+b_2,\frac{b_1+b_2+b_3}{2}\}$, which actually corresponds to  \eqref{0002bis} for $\ell=3$ for the increasing ordering $b_1,b_2,b_3$. We  have thus proved that the maximal $N$ in \eqref{max} is independent of the order of $a_i$'s. 
 
\medskip  Instead of coupling the $a_i$'s in a consecutive way (ordered or not), with regards  to   Theorem \ref{thgammauno}  let us now couple the pair $a_1$, $a_2$ with $a_3$ and $a_3$ with $a_1$: in order to satisfy \eqref{192}, we decompose $N$ as $N=N_1+N_2+N_3$ with $$N_1+N_3\leq a_1,\;\;N_1+N_2+N_3\leq a_3,\;\;N_2\leq a_2.$$ We can easily see that the maximal $N$ satisfies
$N=\min\{a_1+a_2,a_3\}$, giving $N \leq \min\{b_1+b_2,\frac{b_1+b_2+b_3}{2}\}$. Since such a case represents the general situation for a non-consecutive coupling, we can summarize our discussion by saying that the consecutive increasing coupling $b_1,b_2,b_3$ gives rise to the best maximal $N=\min\{b_1+b_2,\frac{b_1+b_2+b_3}{2}\}$ among all the possible couplings (consecutive or not, increasing or not). Such a property is peculiar for $\ell=3$, as we will see below by discussing the case $\ell=4$.

\medskip  \noindent {\fbox{$\ell=4$}} By \eqref{2346} we have that
$N=\min \{ a_1+a_3,a_2+a_4\}$, and the optimal choice is $(a_1,a_2,a_3,a_4)=(b_2,b_1,b_3,b_4)$ which gives rise to $N=\min \{ b_2+b_3,b_1+b_4\}$. Since in general $\min \{ b_2+b_3,b_1+b_4\}>b_1+b_3$ (see \eqref{2346bis}), we see that the increasing ordering is no longer the optimal   among all the consecutive ones. Moreover, referring to  non-consecutive couplings in  Theorem \ref{thgammauno}, let us couple the pair $b_1$, $b_2$ with $b_4$, $b_3$ with $b_2$ and $b_4$ with $b_3$:  in order to satisfy \eqref{192} we need to require that $N=N_1+N_2+N_3+N_4$ with 
$$N_1\leq b_1,\: N_2+N_3 \leq b_2,\: N_3+N_4\leq b_3, \: N_1+N_2+N_4\leq b_4.$$ 
The particular choice $N_1=b_1$, $N_2=\min\{b_2, b_4-b_1\}$, $N_3=0$  and $N_4=\min\{b_3, b_4-b_1-\min\{b_2, b_4-b_1\} \}$ leads to $N\geq \min\{b_1+b_2+b_3,b_4\}$. Since $\min \{ b_1+b_2+b_3,b_4\} >\min \{ b_2+b_3,b_1+b_4\}$ when $b_1+b_2+b_3\leq b_4$, we also see that consecutive couplings are not in general the optimal among all the possible ones.

\end{document}